\documentclass[12pt]{amsart}
\usepackage{amssymb}
\usepackage{eucal}
\usepackage[bookmarks=false]{hyperref}
\usepackage{bm}
\usepackage[small,nohug,heads=vee]{diagrams}
\diagramstyle[labelstyle=\scriptstyle]

\allowdisplaybreaks


\setlength{\topmargin}{0truein}
\setlength{\headheight}{.25truein}
\setlength{\headsep}{.25truein}
\setlength{\textheight}{9truein}
\setlength{\footskip}{.25truein}
\setlength{\oddsidemargin}{0truein}
\setlength{\evensidemargin}{0truein}
\setlength{\textwidth}{6.5truein}
\setlength{\voffset}{-0.5truein}
\setlength{\hoffset}{0truein}


\newtheorem{theorem}{Theorem}[section]
\newtheorem{lemma}[theorem]{Lemma}
\newtheorem{proposition}[theorem]{Proposition}
\newtheorem{corollary}[theorem]{Corollary}

\newtheorem{tablesymb}[theorem]{Table of Symbols}
\newtheorem*{theorem:ExpCoef}{Theorem~\ref{T:ExpCoef}}
\newtheorem*{corollary:ExpCor}{Corollary~\ref{C:Expcorallary}}
\newtheorem*{theorem:LogCoef}{Theorem~\ref{T:LogCoef}}
\newtheorem*{corollary:LogCor}{Corollary~\ref{C:LogCoefbottomrow}}
\newtheorem*{theorem:ZetaValues}{Theorem~\ref{T:ZetaValues}}
\newtheorem*{corollary:ZetaCor}{Corollary~\ref{C:ZetaValues}}
\newtheorem*{theorem:Transcendence}{Theorem~\ref{T:Transcendence}}
\newtheorem*{corollary:TranscedenceCor}{Corollary~\ref{C:Transcendence}}

\theoremstyle{definition}

\newtheorem{example}[theorem]{Example}

\theoremstyle{remark}
\newtheorem{remark}[theorem]{Remark}



\newcommand{\G}{\ensuremath \mathbb{G}}

\newcommand{\C}{\ensuremath \mathbb{C}}
\newcommand{\Z}{\ensuremath \mathbb{Z}}

\newcommand{\BB}{\mathbb{B}}

\newcommand{\F}{\ensuremath \mathbb{F}}

\newcommand{\TT}{\mathbb{T}}

\newcommand{\bA}{\mathbf{A}}
\newcommand{\bH}{\mathbf{H}}
\newcommand{\bK}{\mathbf{K}}
\newcommand{\ba}{\mathbf{a}}
\newcommand{\bb}{\mathbf{b}}

\newcommand{\bd}{\mathbf{d}}
\newcommand{\bu}{\mathbf{u}}

\newcommand{\bg}{\mathbf{g}}
\newcommand{\bh}{\mathbf{h}}

\newcommand{\bz}{\mathbf{z}}

\newcommand{\bw}{\mathbf{w}}
\newcommand{\bk}{\mathbf{k}}
\newcommand{\bp}{\mathbf{p}}
\newcommand{\balpha}{\bm{\alpha}}
\newcommand{\bgamma}{\bm{\gamma}}

\newcommand{\cE}{\mathcal{E}}
\newcommand{\cF}{\mathcal{F}}
\newcommand{\cG}{\mathcal{G}}

\newcommand{\cO}{\mathcal{O}}

\newcommand{\cR}{\mathcal{R}}

\newcommand{\cU}{\mathcal{U}}
\newcommand{\cZ}{\mathcal{Z}}
\newcommand{\cM}{\mathcal{M}}

\newcommand{\tcG}{\widetilde{\mathcal{G}}}
\newcommand{\tcZ}{\widetilde{\mathcal{Z}}}

\newcommand{\fa}{\mathfrak{a}}

\newcommand{\fp}{\mathfrak{p}}

\newcommand{\isom}{\ensuremath \cong}
\newcommand{\inv}{\ensuremath ^{-1}}

\newcommand{\on}{\ensuremath ^{\otimes n}}

\DeclareMathOperator{\Span}{Span}

\DeclareMathOperator{\Mat}{Mat}
\DeclareMathOperator{\divisor}{div}
\DeclareMathOperator{\End}{End}
\DeclareMathOperator{\Exp}{Exp}
\DeclareMathOperator{\Log}{Log}

\DeclareMathOperator{\Fr}{Fr}
\DeclareMathOperator{\Gal}{Gal}
\DeclareMathOperator{\Hom}{Hom}

\DeclareMathOperator{\ord}{ord}

\DeclareMathOperator{\Res}{Res}
\DeclareMathOperator{\RES}{RES}
\DeclareMathOperator{\sgn}{sgn}
\DeclareMathOperator{\Spec}{Spec}

\newcommand{\twist}{^{(1)}}

\newcommand{\twistinv}{^{(-1)}}
\newcommand{\twisti}{^{(i)}}
\newcommand{\twistk}[1]{^{(#1)}}

\newcommand{\oa}{\overline{a}}

\newcommand{\ob}{\overline{b}}

\newcommand{\ophi}{\overline{\phi}}

\newcommand{\power}[2]{{#1 [[ #2 ]]}}

\newcommand{\pd}{\partial}

\newcommand{\tlambda}{\widetilde{\lambda}}

\newcommand{\tpi}{\widetilde{\pi}}

\newcommand{\tsgn}{\widetilde{\sgn}}

\def\XXint#1#2#3{{\setbox0=\hbox{$#1{#2#3}{\int}$ }
\vcenter{\hbox{$#2#3$ }}\kern-.6\wd0}}

\begin{document}

\title[Drinfeld Module Zeta Values]{Special Zeta Values using Tensor Powers of Drinfeld Modules}
\author{Nathan Green}
\address{Department of Mathematics, Texas A{\&}M University, College Station,
TX 77843, USA}
\email{jaicouru@gmail.com}

\subjclass[2010]{}

\date{}

\thanks{This project was partially supported by NSF Grant DMS-1501362.}

\begin{abstract}
We study tensor powers of rank 1 sign-normalized Drinfeld $\mathbf{A}$-modules, where $\mathbf{A}$ is the coordinate ring of an elliptic curve over a finite field.  Using the theory of vector valued Anderson generating functions, we give formulas for the coefficients of the logarithm and exponential functions associated to these $\mathbf{A}$-modules.  We then show that there exists a vector whose bottom coordinate contains a Goss zeta value, whose evaluation under the exponential function is defined over the Hilbert class field.  This allows us to prove the transcendence of Goss zeta values and periods of Drinfeld modules as well as the transcendence of certain ratios of those quantities.
\end{abstract}

\keywords{}
\date{\today}
\maketitle

\tableofcontents

\section{Introduction} \label{S:Intro}
The Carlitz module has been studied extensively since Carlitz introduced it (see \cite{Carlitz35}) and we have explicit formulas for many objects related to its arithmetic.  In particular, we have formulas for the coefficients of the exponential and logarithm functions and many formulas for special values of its zeta function and $L$-functions.  Many years after Carlitz's original work, Drinfeld introduced the notion of Drinfeld modules which serve as important generalizations of the Carlitz module (see \cite{Goss} for a thorough account).  Since their introduction, much work has been done to develop an explicit theory for Drinfeld modules which parallels that for the Carlitz module, notably work by Anderson in \cite{And94} and \cite{And96}, Thakur in \cite{Thakur92} and \cite{Thakur93}, Dummit and Hayes in \cite{DummitHayes94}, and Hayes in \cite{Hayes79}.

Further generalizing the Carlitz module, Anderson introduced $n$th tensor powers of Drinfeld modules in \cite{And86}, which serve as $n$-dimensional analogues of (1-dimensional) Drinfeld modules (see \cite{BP16} for a thorough introduction to tensor powers of Drinfeld modules).  In their remarkable paper \cite{AndThak90}, Anderson and Thakur develop much of the explicit theory for the arithmetic of the $n$th tensor power of the Carlitz module, including recursive formulas for the coefficients of the exponential and logarithm functions.  Notably, their techniques allow them to connect special evaluations of the logarithm function to function field zeta values, which are defined in the case of the Carlitz module to be
\[\zeta(s) = \sum_{\substack{a\in \F_q[\theta] \\ a\text{ monic}}} \frac{1}{a^s}, \quad s\in \Z,\]
where $q$ is a prime power.  They find (\cite[Thm. 3.8.3]{AndThak90}) for $1\leq s\leq q -1$ (as a special case) that
\[\Log\on_C \left (\begin{matrix}
0  \\
\vdots \\
0  \\
1
\end{matrix}\right )
=
\left (\begin{matrix}
*  \\
\vdots \\
*  \\
\zeta(s)
\end{matrix}\right ),\]
where $\Log\on_C$ is the logarithm function associated to the $n$th tensor power of the Carlitz module.  In recent years, there has been a surge of work using Drinfeld modules to study zeta functions, $L$-functions, and their special values over functions fields (see \cite{AnglesNgoDacRibeiro16a}-\cite{AnglesSimon}, \cite{LutesThesis}, \cite{LP13}, \cite{Pellarin08}-\cite{Perkins14b}, \cite{Taelman12}).

To state the results of the present paper, we give a short review of rank 1 sign-normalized Drinfeld modules over affine coordinate rings of elliptic curves $E/\F_q$.  Let $\bA = \F_q[t,y]$, where $t$ and $y$ satisfy a cubic Weierstrass equation for $E$.  Define an isomorphic copy of $\bA$ with variables $\theta$ and $\eta$ satisfying the same cubic Weierstrass equation for $E$, which we denote as $A = \F_q[\theta,\eta]$.  Let $K$ be the fraction field of $A$, let $K_\infty$ be the completion of $K$ at the infinite place, an let $\C_\infty$ be the completion of an algebraic closure of $K_\infty$.  Denote $H$ as the Hilbert class field of $K$, which we can take to be a subfield of $K_\infty$.  For an algebraically closed field $L$, if we let $L[\tau]$ denote the ring of twisted polynomials in the $q$th power Frobenius endomorphism~$\tau$, then a rank 1 sign-normalized Drinfeld module is an $\bA$-module homomorphism
\[\rho:\bA\to L[\tau]\]
which satisfies certain naturally defined conditions (see \S \ref{S:Review}).   There is a point $V := (\alpha,\beta)\in E(H)$ associated to $\rho$ called the Drinfeld divisor, satisfying the equation on $E$
\begin{equation}\label{Veq1}
V\twist - V + \Xi = \infty,
\end{equation}
where $\Xi = (\theta,\eta)\in K(E)$ and $V^{(1)} = (\alpha^q,\beta^q)$.  Using the norm defined in \S \ref{S:Background}, we require that $\lvert \alpha\rvert,\lvert\beta\rvert >0$ so that $V$ is uniquely determined.  By \eqref{Veq1}, there exists a function $f\in H(t,y)$ with divisor
\[\divisor(f) = (V\twist) - (V) + (\Xi) - (\infty),\]
with suitable normalization, called the shtuka function.

In this paper we continue the study of tensor powers of rank 1 sign-normalized Drinfeld $\bA$-modules which was commenced by the author in \cite{GreenPeriods17}.  These tensor powers provide a further generalization of the Carlitz module and are examples of Anderson $\bA$-modules.  If we define $\Mat_n(L)[\tau]$ to be the ring of twisted polynomials in the $q$th power Frobenius endomorphism~$\tau$, which we extend to matrices entry-wise, then an $n$-dimensional Anderson $\bA$-module is an $\bA$-module homomorphism
\[\rho:\bA \to \Mat_n(L)[\tau]\]
satisfying certain naturally defined conditions (see \S \ref{S:Review}).  We will denote the $n$th tensor power of the Drinfeld module $\rho$ as $\rho\on$ and denote the exponential and logarithm functions connected with it as $\Exp_\rho\on$ and $\Log_\rho\on$ respectively, noting that both functions can be represented as power series in $\power{\Mat_n(H)}{\bz}$ for $\bz \in \C_\infty^n$.  The construction and basic properties of tensor powers of rank 1 Drinfeld modules are studied by the author in \cite{GreenPeriods17} and we will refer frequently to results from it for our present considerations.  In an effort to make the present paper as self-contained as possible, we recall the necessary facts from \cite{GreenPeriods17} in \S \ref{S:Review}.

The main theorems of the present paper give explicit formulas for the coefficients of the exponential and the logarithm function associated to tensor powers of rank 1 sign-normalized Drinfeld modules, show that evaluating the exponential function at a special vector with a zeta value in its bottom coordinate gives a vector in $H^n$.  As an application of the main theorems we use techniques of Yu from \cite{Yu97} to show that these zeta values and the periods connected to the Drinfeld module are transcendental.  This generalizes both the work of Thakur on Drinfeld modules and zeta values in \cite{Thakur93} as well as that of Anderson and Thakur on tensor powers of the Carlitz module in \cite{AndThak90}.  The methods which Anderson and Thakur apply to obtain formulas for the coefficients for the exponential and logarithm functions for tensor powers of the Carlitz module involve recursive matrix calculations, which allow them to analyze a particular coordinate of those coefficients.  In the case of tensor powers of Drinfeld modules, however, the matrices involved are much more complicated and do not give clean formulas as they do in the Carlitz case.  We develop new techniques to analyze the coefficients of the logarithm and exponential function inspired partially by work of Papanikolas and the author in \cite{GP16} and partially by ideas of Sinha in \cite{Sinha97}.  Further, Anderson and Thakur use special polynomials (called Anderson-Thakur polynomials) in \cite{AndThak90} to relate evaluations of the logarithm function to zeta values.  It is not yet clear how to generalize these Anderson-Thakur polynomials to tensor powers of Drinfeld modules, and so instead we use a generalization of techniques developed by Papanikolas and the author in \cite{GP16} to prove formulas for zeta values.  We comment that this technique allows us to study zeta values only for $1\leq s\leq q-1$; developing techniques to study zeta values for all $n\geq 1$ is a topic of ongoing study (see Remark \ref{R:largezetavalues}).

We begin by setting out the notation and background in \S \ref{S:Background}, then in \S \ref{S:Review} we give a brief review of the theory of tensor powers of rank 1 Drinfeld $\bA$-modules and vector-valued Anderson generating functions as laid out in \cite{GreenPeriods17}.  In particular, for a fixed dimension $n\geq 1$ we recall the functions $g_i,h_i\in H(t,y)$ for $1\leq i\leq n$ which form convenient bases for the $\bA$-motives $M$ and $N$ defined in \cite[\S3]{GreenPeriods17} and recount some of their properties (see Proposition \ref{P:review}).  In section \S \ref{S:ExpCoef} we move on to analyzing the coefficients of the exponential function $\Exp_\rho\on$ associated to tensor powers of rank 1 Drinfeld $\bA$-modules.  Define the Frobenius twisting automorphism for $g = \sum c_{j,k} t^jy^k\in  L[t,y]$ to be
\begin{equation}\label{twistdef}
g\twist = \sum c_{j,k}^q t^jy^k,
\end{equation}
and let $g\twisti$ denote the $i$th iteration of twisting.  First, we define functions for $1\leq \ell \leq n$ and $i\geq 1$
\[\gamma_{i,\ell} = g_\ell/(ff\twist \dots f\twistk{i-1})^n\]
and find that there is a unique expression for $\gamma_{i,\ell}$ of the form
\[\gamma_{i,\ell} = c_{\ell,1}g_1\twisti +c_{\ell,2}g_2\twisti + \dots c_{\ell,n}g_n\twisti +  \sum_{j,k} d_{j,k} \alpha_{j,k},\]
for $c_{\ell,m},d_{j,k}\in H$, where the functions $\alpha_{j,k}\in (t,y)$ satisfy naturally defined conditions given in \S \ref{S:ExpCoef}.  We denote $C_i = \langle c_{j,k}\rangle$, and we obtain our first main theorem about the coefficients of the exponential function.

\begin{theorem:ExpCoef}\label{T:ExpCoef}
For dimension $n\geq 2$ and $\bz\in \C_\infty$, if we write
\[\Exp_\rho\on(\bz) = \sum_{i=0}^\infty Q_i \bz\twisti,\]
then for $i\geq 0$, the exponential coefficients $Q_i = C_i$ and $Q_i\in \Mat_n(H)$.
\end{theorem:ExpCoef}
We prove this theorem by observing a recursive matrix equation which uniquely identifies the coefficients of the exponential function (see Lemma \ref{L:Recurrencelemma}), and then proving that the matrices $C_i$ satisfy the recursive equation.  After a bit more analysis, we obtain more exact formulas for the first column of $Q_i$.

\begin{corollary:ExpCor}\label{C:Expcorallary}
For $z\in \C_\infty$ we have the expression
\[\Exp_\rho\on
\left (\begin{matrix}
z\\
0 \\
\vdots\\
 0
\end{matrix}\right )=
\left (\begin{matrix}
z\\
0 \\
\vdots\\
 0
\end{matrix}\right )+
\sum_{i=0}^\infty 
\frac{z^{q^i}}{g_1\twisti(ff\twist\dots f\twistk{i-1})^n}\cdot 
\left (\begin{matrix}
g_1\\
g_2 \\
\vdots\\
g_n
\end{matrix}\right )\Bigg|_{\Xi\twisti}.\]
\end{corollary:ExpCor}

Next, we transition to studying the coefficients of the logarithm function in \S \ref{S:LogCoef}.  Our main technique in this section involves proving the commutativity of diagram \eqref{maindiagram}, which is inspired by work of Sinha in \cite{Sinha97}.  We then define a single variable function which, using the machinery from the diagram, allows us to recover the logarithm function.  This gives formulas for the logarithm coefficients in terms of residues of quotients of the functions $g_i$, $h_i$ and $f$.

\begin{theorem:LogCoef}\label{T:LogCoef}
For $\bz$ inside the radius of convegence of $\Log_\rho\on$, if we let
\[
\Log_\rho\on(\bz) = \sum_{i=0}^\infty P_i \bz\twisti
\]
for $n\geq 2$ and let $\lambda$ be the invariant differential on $E$, then $P_i \in \Mat_n(H)$ for $i\geq 0$ and
\[
P_i = \left \langle \Res_\Xi\left (\frac{g_j h_{n-k+1}\twisti}{(ff\twist \dots f\twisti)^n}\lambda\right )\right  \rangle_{1\leq j,k\leq n}.
\]
\end{theorem:LogCoef}

With a little further analysis we obtain cleaner formulas formulas for the bottom row of the logarithm coefficients.

\begin{corollary:LogCor}\label{C:LogCoefbottomrow}
For the coefficients $P_i$ of the function $\Log_\rho\on$, the bottom row of $P_i$, for $i\geq 0$, can be written as
\[
\left \langle \frac{h_{n-k+1}\twisti}{h_1(f\twist \dots f\twisti)^n}\bigg|_\Xi \right \rangle_{1\leq k\leq n}.
\]
\end{corollary:LogCor}

In section \S \ref{S:ZetaValues} we show that evaluating the exponential function at a special vector with a Goss zeta value in its bottom coordinate is in $H^n$.  To state our results, we recall the extension of a rank 1 sign-normalized Drinfeld module $\rho$ to integral ideals $\fa\subset A$ due to Hayes~\cite{Hayes79} (see \S\ref{S:ZetaValues}), which maps $\fa \mapsto \rho_\fa \in H[\tau]$.  We define $\partial(\rho_\fa)$ to be the constant term of $\rho_\fa$ with respect to $\tau$ and let $\phi_{\fa} \in \Gal(H/K)$ denote the Artin automorphism associated to $\fa$, and let the $B$ be the integral closure of $A$ in $H$.  We define a zeta function associated to $\rho$ twisted by the parameter $b\in B$ to be
\[
\zeta_\rho(b;s) := \sum_{\fa \subseteq A} \frac{b^{\phi_\fa}}{\pd(\rho_\fa)^s}.
\]
\begin{theorem:ZetaValues}\label{T:ZetaValues}
For $b\in B$ and for $n\leq q-1$, there exists a constant $C\in H$ and a vector $(*,\dots,*,C\zeta_\rho(b;n))^\top \in \C_\infty^n$ such that
\[\bd := \Exp_\rho\on\left (\begin{matrix}
*\\
\vdots\\
*\\
C\zeta_\rho(b;n)
\end{matrix}\right ) \in H^n,\]
where $C \in H$ and $\bd\in H^n$ are explicitly computable as outlined in the proof.
\end{theorem:ZetaValues}

In \S \ref{S:Transcendence} we discuss the transcendence implications of theorem \ref{T:ZetaValues}.  Using techniques similar to Yu's in \cite{Yu97} we prove the following theorem.

\begin{theorem:Transcendence}\label{T:Transcendence}
Let $\rho$ be a rank 1 sign-normalized Drinfeld module, let $\pi_\rho$ be a fundamental period of the exponential function associated to $\rho$ and define $\zeta_\rho(b;n)$ as above.  Then for $m\leq q-1$ 
\[\dim_{\overline{K}}\Span_{\overline{K}} \{\zeta_\rho(b;1),\dots,\zeta_\rho(b;m),1,\pi_\rho,\dots,\pi_\rho^{m-1} \} = 2m.\]
\end{theorem:Transcendence}

From Theorem \ref{T:Transcendence} we get a corollary which relates to a theorem of Goss (see \cite[Thm. 2.10]{Goss80}).

\begin{corollary:TranscedenceCor}\label{C:Transcendence}
For $1\leq i\leq q-1$, the quantities $\zeta_\rho(b;i)$ are transcendental.  Further, the ratio $\zeta_\rho(b;i)/\pi_\rho^j \in \overline K$ for $0\leq j\leq q-1$ if and only if $i=j=q-1$.
\end{corollary:TranscedenceCor}

Finally in \S \ref{S:Examples} we give examples of the constructions in our main theorems.

\begin{remark}
The author would like to thank his doctoral adviser Matt Papanikolas for many helpful discussions on the topics of this paper and for his continued support throughout his studies.
\end{remark}

\section{Background and notation}\label{S:Background}
As this paper builds on the foundation laid out in \cite{GreenPeriods17}, we require much of the same notation given there.  Let $q=p^r$ for a prime $p$ and an integer $r>0$.  Define the elliptic curve $E$ over $\F_q$, the finite field of size $q$, with Weierstrass equation
\begin{equation}\label{ecequation}
E: y^2 + c_1 ty + c_3y = t^3 + c_2 t^2 + c_4t + c_6,\quad c_i\in \F_q,
\end{equation}
and denote the point at infinity as $\infty$.

\begin{tablesymb}\label{tableofsymbols} \textup{We use the following symbols throughout the paper}\\
\begin{tabular}{l r l l}
$\bA$ &$=$ & $\F_q[t,y]$, &the affine coordinate ring of $E$\\
$\bK$ &$=$ & $\F_q(t,y)$, &the field of fractions of $\bA$\\
$\lambda$&$=$ &$\frac{dt}{2y + c_1t + c_3}$, &the invariant differential on $E$\\
$A$ &$=$ & $\F_q[\theta,\eta]$, &an isomorphic copy of $\bA$ with variables $\theta$, $\eta$\\
$K$ &$=$ & $\F_q(\theta,\eta)$ &an isomorphic copy of $\bK$ with variable $\theta$, $\eta$\\
$\ord_\infty$ &= &  &the valuation of $K$ (and $\bK$) at the infinity place\\
$\deg$ &$=$ & $-\ord_\infty$, &the degree function on $K$, normalized with $\lvert \theta\rvert = 2$ and $\lvert \eta\rvert = 3$\\
$\lvert\cdot\rvert$ &$=$ & $q^{\deg(\cdot)}$, &an absolute value on $K$\\
$K_\infty$ &$=$ &$\widehat K$  &the completion of $K$ at the infinite place\\
$\C_\infty$ &$=$ &$\widehat{\overline{K}}_\infty$  &the completion of an algebraic closure of $K_\infty$\\
$\Xi$ &$=$ &$(\theta,\eta)$, &a point on $E(K)$\\
\end{tabular}
\end{tablesymb}

Define canonical isomorphisms
\begin{equation}\label{canoniso}
\iota:\bK\to K, \quad \chi:K\to \bK
\end{equation}
such that $\iota(t)=\theta$ and $\iota(y) = \eta$ and similarly for $\chi$.  For ease of notation, for $x\in K$ we will sometimes refer to $\chi(x) = \overline x$, i.e. $\overline x$ denotes the element $x$ expressed with the variable $t$ and $y$.  We remark that the isomorphisms $\iota$ and $\chi$ extend to finite algebraic extensions of $\bK$ and $K$, and that $\ord_\infty$, $\deg$ and $\lvert\cdot\rvert$ extend to $K_\infty$ and $\C_\infty$.

Define a seminorm on $M = \langle m_{i,j} \rangle \in \Mat_{\ell\times m}(\C_\infty)$ which extends $\lvert\cdot\rvert$ as in \cite[\S 2.2]{PLogAlg}  by defining
\[|M| = \max_{i,j}(|m_{i,j}|).\]
Note that the seminorm is not multiplicative in general, but for matrices $M\in \Mat_{k\times \ell}(\C_\infty)$ and $N\in \Mat_{\ell\times m}(\C_\infty)$ we do have
\[|MN|\leq |M|\cdot |N|.\]
Also, for $c\in \C_\infty$ and $M,N \in \Mat_{\ell\times m}(\C_\infty)$ we have
\[|cM| = |c|\cdot |M|,\quad |M+N|\leq |M|+|N|.\]

Observe that $\bA$ has a basis $\bA = \Span_{\F_q}(t^i,t^jy),$ for $i,j\geq 0$ and that each term has unique degree.  Thus, when expressed in this basis, an element $a\in \bA$ has a leading term which allows us to define a sign function
\[\sgn:\bA\setminus \{0\} \to \F_q^\times,\]
by setting $\sgn(a)\in \F_q^\times$ to be the coefficient of the leading term of $a\in \bA\setminus \{0\}$.  We also define $\sgn$ on $\bK$, $A$ and $K$ in the natural way.  We extend $\sgn$ further, for any extension $L/\F_q$ using the same notion of leading term for the field $L(t,y)$, and we denote this extended sign function
\[\tsgn:L(t,y)^\times \to L^\times.\]

If $L/\F_q$ is an algebraically closed extension of fields, then we define $\tau:L\to L$ to be the $q$th power Frobenius map and $L[\tau]$ to be the ring of twisted polynomials in $\tau$, subject to the relation $\tau c = c^q\tau$ for $c\in L$.  Define the Frobenius twisting automorphism for $g = \sum c_{j,k} t^jy^k\in  L[t,y]$ to be
\begin{equation}\label{twistdef}
g\twist = \sum c_{j,k}^q t^jy^k,
\end{equation}
and let $g\twisti$ denote the $i$th iteration of twisting.  We extend twisting to matrices in $\Mat_{\ell\times m}(L(t,y))$ by twisting entry-wise and use this notion of twisting to define $\Mat_n(L)[\tau]$, the non-commutative ring of polynomials in $\tau$ subject to the relation $\tau M = M\twist \tau$ for $M\in \Mat_n(L)$.  In the setting of Anderson $\bA$-modules, we let $\Mat_n(L)[\tau]$ act on $L^n$ for $n\geq 1$ via twisting, i.e. for $\Delta = \sum M_i \tau^i$, with $M_i\in \Mat_n(L)$ and $\ba \in L^n$,
\begin{equation}\label{Deltaaction}
\Delta(\ba) = \sum M_i \ba\twisti.
\end{equation}
Further, for $X = (t_0,y_0)\in E(L)$, we define $X\twist = (t_0^q,y_0^q)$ and extend twisting to divisors in the obvious way, noting that for $g\in L(t,y)$
\[\divisor(g\twist) = \divisor(g)\twist.\]

For $c\in A$, define the Tate algebra 
\begin{equation} \label{Tatealgs}
   \TT_c = \biggl\{ \sum_{i=0}^\infty b_i t^i \in \power{\C_\infty}{t} \biggm| \big\lvert c^i b_i \big\rvert \to 0 \biggr\},
\end{equation}
the set of power series which converge on the closed disk of radius $|c|$.  For convenience, we set $\TT:= \TT_1$, and we have natural embeddings $\bA \hookrightarrow \TT_\theta[y] \hookrightarrow \TT[y]$.  For a fixed dimension $n>0$, define the Gauss norm $\lVert \cdot \rVert_c$ for a vector of functions $\bh = \sum \bd_i t^i \in \TT_c^n$ with $\bd_i\in \C_\infty^n$ by setting
\[\lVert \bh \rVert_c = \max_i |c^i\bd_i|,\]
where $|\cdot|$ is the matrix seminorm described above.  Extend $\lVert \cdot \rVert_c$ to $\TT_c[y]^n$ by setting $\lVert \bh_1+y\bh_2 \rVert_c = \max(\lVert \bh_1 \rVert_c ,\lVert \eta \bh_2 \rVert_c )$ for $\bh_1,\bh_2\in \TT_c^n$.  Note that $\TT_c[y]^n$ is complete under the Gauss norm.  Using the definition given in \cite[Chs.~3--4]{FresnelvdPut}, we observe that the rings $\TT[y]$ and $\TT_\theta[y]$ are affinoid algebras corresponding to rigid analytic affinoid subspaces of $E/\C_\infty$.  If we denote $\cE$ as the rigid analytic variety associated to $E$ and $\cU\subset \cE$ as the inverse image under $t$ of the closed disk in $\C_\infty$ of radius $|\theta|$ centered at 0, then $\cU$ is the affinoid subvariety of $\cE$ associated to $\TT_\theta[y]$.  Note that Frobenius twisting extends to $\TT_c[y]^n$ and its fraction field and that $\TT$ and $\TT[y]$ have $\F_q[t]$ and $\bA$, respectively, as their fixed rings under twisting (see \cite[Lem.~3.3.2]{P08}).  We extend the action of $\Mat_n(L)[\tau]$ on $L^n$ described in \eqref{Deltaaction} to an action of $\Mat_n(\TT[y])[\tau]$ on $\TT[y]^n$ in the natural way.

\section{Review of tensor powers of Drinfeld modules and Anderson generating functions}\label{S:Review}
We recall several facts about rank 1 sign-normalized Drinfeld modules as set out in \cite[\S 3]{GP16} (see also \cite{Goss}, \cite{Hayes92} or \cite{Thakur} for a thorough account of Drinfeld modules).  First note that we can pick a unique point $V$ in $E(H)$ whose coordinates have positive degree such that $V$ satisfies the equation on $E$
\begin{equation}\label{Vequation}
(1-\Fr)(V) = V-V\twist = \Xi,
\end{equation}
If we set $V = (\alpha,\beta)$, then $\deg(\alpha) = 2$, $\deg(\beta) = 3$, $\sgn(\alpha) = \sgn(\beta) = 1$ and $H = K(\alpha,\beta)$.  There is a unique function in $H(t,y)$, called the shtuka function, with $\tsgn(f) = 1$ and with divisor
\begin{equation}\label{fdivisor}
\divisor(f) = (V\twist) - (V) + (\Xi) - (\infty).
\end{equation}
We can write
\begin{equation} \label{fdef}
  f = \frac{\nu(t,y)}{\delta(t)} = \frac{y - \eta - m(t-\theta)}{t-\alpha} = \frac{y + \beta + c_1\alpha + c_3 - m(t-\alpha)}{t-\alpha},
\end{equation}
where $m\in H$ is the slope between the collinear points $V\twist, -V$ and $\Xi$, and $\deg(m) = q$, and
\begin{gather} \label{nudiv}
\divisor(\nu) = (V\twist) + (-V) + (\Xi) - 3(\infty), \quad \divisor(\delta) = (V) + (-V) - 2(\infty).
\end{gather}
Let $L/K$ be an algebraically closed field.  A Drinfeld $\bA$-module over $L$ is an $\F_q$-algebra homomorphism $\rho : \bA \to L[\tau]$ such that for all $a \in \bA$,
\[
  \rho_a = \iota(a) + b_1 \tau + \dots + b_n \tau^n.
\]
The rank $r$ of $\rho$ is the unique integer such that $n = r \deg a$ for all $a$.  Rank 1 sign-normalized means that we require $r=1$ and that $b_n = \sgn(a)$.

For a Drinfeld $\bA$-module $\rho$, we denote the exponential and logarithm function as 
\[
  \exp_\rho(z) = \sum_{i=0}^\infty \frac{z^{q^i}}{d_i},\quad   \log_\rho(z) = \sum_{i=0}^\infty \frac{z^{q^i}}{\ell_i} \in \power{H}{z}, \quad d_0 = \ell_0=1.
\]
Formulas for the coefficients of $\exp_\rho$ and $\log_\rho$ are given in \cite[Thm. 3.4 and Cor. 3.5]{GP16} as
\begin{equation}\label{Drinfeldexp}
  \exp_\rho(z) = \sum_{i=0}^\infty \frac{z^{q^i}}{(f f^{(1)} \cdots f^{(i-1)})|_{\Xi^{(i)}}},
\end{equation}
\begin{equation}\label{Drinfeldlog}
  \log_\rho(z) = \sum_{i=0}^\infty \Res_{\Xi} \biggl( \frac{ \tlambda^{(i+1)}}{f f^{(1)} \cdots f^{(i)}} \biggr) z^{q^i} =  \sum_{i=0}^\infty \left ( \frac{ \delta^{(i+1)}}{\delta^{(1)} f^{(1)} \cdots f^{(i)}} \bigg|_{\Xi} \right ) z^{q^i},
\end{equation}
where $\tlambda \in \Omega^1_{E/H}(-(V) + 2(\infty))$ is the unique differential $1$-form such that $\Res_{\Xi}(\tlambda^{(1)} / f) = 1$.

We now recount the theory of $n$-dimensional tensor powers of $\bA$-motives and dual $\bA$-motives from \cite[\S 3-4]{GreenPeriods17}.  For $n\geq 1$, let $U = \Spec L[t,y]$ be the affine curve $(L \times_{\F_q} E) \setminus \{ \infty \}$.  Define the underlying space of the $\bA$-motive $M$ and the dual $\bA$-motive $N$ as
\begin{equation}\label{MNdef}
M = \Gamma (U,\cO_E(nV)),\quad N = \Gamma (U,\cO_E(-nV\twist)),
\end{equation}
define an $L[t,y,\tau]$-action on $M$ and an $L[t,y,\sigma]$-action on $N$ by letting $L[t,y]$ act by multiplication and defining the action for $a\in M$ and $b \in N$ as
\begin{equation}\label{tauaction}
\tau a = f^na\twist\quad \text{and}\quad \sigma b = f^n b\twistinv.
\end{equation}
We remark that $M$ and $N$ are the $n$th tensor powers of an $\bA$-motive and a dual $\bA$-motive respectively, and we refer the reader to \cite[\S 3]{GreenPeriods17} for details on this construction.

Define a functions $g_i \in M$ for $1\leq i\leq n$ with divisors
\begin{equation}\label{gidivisor}
\divisor(g_{j}) = -n(V) +  (n-j)(\infty) + (j-1)(\Xi) + ([j-1]V\twist + [n-(j-1)]V),
\end{equation}
and functions $h_i \in N$ with divisors
\begin{equation}\label{hidivisor}
\divisor(h_j) = n(V\twist) - (n+j)(\infty) + (j-1)(\Xi) + (-[n - (j - 1)]V\twist-[j-1]V),
\end{equation}
with $\tsgn(g_i) = \tsgn(h_i)=1$.  When it is convenient, we will extend the definitions of the functions $g_i$ and $h_i$ for $i>n$ by writing $i=jn+k$, where $1\leq k< n$, and then denoting,
\begin{equation}\label{gihigher}
g_i := \tau^j(g_k) = (ff\twist \dots f\twistk{j-1})^n g_k\twistk{j}\quad \text{and}\quad h_i := \sigma^j(h_k) = (ff\twistinv \dots f\twistk{1-j})^n h_k\twistk{-j}.
\end{equation}
\begin{proposition}\label{P:review}
The following facts about the functions $g_i$ and $h_i$ are proved in \cite[\S 3-4]{GreenPeriods17}:
\begin{enumerate}
\item[(a)] For $n\geq 2$, the set of functions $\{g_i\}_{i=1}^n$ generate $M$ as a free $L[\tau]$-module and the set of functions $\{h_i\}_{i=1}^n$ generate $N$ as a free $L[\sigma]$-module.
\item[(b)] For $1\leq j\leq n-1$ we obtain the following identities of functions
\[g_1h_1\twistinv = t-t([n]V),\]
\[g_{j+1}h_{n-(j-1)} = f^n\cdot (t-t([j]V\twist+[n-j]V)).\]
\item[(c)] For $1\leq k\leq n$, the quotient functions $(g_{k+1}/g_{k})$ have divisors 
\[
\divisor(g_{k+1}/g_{k}) = (\Xi) - (\infty) + ([k]V\twist + [n - k]V) - ([k-1]V\twist + [n - (k-1)]V).
\]
\item[(d)] We can write $(g_{k+1}/g_{k})$ as a quotient of a linear function of degree 3 and a linear function of degree 2, which we label
\[
\frac{\nu_k(t,y)}{\delta_k(t)} := \frac{y - \eta - m_k(t-\theta)}{t - t([k-1]V\twist + [n - (k-1)]V)} =\frac{g_{k+1}}{g_k}  ,
\]
for $1\leq k\leq n$, where $m_k$ is the slope between the points $[k]V\twist + [n - k]V$ and $[-(k-1)]V\twist - [n - (k-1)]V$.
\item[(e)] For $1\leq i\leq n$, there exist constants $a_i,b_i,y_i,z_i\in H$ such that we can write 
\begin{align*}
tg_i &= \theta g_i + a_ig_{i+1} + g_{i+2},\\
y g_i & = \eta g_i + y_i g_{i+1} + z_i g_{i+2} + g_{i+3},\\
th_i &= \theta h_i + b_ih_{i+1} + h_{i+2}.
\end{align*}
\item[(f)] For the constants defined in \textnormal{(e)} we have $a_j = b_{n-j}$ for $1\leq j\leq n-1$ and $a_n = b_n^q.$
\item[(g)] The coefficients $a_i$ are given by
\[a_i = \frac{2\eta + c_1\theta + c_3}{\theta - t([i]V\twist + [n-i]V)}\]
\end{enumerate}
\end{proposition}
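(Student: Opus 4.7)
Parts (a)--(f) follow from divisor calculations and Riemann--Roch dimension counts carried out in~\cite[\S 3--4]{GreenPeriods17}; in each case the argument is essentially routine. Briefly: (a) the $g_i$ have distinct orders of vanishing at $\infty$, giving a triangular basis of $M$ over $L[\tau]$ of the correct rank; (b) direct addition of the divisors~\eqref{gidivisor}, \eqref{hidivisor}, and~\eqref{fdivisor} (with a sign check via $\tsgn$); (c) subtraction of the divisors in~\eqref{gidivisor}; (d) identification of $\nu_k$ as the line through the three collinear points $\Xi$, $[k]V\twist+[n-k]V$, and $-([k-1]V\twist+[n-(k-1)]V)$ (collinear by~\eqref{Vequation}) and $\delta_k$ as the corresponding vertical line; (e) a basis argument in the Riemann--Roch space containing $tg_i$ and $yg_i$; and (f) the duality between $M$ and $N$ implicit in~(b). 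The new content is (g).

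\textbf{Proof of (g).} Write $Q_j := [j]V\twist + [n-j]V$. Rearranging the $t$-multiplication formula in~(e) gives the global identity on $E$
\[
(t-\theta)\frac{g_i}{g_{i+1}} - \frac{g_{i+2}}{g_{i+1}} = a_i.
\]
Substituting from (d) the expressions $g_i/g_{i+1} = (t - t(Q_{i-1}))/(y - \eta - m_i(t-\theta))$ and $g_{i+2}/g_{i+1} = (y-\eta - m_{i+1}(t-\theta))/(t - t(Q_i))$, I compute the limit as $(t,y) \to \Xi$ using $t - \theta$ as a local parameter. Expanding $y - \eta = \mu(t-\theta) + O((t-\theta)^2)$ with $\mu = (3\theta^2 + 2c_2\theta + c_4 - c_1\eta)/(2\eta+c_1\theta+c_3)$ (the tangent slope to $E$ at $\Xi$), the first term tends to $(\theta - t(Q_{i-1}))/(\mu - m_i)$, and the second to $0$ since $\nu_{i+1}(\Xi)=0$ while $\delta_{i+1}(\Xi) = \theta - t(Q_i) \neq 0$. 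Hence
\[
a_i = \frac{\theta - t(Q_{i-1})}{\mu - m_i}.
\]

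\textbf{Concluding step.} To reach the stated form, use that $\Xi$, $Q_i$, $-Q_{i-1}$ are collinear (since $Q_{i-1} = \Xi + Q_i$, a consequence of~\eqref{Vequation}), lying on the line $y = \eta + m_i(t-\theta)$. Substituting this line into~\eqref{ecequation} produces a monic cubic $P(t)$ whose roots are $\theta$, $t(Q_i)$, $t(Q_{i-1})$, so that
\[
(\theta - t(Q_i))(\theta - t(Q_{i-1})) = \lim_{t\to\theta}\frac{P(t)}{t-\theta} = P'(\theta).
\]
Direct computation of $P'(\theta)$ from~\eqref{ecequation} gives $P'(\theta) = (3\theta^2 + 2c_2\theta + c_4 - c_1\eta) - m_i(2\eta + c_1\theta + c_3) = (\mu - m_i)(2\eta+c_1\theta+c_3)$, and rearranging yields the desired formula $a_i = (2\eta + c_1\theta + c_3)/(\theta - t(Q_i))$.

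\textbf{Main obstacle.} The only non-routine step is the final algebraic identity, which requires careful bookkeeping with the full general Weierstrass form rather than a simplified model $y^2 = x^3 + ax + b$. A minor prerequisite is that $t - \theta$ be a local parameter at $\Xi$, equivalently $2\eta + c_1\theta + c_3 \neq 0$ (i.e., $\Xi$ is not $2$-torsion), which holds in the generic setup of the paper.
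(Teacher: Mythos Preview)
Your argument for (g) is correct and complete. The evaluation of the identity $(t-\theta)g_i/g_{i+1} - g_{i+2}/g_{i+1} = a_i$ at $\Xi$ is handled cleanly, and the concluding use of the cubic $P(t)$ obtained by intersecting the line $y = \eta + m_i(t-\theta)$ with $E$ is exactly right: the factorization $P(t) = (t-\theta)(t-t(Q_i))(t-t(Q_{i-1}))$ together with the direct computation of $P'(\theta)$ gives the needed identity $(\theta-t(Q_i))(\theta-t(Q_{i-1})) = (\mu-m_i)(2\eta+c_1\theta+c_3)$, and the cancellation to reach the stated formula is valid.

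There is essentially nothing to compare against in the present paper: Proposition~\ref{P:review} is a review statement, and the paper simply cites \cite[\S 3--4]{GreenPeriods17} without reproducing any argument. Your outline for (a)--(f) is an accurate summary of the kind of divisor and Riemann--Roch reasoning that underlies those facts, and your self-contained derivation of (g) is a genuine addition beyond what this paper records. The only comment is that your ``main obstacle'' paragraph slightly undersells the argument: the non-vanishing of $2\eta+c_1\theta+c_3$ and of $\theta - t(Q_i)$ are automatic here since $\theta,\eta$ are independent transcendentals over $\F_q$, so $\Xi$ cannot satisfy any nontrivial algebraic relation such as being $2$-torsion or coinciding with one of the $Q_j$.
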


An $n$-dimensional Anderson $\bA$-module is a map $\rho:\bA\to \Mat_n(L)[\tau]$, such that for $a\in \bA$
\[\rho_a = d[a] + A_1 \tau + \dots + A_m\tau^m,\]
where $d[a] = \iota(a)I + N$ for some nilpotent matrix $N\in \Mat_n(L)$.  We will always label the constant coefficient of $\rho_a$ as $d[a]$, and we remark that $d:\bA\to \Mat_n(L)$ is a ring homomorphism.  The map $\rho$ describes an action of $\bA$ on the underlying space $L^n$ in the sense defined in \eqref{Deltaaction}, allowing us to view $L^n$ as an $\bA$-module.  In what follows, for convenience, we fix the algebraically closed field $L$ to be $\C_\infty$, although we remark that much of the theory applies equally to any algebraically closed field.

To ease notation throughout the paper, for a fixed dimension $n$, we define $N_i\in \Mat_n(\F_q)$ for an integer $i\geq 1$ to be the matrix with $1$'s along the $i$th super-diagonal and $0$'s elsewhere and define $N_i$ for $i\leq -1$ to be the matrix with $1$'s along the $i$th sub-diagonal and $0$'s elsewhere.  We also define $E_1$ to be the matrix with a single $1$ in the lower left corner and zeros elsewhere and in general define $E_i$ to be $N_{i-n}$.  We also define $N_{i}(\alpha_1,\dots,\alpha_{n-i})$ to be the matrix with the entries $\alpha_1,\alpha_2, \dots, \alpha_{n-i}$ along the $i$th super diagonal and similarly for $N_{i-n}(\alpha_1,\dots,\alpha_{n-i})$ and $E_i(\alpha_1,\dots, \alpha_i)$.

Given $\bA$, an affine coordinate ring of an elliptic curve, \cite[\S 3]{GP16} describes how to construct $\rho$, the unique sign-normalized rank 1 Drinfeld module associated to $\bA$.  Then \cite[\S 4]{GreenPeriods17} describes how to construct the $n$th tensor power of $\rho$ by setting
\begin{equation}\label{taction}
\rho\on_t := d[\theta] + E_\theta \tau := (\theta I +  N_1(a_1,\dots,a_{n-1}) + N_2) + (E_1(a_n)+E_2)\tau,
\end{equation}
\begin{align}\label{yaction}
\begin{split}
\rho\on_y := d[\eta] + E_\eta \tau:= &(\theta I +  N_1(y_1,\dots,y_{n-1}) + N_2(z_1,\dots,z_{n-2}) + N_3)  \\
&+(E_1(y_n)+E_2(z_{n-1},z_n) + E_3)\tau,
\end{split}
\end{align}
where $a_i$, $y_i$ and $z_i$ are given in Proposition \ref{P:review}.

To simplify notation later, we define strictly upper triangular matrices
\begin{equation}\label{Nthetadef}
N_\theta = d[\theta] - \theta I\quad\text{and}\quad N_\eta = d[\eta] -   \eta I .
\end{equation}
With the definitions of $\rho\on_t$ and $\rho\on_y$, we define the $\F_q$-linear map
\[\rho\on_a: \bA \to \Mat_n(H)[\tau]\]
for any $a\in \bA$ by writing $a=\sum c_it^i + y\sum d_it^i$ with $c_i,d_i\in \F_q$, and extending using linearity and the composition of maps $\rho\on_{t^a} = (\rho\on_t)^a$.  A priori, the map $\rho\on$ is just an $\F_q$-linear map, but using ideas from \cite{HJ16} the author proves in \cite{GreenPeriods17} that $\rho\on$ is actually an Anderson $\bA$-module.

We will label the exponential and logarithm function associated to $\rho\on$ as
\begin{equation}\label{Exptensor}
\Exp\on_\rho(\bz) = \sum_{i=0}^\infty Q_i \bz\twisti\in \power{\Mat_n(\C_\infty)}{\bz},\quad \Log_\rho\on(\bz) = \sum_{i=0}^\infty P_i \bz\twisti \in \power{\Mat_n(\C_\infty)}{\bz},
\end{equation}
defined so that $Q_0 = P_0 = I$.  We note that $\Log_\rho\on$ is defined to be the formal inverse of the power series $\Exp_\rho\on$, and that the exponential and logarithm functions satisfy functional equations for all $a\in \bA$ and $\bz\in \C_\infty^n$
\begin{equation}\label{expfunctionalequation}
\Exp_\rho\on(d[a]\bz) = \rho\on_a(\Exp_\rho\on(\bz)), \quad \Log _\rho(\rho\on _a(\bz)) = d[a]\Log _\rho(\bz).
\end{equation}
We also note that $\Exp_\rho\on$ is an entire function from $\C_\infty^n$ to $\C_\infty^n$ and that $\Log _\rho$ has a finite radius of convergence in $\C_\infty^n$ which we denote $r_L$.

We now recall facts about the spaces $\Omega$ and $\Omega_0$ and about vector-valued Anderson generating functions from \cite[\S 5-6]{GreenPeriods17}.  For $n>1$ define the space of rigid analytic functions
\[\BB := \Gamma \bigl( \cU, \cO_E(-n(V) + n(\Xi))\bigr)\]
where $\cU$ is the inverse image under $t$ of the closed disk in $\C_\infty$ of radius $|\theta|$ centered at 0 defined in section \S \ref{S:Background} and define $\bA$-modules of functions
\begin{equation}\label{Omegadef}
\Omega = \{h\in \BB\mid h\twist - f^n h = g \in N\},\quad \Omega_0 = \{h\in \BB\mid h\twist - f^n h = 0\},
\end{equation}
where we recall the dual $\bA$-motive $N=\Gamma(U,\cO_E(-nV\twist))$.  For a function $h(t,y)\in \Omega$, define the map $T:\Omega \to \TT[y]^n$ by
\begin{equation}\label{Tmapdef}
T( h(t,y)) = (h(t,y)g_1, \dots , h(t,y) g_n)^\top,
\end{equation}
where the functions $g_i$ are the basis elements defined in Proposition \ref{P:review}.  For ease of notation later on, we also define
\begin{equation}\label{gvector}
\bg := (g_1,\dots,g_n)^\top.
\end{equation}

Define operators on the space $\TT[y]^n$ which act in the sense defined in \S \ref{S:Background} by setting
\begin{equation}
D_t := \rho\on_t - t,\quad\text{and}\quad D_y = \rho\on_y - y,
\end{equation}
\begin{equation}\label{GE1operator}
G - E_1\tau := (\text{diag}(g_2/g_1,\dots, g_{n+1}/g_n) - N_1) - E_1\tau.
\end{equation}
A quick calculation shows that $\left (G-E_1\tau\right )(T(h)) = 0$ for any $h\in \Omega_0$, and thus the operator $G-E_1\tau$ can be viewed as a vector version of the operator $\tau - f^n$.  In fact, the relationship is even stronger, as is proved in the following lemma.

\begin{lemma}[Lemma 5.3 of \cite{GreenPeriods17}]\label{L:GE1andOmega}A vector $J(t,y)\in \TT[y]^n$ satisfies $(G-E_1\tau)(J)=0$ if and only if there exists some function $h(t,y)\in \Omega_0$ such that $J(t,y) = T(h(t,y))$.
\end{lemma}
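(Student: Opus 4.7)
The plan is to reduce both directions to a coordinate-wise computation against the operator $G - E_1\tau$, using the identity $g_{n+1} = f^n g_1\twist$ built into \eqref{gihigher}.

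For the ($\Leftarrow$) direction, I would fix $h \in \Omega_0$ and compute $(G - E_1\tau)(h\bg)$ entry by entry. With $G = \mathrm{diag}(g_2/g_1, \dots, g_{n+1}/g_n) - N_1$, the diagonal scaling and the super-diagonal shift cancel exactly in coordinates $1$ through $n-1$, leaving a contribution only in the last coordinate: there $G(h\bg)_n = h g_{n+1} = h f^n g_1\twist$ and $(E_1\tau)(h\bg)_n = h\twist g_1\twist$. The defining relation $h\twist = f^n h$ for $\Omega_0$ is precisely what forces these to agree.

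For the ($\Rightarrow$) direction, I would take $J = (J_1, \dots, J_n)^\top \in \TT[y]^n$ in the kernel and read off the $n$ scalar equations. The first $n-1$ give $J_{i+1} = (g_{i+1}/g_i)\, J_i$, which telescope to $J_i = (g_i/g_1)\, J_1$ for all $i$. Setting $h := J_1/g_1$ then forces $J = h \bg = T(h)$. Substituting $J_n = h g_n$ and $g_{n+1} = f^n g_1\twist$ into the remaining scalar equation $(g_{n+1}/g_n) J_n = J_1\twist$ produces $h\twist = f^n h$, the functional equation for $\Omega_0$.

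The main obstacle is then verifying $h \in \BB$, i.e., that the meromorphic function $h = J_1/g_1$ has the correct divisor on the affinoid $\cU$. The zero condition $\ord_V(h) \geq n$ demanded by the $-n(V)$ summand is automatic, since $g_1$ has a pole of order $n$ at $V$ by \eqref{gidivisor} while $J_1$ is analytic on $\cU$. The subtle step is ruling out spurious poles of $h$ at the other zeros of $g_1$ inside $\cU$ (such as $[n]V$): combining \eqref{gidivisor} with the divisor formula for $g_i$ shows that each quotient $g_i/g_1$ also has a pole at $[n]V$ for $i \geq 2$, so the hypothesis $J_i = (g_i/g_1) J_1 \in \TT[y]$ forces $J_1$ to vanish to the matching order there, rendering $h$ regular at that point. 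The functional equation $h\twist = f^n h$ then controls the pole behavior at $\Xi$, and $h \in \Omega_0$ follows.
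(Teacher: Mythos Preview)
The lemma is cited from \cite{GreenPeriods17} and is not proved in the present paper, so there is no in-text proof to compare against; I can only assess your argument on its own terms. Your $(\Leftarrow)$ direction and the algebraic portion of $(\Rightarrow)$---deriving $J_i = (g_i/g_1)J_1$, setting $h = J_1/g_1$, and reading off $h\twist = f^n h$ from the last coordinate---are correct.

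The gap is in the final step ``$h \in \BB$.'' You assert that $J_1$ is analytic on $\cU$, but the hypothesis is only $J \in \TT[y]^n$, and $\TT[y]$ is the affinoid algebra for the disk $|t| \le 1$, \emph{not} for $\cU$ (which corresponds to $\TT_\theta[y]$, the disk $|t| \le |\theta| = q^2$). Since $|\alpha| = |\theta| = q^2$, the points $V$ and $\Xi$ lie in $\cU$ but outside the unit-disk affinoid, so your divisor checks at $V$ and $\Xi$ are being carried out at points where $J_1$ is not yet known to be defined. The functional equation alone does not obviously extend $h$ across this annulus.

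A clean way to close the gap is to bypass the divisor verification on $\cU$ entirely. Your local argument at $[n]V$ (using that $J_2 = (g_2/g_1)J_1 \in \TT[y]$ while $g_2/g_1$ has a simple pole there) does show that $h = J_1/g_1$ lies in $\TT[y]$, since the only affine zero of $g_1$ is $[n]V$ and its pole $V$ is outside the unit disk. Now use Proposition~\ref{P:EuReview}(a): since $\omega_\rho \in \TT[y]^\times$, the quotient $h/\omega_\rho^n$ lies in $\TT[y]$ and is fixed by Frobenius twisting (both $h$ and $\omega_\rho^n$ satisfy $(\cdot)\twist = f^n(\cdot)$), hence $h/\omega_\rho^n \in \bA$ by the remark after \eqref{Tatealgs}. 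Thus $h = a\,\omega_\rho^n$ for some $a \in \bA$, which lands $h$ in $\Omega_0$ directly.
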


Define the operator $M_\tau :=N_1 + E_1\tau,$ and denote the diagonal matrices
\begin{equation}\label{Mm}
M_m := \text{diag}(z_1-a_2,z_2-a_3, \dots , z_{n-1} - a_n, z_n - a_1\twist),\quad M_\delta:= \text{diag}(\delta_1,\delta_2, \dots , \delta_n).
\end{equation}
where $a_i$, $z_i$ and $\delta_i$ are defined in Proposition \ref{P:review}.  Then for $1\leq i\leq n$ denote
\[p_i = \eta - y - (\theta - t)(z_i - a_{i+1}),\quad r_i = y_i - (\theta - t) - a_i(z_i - a_{i+1})\]
where we understand $a_{n+1}$ to be $a_1\twist$, and define matrices
\begin{equation}\label{M'def}
M_1' = \text{diag}(p_1,\dots,p_n) + N_1(r_1,\dots r_{n-1}),\quad M_2' = E_1(r_n).
\end{equation}
Also define matrices
\begin{equation}\label{Midef}
M_1 = M_1'\big|_{t=0,y=0}\quad \text{and}\quad M_2 = M_2'\big|_{t=0,y=0},
\end{equation}
where above we formally evaluate $M_i'$ at $t=0$ and $y=0$.
\begin{proposition}\label{P:OperatorReview}
We have the following facts from \cite[\S 5]{GreenPeriods17} about the above operators:
\begin{enumerate}
\item[(a)] $(G - E_1\tau) = M_\delta\inv (D_y - (M_\tau + M_m) D_t)$
\item[(b)] $M_1'\bg + M_2' f^n \bg\twist = 0$
\item[(c)] $M_1 + M_2\tau = (\rho\on_y - (M_\tau + M_m) \rho\on_t)$
\end{enumerate}
\end{proposition}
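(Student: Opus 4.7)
The plan is to prove the three statements in the order (b), (a), (c), since (b) is the concrete function-theoretic identity that will be invoked for both (a) and (c). For part (b), I would read off components of $M_1'\bg + M_2' f^n \bg\twist = 0$: since $M_1'$ is supported on the main diagonal and first superdiagonal while $M_2' = E_1(r_n)$ is supported only at position $(n,1)$, the vector identity collapses (using the extended-index convention $g_{n+1} = \tau g_1 = f^n g_1\twist$) to the family of scalar equations $p_i g_i + r_i g_{i+1} = 0$ for $1 \leq i \leq n$, or equivalently $u_i := g_{i+1}/g_i = -p_i/r_i$. To verify this, divide the two relations of Proposition \ref{P:review}(e) by $g_i$, obtaining $t - \theta = a_i u_i + u_i u_{i+1}$ and $y - \eta = y_i u_i + z_i u_i u_{i+1} + u_i u_{i+1} u_{i+2}$. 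Applying the first identity at indices $i$ and $i+1$ to eliminate the higher $u$-products in the second and then simplifying yields $y - \eta = r_i u_i + (z_i - a_{i+1})(t - \theta)$, which rearranges to $r_i u_i = -p_i$. A useful byproduct is the identification of the slope $m_i$ from Proposition \ref{P:review}(d) as $z_i - a_{i+1}$.

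For part (a), both sides are operators in $\Mat_n(\TT[y])[\tau]$ of $\tau$-degree at most $1$, so it suffices to match $\tau^0$- and $\tau^1$-coefficients entry-by-entry. The left-hand side $M_\delta(G - E_1\tau)$ has diagonal $\tau^0$-entries $\delta_i(g_{i+1}/g_i) = \nu_i$ and a single nonzero $\tau^1$-entry $\delta_n$ at position $(n,1)$. On the right, I would expand $(M_\tau + M_m)D_t = (N_1 + M_m + E_1\tau)D_t$ using the explicit formulas \eqref{taction}, \eqref{yaction}. The key structural observation is that the naive $\tau^2$ contribution $E_1\tau \cdot E_\theta\tau = (E_1 E_\theta\twist)\tau^2$ vanishes, since $E_1$ is supported at $(n,1)$ while $E_\theta\twist$ has zero entries in row $1$. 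Once the right-hand side is reduced to $\tau$-degree $\leq 1$, entry-by-entry comparison, using the identification $m_i = z_i - a_{i+1}$ from (b) and the explicit forms of $\nu_i, \delta_i$ from Proposition \ref{P:review}(d), finishes (a).

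Part (c) is then the specialization of (a) at $t = y = 0$. At this point $D_t = \rho\on_t$ and $D_y = \rho\on_y$, so (a) reads $M_\delta|_0(G|_0 - E_1\tau) = \rho\on_y - (M_\tau + M_m)\rho\on_t$; comparing the $\tau^0$- and $\tau^1$-coefficients on the left with the specializations of $M_1'$ and $M_2'$ at $t = y = 0$ (via (b) applied componentwise, which turns $M_\delta G|_0$ and $M_\delta E_1$ into the specializations of $M_1'$ and $M_2'$ respectively) identifies the left-hand side with $M_1 + M_2\tau$ as required.

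The hard part will be the bookkeeping in (a): carefully tracking how the products $(N_1 + M_m)N_1(\ba)$, $(N_1 + M_m)N_2$, $(N_1 + M_m)E_\theta$, and $E_1 d[\theta]\twist$ distribute across matrix entries, and verifying that all off-diagonal, non-$(n,1)$ contributions cancel to leave exactly the diagonal pattern of $\nu_i$'s produced by $M_\delta G$ on the left.
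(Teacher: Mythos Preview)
The paper does not prove this proposition at all: it is a review statement cited verbatim from \cite[\S 5]{GreenPeriods17}, so there is no in-paper argument to compare your attempt against. Your plan to derive (b) first and then deduce (a) and (c) is reasonable, and your manipulation of the relations in Proposition~\ref{P:review}(e) for $1\le i\le n-1$ is correct and gives the clean identities $\nu_i=-p_i$, $\delta_i=r_i$, $m_i=z_i-a_{i+1}$.

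There are, however, two concrete gaps. First, your treatment of (b) at the boundary index $i=n$ is not right as written. To eliminate $u_{n+1}u_{n+2}$ you must use the $t$-relation at index $n{+}1$, and that relation is obtained by \emph{twisting} the $i=1$ relation; the constant term becomes $\theta^{q}$, not $\theta$. Carrying your own algebra through then yields
\[
\bigl[y_n - a_n(z_n-a_1\twist) + (t-\theta^{q})\bigr]\,u_n \;=\; -p_n,
\]
not $r_n u_n=-p_n$ with $r_n=y_n-(\theta-t)-a_n(z_n-a_1\twist)$. You need to address this discrepancy explicitly rather than asserting that the same reduction works uniformly in $i$.

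Second, your description of the left-hand side in (a) is incomplete. By definition \eqref{GE1operator} the operator $G-E_1\tau$ equals $(\text{diag}(g_{i+1}/g_i)-N_1)-E_1\tau$, so
\[
M_\delta(G-E_1\tau)\;=\;\text{diag}(\nu_1,\dots,\nu_n)\;-\;N_1(\delta_1,\dots,\delta_{n-1})\;-\;\delta_n E_1\tau,
\]
which has nonzero superdiagonal $\tau^0$-entries $-\delta_i$ at positions $(i,i{+}1)$. Thus your stated target ``all off-diagonal, non-$(n,1)$ contributions cancel to leave exactly the diagonal pattern of $\nu_i$'s'' is the wrong bookkeeping goal: on the right-hand side you must produce exactly those superdiagonal terms $-\delta_i$, not zero. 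This also affects your deduction of (c), since the specialization of the $\tau^0$-part at $t=y=0$ is $\text{diag}(\nu_i|_0)-N_1(\delta_i|_0)$, which you then have to identify with $M_1$ via the identities $\nu_i=-p_i$ and $\delta_i=r_i$ (and here again the $i=n$ boundary case enters for the $\tau^1$-part).
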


We now recall the functions $\omega_\rho$, $E_\bu\on$ and $G_\bu\on$ defined in \cite[\S 4]{GP16}.  Let
\begin{equation} \label{omegarhoprod}
  \omega_\rho = \xi^{1/(q-1)} \prod_{i=0}^\infty \frac{\xi^{q^i}}{f^{(i)}}, \quad \xi = -\frac{m\theta -\eta}{\alpha} = -\biggl( m + \frac{\beta}{\alpha} \biggr),
\end{equation}
where $m$, $\alpha$, and $\beta$ are given at the beginning of this section and recall that $\omega_\rho \in \TT[y]^\times$ and $\omega_\rho^n \in \Omega_0$.  For $\bu = (u_1,...,u_n)^\top \in \C_\infty^n$ define
\begin{equation}\label{Eudef}
E_{\bu}^{\otimes n}(t) =  \sum_{i=0}^\infty \Exp_\rho\on\left (d[\theta]^{-i-1} \bu\right ) t^i,
\end{equation}
\begin{equation}\label{Gudef}
G_\bu\on (t,y) = E_{d[\eta]\bu}\on(t) + (y+c_1t + c_3) E_\bu\on(t).
\end{equation}  We remark that 1-dimensional Anderson generating functions have proved useful in studying algebraic relations among logarithm values, periods, quasi-periods, $L$-series and motivic Galois groups of Drinfeld modules (e.g., see \cite{CP11}, \cite{CP12}, \cite{EP14}, \cite{Pellarin08}--\cite{Perkins14a}, \cite{Sinha97}).  In this paper we use vector-valued Anderson generating functions to get formulas for the coefficients of the exponential and logarithm functions.

\begin{proposition} \label{P:EuReview}
We collect the following facts from \cite[\S 5-6]{GreenPeriods17} about the above functions:
\begin{enumerate}
\item[(a)] The function $\omega_\rho^n$ generates $\Omega_0$ as a free $\bA$-module.
\item[(b)] The function $E_\bu\on \in \TT^n$ and we have the following identity of functions in $\TT^n$
\[E_\bu\on(t) = \sum_{j=0}^\infty Q_j\left (d[\theta]\twistk{j} -   tI \right )\inv\bu\twistk{j}.\]
where $Q_i$ are the coefficients of $\Exp_\rho\on$ from \eqref{Exptensor}.
\item[(c)] The function $G_\bu\on$ extends to a meromorphic function on $U = (\C_\infty \times_{\F_q} E) \setminus \{ \infty \}$ with poles in each coordinate only at the points $\Xi\twisti$ for $i\geq 0$.
\item[(d)] The operators $D_t$ and $D_y$ acting on $G_\bu\on$ give
\begin{align*}
D_t(G_\bu\on) &= \Exp_\rho\on(d[\eta]\bu) + (y+c_1t + c_3)\Exp_\rho\on(\bu)\\
D_y(G_\bu\on) &= -c_1 \Exp_\rho\on(d[\eta] \bu) + \Exp_\rho\on(d[\theta^2] \bu)+ (t+c_2)\Exp_\rho\on(d[\theta] \bu)\\
&  + (t^2 + c_2 t + c_4) \Exp_\rho\on(\bu).
\end{align*}
\end{enumerate}
\end{proposition}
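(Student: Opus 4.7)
My plan is to prove each of the four assertions by exploiting the functional equation of $\omega_\rho$, the defining power series of $\Exp_\rho^{\otimes n}$, and direct computations on the vector-valued Anderson generating functions. For (a), I first verify $\omega_\rho^{(1)} = f\,\omega_\rho$ by reindexing $i \mapsto i+1$ in the product \eqref{omegarhoprod}; the leading $\xi^{1/(q-1)}$ factor twists to $\xi^{q/(q-1)}$, supplying exactly the missing $\xi^{q^0}$ factor and the missing $1/f^{(0)}$ after the shift. Raising to the $n$th power gives $(\omega_\rho^n)^{(1)} = f^n\,\omega_\rho^n$, so $\omega_\rho^n \in \Omega_0$. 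For the generation claim, given any $h \in \Omega_0$, the quotient $h / \omega_\rho^n$ is $\tau$-invariant in the fraction field of $\TT[y]$, so it lies in $\bK$. A pole/zero analysis on $\cU$ using the divisors from \eqref{fdivisor} and the defining data of $\BB$ shows the quotient is actually regular on $U$, hence lies in $\bA$, giving the free rank-$1$ structure $\Omega_0 \cong \bA \cdot \omega_\rho^n$.

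\textbf{Part (b).} Substituting the series $\Exp_\rho^{\otimes n}(\bw) = \sum_{j\geq 0} Q_j \bw^{(j)}$ into the definition \eqref{Eudef} of $E_\bu^{\otimes n}$ and interchanging the order of summation, the inner sum over $i$ collapses via a matrix geometric series to
\[
  \sum_{i=0}^\infty \bigl(d[\theta]^{(j)}\bigr)^{-(i+1)} t^i = \bigl(d[\theta]^{(j)} - tI\bigr)^{-1},
\]
using that Frobenius twisting is a ring homomorphism, so $(d[\theta]^{-i-1})^{(j)} = (d[\theta]^{(j)})^{-i-1}$. The interchange and convergence in $\TT^n$ are justified by the entireness of $\Exp_\rho^{\otimes n}$ and the rapid decay $|d[\theta]^{-i-1}\bu| \asymp |\theta|^{-(i+1)}|\bu|$, which yields absolute convergence on the closed unit disk in $t$.

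\textbf{Parts (c), (d), and the main obstacle.} For (c), the formula from (b) shows that $E_\bu^{\otimes n}$ extends meromorphically to the entire $t$-affine line with singularities only at the roots of $\det(d[\theta]^{(j)} - tI)$, i.e.\ at $t = \theta^{q^j}$ for $j \geq 0$. Plugging this into \eqref{Gudef} and invoking the Weierstrass equation \eqref{ecequation} produces a meromorphic extension of $G_\bu^{\otimes n}$ to $U$, with poles of $G_\bu^{\otimes n}$ lying only over these $t$-values; a leading-term analysis at each such point (comparing the coefficient vectors of $E_\bu^{\otimes n}$ and $E_{d[\eta]\bu}^{\otimes n}$ near the eigenvector of $d[\theta]^{(j)}$) determines that the pole actually sits at $\Xi^{(j)}$ rather than at $-\Xi^{(j)}$. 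I expect this precise identification of pole location to be the main technical obstacle, since distinguishing $\Xi^{(j)}$ from $-\Xi^{(j)}$ requires careful bookkeeping of the $y$-coordinate arising from the $(y + c_1 t + c_3)$ factor in \eqref{Gudef}. For (d), the identities are verified by direct computation: apply \eqref{taction}--\eqref{yaction} to the series defining $G_\bu^{\otimes n}$, use the Weierstrass relation to eliminate the $y^2$ term arising in $\rho_y^{\otimes n}(G_\bu^{\otimes n})$, and repackage the result using the series forms of $\Exp_\rho^{\otimes n}$ and $E_\bu^{\otimes n}$; the formulas for $D_t(G_\bu^{\otimes n})$ and $D_y(G_\bu^{\otimes n})$ then drop out after collecting terms.
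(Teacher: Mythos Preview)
This proposition is not proved in the present paper: it is a summary of results imported verbatim from \cite[\S 5--6]{GreenPeriods17}, and the paper offers no argument beyond the citation. Consequently there is no in-paper proof to compare your attempt against.

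That said, your sketches track the standard arguments one would expect in \cite{GreenPeriods17}. For (a), the computation $\omega_\rho^{(1)} = f\omega_\rho$ from the infinite product is routine, and the generation step via $\tau$-invariance of $h/\omega_\rho^n$ is the natural move; be a bit careful that the fixed ring of $\TT[y]$ under twisting is $\bA$ (not merely $\bK$), as the paper notes, and that the unit property $\omega_\rho \in \TT[y]^\times$ is stated only on the closed unit disk, so the regularity argument on all of $\cU$ needs the divisor of $\omega_\rho$ explicitly. For (b), the geometric-series-after-interchange argument is exactly the standard one. For (c) and (d), your description is accurate, and you correctly flag the delicate point: isolating the pole at $\Xi^{(j)}$ rather than $-\Xi^{(j)}$ requires tracking the $y$-dependence coming from the $(y+c_1 t + c_3)$ factor against the principal parts of the two $E$-series near $t=\theta^{q^j}$.
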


Define $\cM$ to be the submodule of $\TT[y]$ consisting of all elements in $\TT[y]$ which have a meromorphic continuation to all of $U$.  Now define the map $\RES_{\Xi}:\cM^n\to \C_\infty^n,$ for a vector of functions $(z_1,...,z_n)^\top\in \cM^n$ as
\begin{equation}\label{RESdef}
\RES_{\Xi}((z_1,\dots, z_n)^\top) = (\Res_{\Xi}(z_1\lambda),\dots, \Res_{\Xi}(z_n\lambda))^\top
\end{equation}
where $\lambda$ is the invariant differential of $E$ from \eqref{tableofsymbols}.

\begin{proposition}\label{P:ResidueReview}
We recall the following facts about the map $\RES_\Xi$ from \cite[\S 6]{GreenPeriods17}:
\begin{enumerate}
\item[(a)] $\RES_\Xi(G_\bu\on) =  -(u_1,\dots,u_n )^\top$
\item[(b)] If we denote $\Pi_n = -\RES_\Xi(T(\omega_\rho^n)),$ then $T(\omega_\rho^n) = G_{\Pi_n}\on$ and the period lattice of $\Exp_\rho\on$ equals $\Lambda_\rho\on = \{d[a]\Pi_n\mid a\in \bA\}$.
\item[(c)] If $\pi_\rho$ is a fundamental period of the exponential function $\exp_\rho$ from \eqref{Drinfeldexp}, and if we denote the last coordinate of $\Pi_n \in \C_\infty^n$ as $p_n$, then $p_n/\pi_\rho^n \in H$.
\end{enumerate}
\end{proposition}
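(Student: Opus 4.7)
The three parts are interrelated, and the residue map $\RES_\Xi$ is the main tool throughout. For part (a), the series expansion from Proposition \ref{P:EuReview}(b) shows that inside the disk of radius $|\theta|$, only the $j=0$ summand $(d[\theta]-tI)\inv\bu$ has a pole at $\Xi$. Writing $d[\theta]=\theta I + N_\theta$ with $N_\theta$ strictly upper triangular, this inverse expands as the finite sum $\sum_{k=0}^{n-1}(-1)^k(t-\theta)^{-k-1}N_\theta^k$. Substituting into $G_\bu\on = E_{d[\eta]\bu}\on + (y+c_1t+c_3)E_\bu\on$ and computing the residue coordinate-wise against $\lambda = dt/(2y+c_1t+c_3)$---expanding $y$ in the local parameter $t-\theta$ via the Weierstrass equation \eqref{ecequation}---the higher-order contributions from the powers of $N_\theta$ telescope with the Taylor expansion of $y+c_1t+c_3$ at $\Xi$ to give the clean answer $-(u_1,\dots,u_n)^\top$.

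For part (b), combining Propositions \ref{P:EuReview}(d) and \ref{P:OperatorReview}(a) turns $(G-E_1\tau)G_\bu\on=0$ into a polynomial identity in $t,y$; matching the $t^2$-coefficient shows this identity holds if and only if $\Exp_\rho\on(\bu)=0$ (the remaining coefficients then vanish automatically by the functional equation). Hence $\bu\mapsto G_\bu\on$ bijects $\Lambda_\rho\on$ with $\ker(G-E_1\tau)\cap\TT[y]^n$, and by Lemma \ref{L:GE1andOmega} the latter equals $T(\Omega_0)=T(\bA\omega_\rho^n)$. Composing the two bijections, each period $\bu$ corresponds to a unique $a\in\bA$ with $G_\bu\on=T(a\omega_\rho^n)$; part (a) then gives $-\bu = \RES_\Xi(T(a\omega_\rho^n))$. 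The remaining step---translating this residue into $-d[a]\Pi_n$---uses the structural identities of Proposition \ref{P:review}(e), namely $t\bg = d[\theta]\bg + E_\theta f^n\bg\twist$ and its $y$-analog, together with $f^n|_\Xi = 0$ to kill the $\tau$-contributions at $\Xi$. The case $a=1$ simultaneously yields $T(\omega_\rho^n) = G_{\Pi_n}\on$ and shows $\Pi_n \in \Lambda_\rho\on$; the general case gives $\Lambda_\rho\on = \{d[a]\Pi_n \mid a\in\bA\}$.

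For part (c), the last coordinate is $p_n = -\Res_\Xi(\omega_\rho^n g_n\lambda)$. From \eqref{omegarhoprod}, $\omega_\rho$ has a simple pole at $\Xi$ (from the $i=0$ factor $\xi/f$), hence $\omega_\rho^n$ has a pole of order $n$ there; by \eqref{gidivisor}, $g_n$ has a zero of order $n-1$ at $\Xi$, so $\omega_\rho^n g_n$ has a simple pole. Expanding in $t-\theta$, the residue factors as the $n$-th power of $\omega_\rho$'s leading Laurent coefficient, times the leading Taylor coefficient of $g_n$ at $\Xi$ (which lies in $H$ because $g_n\in H(t,y)$), times $1/(2\eta+c_1\theta+c_3)$. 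The analogous 1-dimensional residue identity from \cite{GP16} expresses $\pi_\rho$ in terms of the same leading coefficient of $\omega_\rho$ up to an explicit factor in $H$; raising to the $n$-th power and taking the ratio $p_n/\pi_\rho^n$ cancels this common factor and leaves only $H$-valued terms.

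\textbf{The main obstacle.} The most delicate step is the residue identity $\RES_\Xi(T(a\omega_\rho^n)) = -d[a]\Pi_n$ used in part (b). For $j<n$, the function $\omega_\rho^n g_j$ has a pole of order $n-j+1>1$ at $\Xi$, so multiplying by a general $a\in\bA$ and taking the residue genuinely involves higher-order Laurent coefficients of $a$ at $\Xi$, not merely its value there. Matching these contributions with the off-diagonal (nilpotent) entries of $d[a]$ requires careful use of Proposition \ref{P:review}(e) together with the vanishing of $f^n$ at $\Xi$, which eliminates the $\tau$-contributions that appear whenever multiplication by $t$ or $y$ pushes $g_j$ outside the $L$-basis $\{g_1,\dots,g_n\}$ of $M$.
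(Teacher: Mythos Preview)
This proposition is stated in the paper without proof---it is explicitly a review of results from \cite[\S 6]{GreenPeriods17}, so there is no argument in the present paper to compare your attempt against.

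That said, your sketch is a correct outline of how these facts are established. Your identification of the key step in part (b)---the identity $\RES_\Xi(T(a\omega_\rho^n)) = -d[a]\Pi_n$---and your strategy for it are exactly right: writing $t\bg = d[\theta]\bg + E_\theta f^n\bg\twist$ (and the $y$-analogue) from Proposition~\ref{P:review}(e), the $\tau$-term contributes no residue because $f^n\omega_\rho^n$ is regular at $\Xi$ (the order-$n$ zero of $f^n$ cancels the order-$n$ pole of $\omega_\rho^n$). One minor point in part (a): the word ``telescope'' is doing heavy lifting. The actual mechanism is that the principal part of $G_\bu\on$ at $\Xi$ involves $(d[\eta] + (y+c_1t+c_3)I)(d[\theta]-tI)^{-1}\bu$, and the combination $d[\eta] + (\eta+c_1\theta+c_3)I$ has diagonal part $(2\eta+c_1\theta+c_3)I$ which cancels against the $1/(2y+c_1t+c_3)$ in $\lambda$; the off-diagonal nilpotent contributions from $N_\eta$ and $N_\theta$ then pair off by a direct Taylor-expansion argument rather than by anything one would naturally call telescoping.
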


\section{Coefficients of the exponential function}\label{S:ExpCoef}
The coefficients of the exponential function for rank 1 sign-normalized Drinfeld modules are well understood (see \eqref{Drinfeldexp}).  Further, the coefficients for the exponential function of the $n$th tensor power of the Carlitz module are also well understood.  These coefficients were first studied by Anderson and Thakur in \cite[\S 2.2]{AndThak90}, and have recently been written down explicitly using hyper derivatives by Papanikolas in \cite[4.3.6]{PLogAlg}.  In this section we give explicit formulas for the coefficients of the exponential function for the $n$th tensor power of a rank 1 sign-normalized Drinfeld module.

In order to write down a formula for the coefficients of $\Exp_\rho\on$ we must first analyze certain functions which arise when calculating residues of the vector-valued Anderson generating functions $G_\bu\on$.  For a fixed dimension $n$, for $1\leq \ell\leq n$ and for $i\geq 0$, define the functions
\begin{equation}\label{gammadef}
\gamma_{i,\ell} = \frac{g_\ell}{(ff\twist \dots f\twistk{i-1})^n},
\end{equation}
where for $i=0$ we understand $\gamma_{0,\ell} = g_\ell$.  Using \eqref{fdivisor} and \eqref{gidivisor} we see that the polar part of the divisor of $\gamma_{i,\ell}$ equals
\[ -n(V\twisti)   -n(\Xi\twistk{i-1}) - n(\Xi\twistk{i-2}) - \dots - (n-(\ell-1))(\Xi). \]
We temporarily fix an index $\ell$.  Using the Riemann-Roch theorem, we observe that we can find unique functions $\alpha_{j,k}$ with $\tsgn(\alpha_{j,k})=1$ in each of the following 1-dimensional spaces, for $1\leq j\leq i$ and $1\leq k\leq n$,
\[\alpha_{j,k} \in \mathcal L(n(V\twisti) - n(\Xi\twisti) + k(\Xi\twistk{j-1}) +n(\Xi\twistk{j-2}) + \dots+ n(\Xi\twist) + n(\Xi)-(n(j-1)+k-1)(\infty)).\]
Then, for appropriate constants $d_{j,k}\in H$ we subtract off the principal part of the power series expansion of $g_\ell/(ff\twist \dots f\twistk{i-1})^n$ at $\Xi\twistk{m}$, for $1\leq m \leq j-1$, to find that
\[\gamma_{i,\ell} - \sum_{j,k} d_{j,k} \alpha_{j,k} \in \mathcal L\left (n(V\twisti)\right ) = \Span_{H}(g_1\twisti, g_2\twisti, \dots , g_n\twisti).\]
So for further constants $c_{\ell,1},\cdots, c_{\ell,n} \in H$ we can write
\begin{equation}\label{gammaexpression}
\gamma_{i,\ell} = c_{\ell,1}g_1\twisti +c_{\ell,2}g_2\twisti + \dots c_{\ell,n}g_n\twisti +  \sum_{j,k} d_{j,k} \alpha_{j,k},
\end{equation}
where we note that each of the functions $\alpha_{j,k}$ vanishes with order $n$ at $(\Xi\twisti)$ and that the coefficients $c_{\ell,k}$ are implicitly dependent on $i$.  To ease notation, for each $1\leq \ell\leq n$ we will write $\alpha_\ell := \sum_{j,k} d_{j,k} \alpha_{j,k}$ and denote
\begin{equation}\label{Ciequation}
\bgamma_i = (\gamma_{i,1}, \gamma_{i,2},\dots,\gamma_{i,n})^\top, \quad C_i = \langle c_{j,k}\rangle,\quad \text{and}\quad \balpha_i =(\alpha_1, \alpha_2,\dots,\alpha_n)^\top,
\end{equation}
so that we can write equation \eqref{gammaexpression} for $1\leq \ell\leq n$ as $\bgamma_i = C_i \bg\twisti + \balpha_i.$

\begin{theorem}\label{T:ExpCoef}
With the notation as above, for dimension $n\geq 2$ and $\bz\in \C_\infty$, if we write
\[\Exp_\rho\on(\bz) = \sum_{i=0}^\infty Q_i \bz\twisti,\]
then for $i\geq 0$, the exponential coefficients $Q_i = C_i$ and $Q_i\in \Mat_n(H)$.
\end{theorem}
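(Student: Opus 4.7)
The plan is to identify a matrix recursion that uniquely characterizes the exponential coefficients $Q_i$, then show the matrices $C_i$ satisfy the same recursion. I would begin by deriving the recursion for $Q_i$ from the Anderson functional equation $\Exp_\rho\on(d[\theta]\bz) = \rho\on_t(\Exp_\rho\on(\bz))$ in \eqref{expfunctionalequation}. Expanding both sides as power series in $\bz$, using $\rho\on_t = d[\theta] + E_\theta \tau$ from \eqref{taction}, and equating coefficients of $\bz\twisti$ yields
\[
Q_i\, d[\theta]\twisti - d[\theta]\, Q_i = E_\theta\, Q_{i-1}\twist, \qquad Q_0 = I.
\]
Writing $d[\theta] = \theta I + N_\theta$ with $N_\theta$ strictly upper triangular, the scalar contribution $(\theta^{q^i}-\theta)Q_i$ is invertible for $i \geq 1$ because $\theta$ is transcendental over $\F_q$; the residual nilpotent commutator $Q_i N_\theta\twisti - N_\theta Q_i$ can then be inverted by a finite Neumann series in the nilpotent part. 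This gives existence and uniqueness of $Q_i$ given $Q_{i-1}$, which I expect to package as Lemma~\ref{L:Recurrencelemma}.

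For the $C_i$ side, the crucial input is the identity $t\bg = d[\theta]\bg + E_\theta f^n\bg\twist$ in $H(t,y)^n$, which combines Proposition~\ref{P:review}(e) with the definition \eqref{taction} of $\rho\on_t$. Dividing by $(ff\twist\cdots f\twistk{i-1})^n$ produces the functional identity $t\bgamma_i = d[\theta]\bgamma_i + E_\theta\bgamma_{i-1}\twist$ for $i \geq 1$. Substituting the decompositions $\bgamma_j = C_j\bg\twistk{j} + \balpha_j$, together with the $i$-fold twist $t\bg\twisti = d[\theta]\twisti\bg\twisti + E_\theta\twisti(f\twisti)^n\bg\twistk{i+1}$, and rearranging, gives
\[
\bigl(C_i\,d[\theta]\twisti - d[\theta]C_i - E_\theta C_{i-1}\twist\bigr)\bg\twisti = (d[\theta]-tI)\balpha_i + E_\theta\balpha_{i-1}\twist - C_iE_\theta\twisti(f\twisti)^n\bg\twistk{i+1}.
\]
The base case $C_0 = I$ is immediate, since the index range $1 \leq j \leq 0$ in the definition of $\balpha_0$ is empty.

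The crux of the argument, and the main obstacle, is to show the right-hand side of the above displayed equation vanishes as an element of $\mathcal L(n(V\twisti))^n$, so that unique expansion in the basis $\{g_k\twisti\}_{k=1}^n$ forces $C_i\,d[\theta]\twisti - d[\theta]C_i = E_\theta C_{i-1}\twist$, matching the $Q_i$-recursion. Each of the three right-hand terms individually carries a pole of order up to $n$ at $V\twisti$---the $\balpha$-contributions because each $\alpha_{j,k}$ has a pole of order $\leq n$ at $V\twisti$ from its defining divisor, and the final term because $(f\twisti)^n$ has a pole of order exactly $n$ there. Careful principal-part accounting at $V\twisti$, built on the divisors of the $\alpha_{j,k}$ from Section~\ref{S:ExpCoef} and the shtuka relation $\divisor(f) = (V\twist) - (V) + (\Xi) - (\infty)$, is needed to show these contributions exactly cancel. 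Once this cancellation is established, the uniqueness from the first paragraph gives $C_i = Q_i$, and $Q_i \in \Mat_n(H)$ then follows automatically, since the decomposition $\bgamma_i = C_i\bg\twisti + \balpha_i$ takes place entirely over $H(t,y)$.
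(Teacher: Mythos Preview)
Your overall strategy---derive a recursion uniquely determining the $Q_i$, then show the $C_i$ satisfy it---matches the paper's, and your choice to use only the $t$-action yields a simpler recursion than the paper's (which instead uses the combined operator $\rho\on_y - (M_\tau + M_m)\rho\on_t$ from Proposition~\ref{P:OperatorReview}(c) and packages the result as Lemma~\ref{L:Recurrencelemma}). The uniqueness argument you sketch is correct, and the identity $t\bgamma_i = d[\theta]\bgamma_i + E_\theta \bgamma_{i-1}\twist$ is right.

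The genuine gap is in the ``crux'' step. You aim to show the matrix $M := C_i\,d[\theta]\twisti - d[\theta]C_i - E_\theta C_{i-1}\twist$ vanishes by principal-part accounting at $V\twisti$. This cannot work. Every basis function $g_j\twisti$ has pole order exactly $n$ at $V\twisti$, so there are many nonzero linear combinations $\sum c_j g_j\twisti$ that are regular at $V\twisti$ (indeed $1 \in \mathcal L(nV\twisti)$). Hence even if the $V\twisti$-poles on your right-hand side cancelled completely, you would only conclude that each row of $M\bg\twisti$ is a constant, not zero. Worse, there is no a priori reason to expect such cancellation: since your displayed identity is an equation of functions and the left-hand side $M\bg\twisti$ generically has a pole of order $n$ at $V\twisti$, you are attempting to establish at $V\twisti$ something equivalent to $M=0$, not a route to it.

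The correct point to analyze is $\Xi\twisti$. By construction each $\alpha_{j,k}$ lies in a Riemann--Roch space whose divisor contains $-n(\Xi\twisti)$, so $\balpha_i$ and $\balpha_{i-1}\twist$ both vanish to order at least $n$ at $\Xi\twisti$; and $C_i E_\theta\twisti (f\twisti)^n \bg\twistk{i+1}$ also vanishes to order at least $n$ there because $\ord_{\Xi\twisti}(f\twisti)=1$. Thus your entire right-hand side vanishes to order $\geq n$ at $\Xi\twisti$. The decisive structural fact---the one actually used in the paper---is that $\ord_{\Xi\twisti}(g_j\twisti) = j-1$ by \eqref{gidivisor}, so the $g_j\twisti$ have \emph{distinct} orders $0,1,\dots,n-1$ at $\Xi\twisti$, and any nonzero constant linear combination has order at most $n-1$ there. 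This forces $M=0$. The paper carries out exactly this $\Xi\twisti$-argument (see the paragraph after \eqref{secondterm}), albeit with its more elaborate recursion; your $t$-only recursion would succeed equally well once you redirect the vanishing analysis from $V\twisti$ to $\Xi\twisti$.
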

\begin{remark}
We remark that in the case for $n=1$, if one interprets the empty divisors in \eqref{gidivisor} correctly, then Theorem \ref{T:ExpCoef} still holds.  However, for clarity of exposition, we restrict to $n\geq 2$.
\end{remark}
Before giving the proof of Theorem \ref{T:ExpCoef}, we require a lemma about the coefficients of the exponential function.
\begin{lemma}\label{L:Recurrencelemma}
Given a sequence of matrices $Q_i\in \text{Mat}_n(H)$ for $i\geq 0$ with $Q_0=I$, then the $Q_i$ are the coefficients of $\Exp_\rho\on$ if and only if they satisfy the recurrence relation for $i\geq 1$
\begin{equation}\label{recurrence}
M_2 Q_{i-1}\twist + E_1 Q_{i-1}\twist d[\theta]\twisti = Q_{i} d[\eta]\twistk{i} - (N_1 + M_m)Q_{i} d[\theta]\twistk{i} - M_1Q_{i}.
\end{equation}
where $M_1$ and $M_2$ are defined in \eqref{Midef} and $M_m$ is defined in \eqref{Mm}.  Further, the coefficients $Q_i \in \Mat_n(H)$.
\end{lemma}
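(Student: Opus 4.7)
The plan is to derive the recurrence by combining the Anderson functional equations for $\Exp_\rho\on$ with the matrix identity of Proposition~\ref{P:OperatorReview}(c), and then to argue uniqueness by analyzing the resulting linear operator on $Q_i$.

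For the ``only if'' direction, I apply both sides of the identity $M_1 + M_2\tau = \rho_y\on - (M_\tau + M_m)\rho_t\on$ to the power series $\Exp_\rho\on(\bz) = \sum Q_i \bz\twisti$, using the functional equations~\eqref{expfunctionalequation} to replace each $\rho_a\on(\Exp_\rho\on(\bz))$ by $\Exp_\rho\on(d[a]\bz) = \sum Q_i d[a]\twisti \bz\twisti$. After expanding $M_\tau = N_1 + E_1\tau$ and applying the twisting rule $\tau M = M\twist \tau$, both sides become explicit power series in the twisted variables, and equating coefficients of $\bz\twisti$ for each $i \geq 1$ yields the recurrence~\eqref{recurrence}.

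For the ``if'' direction, the recurrence is a linear equation for $Q_i$ with right-hand side determined by $Q_{i-1}$. Matching $\tau^0$-terms in Proposition~\ref{P:OperatorReview}(c) gives the identity $M_1 = d[\eta] - (N_1+M_m)d[\theta]$, which lets me rewrite the operator acting on $Q_i$ as $\mathcal L_y - (N_1+M_m)\mathcal L_t$, where $\mathcal L_a(X) := X d[a]\twisti - d[a] X$ is a Sylvester operator. Since $d[\theta]$, $d[\eta]$, $N_1$ and $M_m$ are all upper triangular, the combined operator is triangular (in an appropriate ordering of matrix entries), and its diagonal action on the $(j,k)$-entry of $X$ is multiplication by the scalar
\[
s_{i,j} := (\eta^{q^i} - \eta) - (z_j - a_{j+1})(\theta^{q^i} - \theta).
\]
A degree count using Proposition~\ref{P:review}(e),(g) shows that $s_{i,j} \neq 0$ for all $i \geq 1$ and $1 \leq j \leq n$, so the operator is invertible and $Q_i$ is uniquely determined. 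Combined with the necessity direction, this establishes the equivalence; induction on $i$ then yields $Q_i \in \Mat_n(H)$, since $Q_0 = I$, the right-hand side of the recurrence lies in $\Mat_n(H)$ by the induction hypothesis together with $M_2, E_1, d[\theta]\twisti \in \Mat_n(H)$, and the inverse of the triangular operator preserves $\Mat_n(H)$ because its diagonal entries lie in $H^\times$.

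The main obstacle I anticipate is the non-vanishing verification for $s_{i,j}$: ruling out an accidental cancellation of the leading degree-$3q^i$ term of $\eta^{q^i} - \eta$ by $(z_j - a_{j+1})(\theta^{q^i} - \theta)$ requires a careful extraction of leading terms from the explicit formula $a_i = (2\eta + c_1\theta + c_3)/(\theta - t([i]V\twist + [n-i]V))$ and from the $y$-action of Proposition~\ref{P:review}(e), followed by a valuation comparison at the infinite place.
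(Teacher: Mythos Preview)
Your proposal is correct and follows essentially the same approach as the paper: derive the recurrence from the functional equation via Proposition~\ref{P:OperatorReview}(c), then establish uniqueness by showing the linear operator on $Q_i$ is invertible through a diagonal-plus-nilpotent (equivalently, triangular) decomposition. Your Sylvester-operator packaging and the identification $M_1 = d[\eta] - (N_1+M_m)d[\theta]$ make the triangular structure transparent, and your explicit flag of the non-vanishing of $s_{i,j}$ is actually more careful than the paper, which simply asserts that the corresponding diagonal matrix $M_D = \eta^{q^i}I - \theta^{q^i}M_m - M_d$ (whose diagonal entries are precisely your $s_{i,j}$) is invertible.
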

\begin{proof}
First note that by \eqref{expfunctionalequation}
\[(\rho\on_y - (M_\tau+M_m) \rho\on_t)(\Exp_\rho\on(z)) = \Exp_\rho\on(d[\eta] z) - (M_\tau + M_m)\Exp_\rho\on(d[\theta] z).\]
Then, using Proposition \ref{P:OperatorReview}(c),
\[(M_1 + M_2\tau)(\Exp_\rho\on(z)) = \Exp_\rho\on(d[\eta] z) - (M_\tau + M_m)\Exp_\rho\on(d[\theta] z),\]
and expanding $\Exp_\rho\on$ on both sides in terms of its coefficients $Q_i$ and equating like terms gives the equality
\[M_2 Q_{i-1}\twist + E_1 Q_{i-1}\twist d[\theta]\twisti = Q_{i} d[\eta]\twistk{i} - (N_1 + M_m)Q_{i} d[\theta]\twistk{i} - M_1Q_{i}.\]
Thus the coefficients of the exponential function satisfy the recurrence relation \eqref{recurrence}.  Next, for $j\geq 0$, let $\{Q_j'\}\subset \Mat_n(H)$ be a sequence of matrices satisfying recurrence relation \eqref{recurrence}.  We will show that $\{Q_j'\}$ is uniquely determined by $Q_0$, and thus if we fix $Q_0=I$, the matrices $\{Q_j'\}$ will be the coefficients of $\Exp_\rho\on$.  Given a term $Q_{i-1}'$ of the sequence $\{Q_j'\}$ for $i\geq 1$, define
\[W_i = M_2 (Q_{i-1}')\twist + E_1(Q_{i-1}')\twist d[\theta]\twisti,\]
so that by \eqref{recurrence}
\begin{equation}\label{Widef}
W_i = Q_{i}'d[\eta]\twisti - (M_m+N_1)Q_{i}'d[\theta]\twisti - M_1 Q_i'.
\end{equation}
Then, denote $M_1 = M_d + M_n,$ where $M_d$ is the diagonal part of $M_1$ and $M_n$ is the nilpotent (super-diagonal) part.  Then collect the diagonal and off-diagonal terms of \eqref{Widef} to obtain
\begin{equation}\label{Wiequation}
W_i = (\eta^{q^i} I - \theta^{q^i}M_m  - M_d)Q_i' + Q_i' N_\eta \twisti - \theta^{q^i}N_1 Q_i' - M_m Q_i' N_\theta \twisti - N_1 Q_i' N_\theta\twisti - M_n Q_i',
\end{equation}
where we recall the definition of $N_\theta$ and $N_\eta$ from \eqref{Nthetadef}.  Next, we denote the matrix $M_D = \eta^{q^i} I - \theta^{q^i}M_m  - M_d,$ and note that it is diagonal and invertible.  Define
\[\beta_i:\Mat_n(H)\to \Mat_n(H)\]
to be the $\F_q$-linear map given for $Y\in \Mat_n(H)$ by
\begin{equation}\label{betamap}
Y\mapsto M_D\inv(YN_\eta\twisti - \theta^{q^i} N_1 Y - M_m Y N_\theta \twisti - N_1YN_\theta\twisti - M_nY).
\end{equation}
Note that $\beta_i$ is a nilpotent map with order at most $2n-1$, since matrix in definition \eqref{betamap}, except $M_D$, is strictly upper triangular, and thus each term of $\beta_i^{2n-1}$ will have at least $n$ strictly upper triangular matrices on either the left or the right of each matrix $Y$.  Then, using the map $\beta_i$ and rearranging slightly we can rewrite \eqref{Wiequation} as
\begin{equation}\label{tel1}
Q_i'+\beta_i(Q_i') = M_D\inv W_i.
\end{equation}
Applying $\beta_i^j$ to $\eqref{tel1}$, multiplying by $(-1)^j$, then adding these together for $j\geq 1$ gives a telescoping sum.  Since $\beta_i$ is nilpotent with order at most $2n-1$, we find
\begin{equation}\label{Qiexpression}
Q_i' = \sum_{j=0}^{2n-1} (-1)^j\beta_i^j(M_D\inv W_i).
\end{equation}
Thus we have determined $Q_i'$ uniquely in terms of $Q_{i-1}'$, and so each element in the sequence $\{Q_j'\}$ is determined by $Q_0$.  If we require that $Q_0=I$, then the matrices $\{Q_j'\}$ are the coefficients of $\Exp_\rho\on$.  Further, since $M_D$ and each matrix in the definition of $\beta_i$ is in $\Mat_n(H)$, we see that the exponential function coefficients $Q_i \in \Mat_n(H)$.
\end{proof}

We now return to the proof of Theorem \eqref{T:ExpCoef}.
\begin{proof}[Proof of Theorem \eqref{T:ExpCoef}]
We first recall that $\gamma_{0,\ell} = g_\ell$ and hence by \eqref{gammaexpression} we have $C_0 = I=Q_0$, so that the theorem is true trivially for $i=0$.  We then show that the sequence of matrices $\{C_i\}$ satisfies the recurrence in Lemma \ref{L:Recurrencelemma} for $i\geq 1$.  First observe that by Proposition \ref{P:review}(e)
\begin{equation}\label{dthetaaction}
d[\theta]\bg = t\bg - f^n E_\theta \bg\twist \quad \text{and}\quad d[\eta]\bg = y\bg - f^nE_\eta \bg\twist,
\end{equation}
with $\bg$ defined as in \eqref{gvector}.  Using \eqref{dthetaaction}, we write
\begin{align*}\label{recurrencecal}
&\left (M_2 C_{i-1}\twist + E_1 C_{i-1}\twist d[\theta]\twisti - C_{i} d[\eta]\twistk{i} + (N_1 + M_m)C_{i} d[\theta]\twistk{i} + M_1C_{i}\right )\bg\twisti\\
& = \left (M_2 C_{i-1}\twist  + tE_1 C_{i-1}\twist  - yC_{i} + t(N_1 + M_m)C_{i}  + M_1C_{i}\right ) \bg\twisti  \\
&\quad -\left (E_1 C_{i-1}\twist E_\theta\twisti - C_{i} E_\eta\twisti + (N_1 + M_m)C_{i}  E_\theta\twisti \right  ) f^n \bg\twisti.
\end{align*}
We examine the first term in the right hand side of the above equation, which we denote
\begin{equation}\label{firsttermvanishes}
T_1 = \left (M_2 C_{i-1}\twist  + tE_1 C_{i-1}\twist  - yC_{i} + t(N_1 + M_m)C_{i}  + M_1C_{i}\right ) \bg\twisti,
\end{equation}
and the second term, which we denote
\begin{equation}\label{secondterm}
T_2 = \left (E_1 C_{i-1}\twist E_\theta\twisti - C_{i} E_\eta\twisti + (N_1 + M_m)C_{i}  E_\theta\twisti \right  ) f^n \bg\twisti,
\end{equation}
separately.  By the discussion immediately following \eqref{Ciequation} we see that \eqref{firsttermvanishes} equals
\begin{align*}
T_1 &= (M_2+tE_1)\bgamma_{i-1}\twist + (-yI + t(M_m + N_1)+ M_1)\bgamma_i + \balpha_{i-1}\twist + \balpha_i\\
&= M_2'\bgamma_{i-1}\twist + M_1'\bgamma_i + \balpha_{i-1}\twist + \balpha_i,
\end{align*}
with $M_1'$ and $M_2'$ as given in \eqref{M'def}.  Then, writing out the coordinates of $\bgamma$ using the functions $\gamma_{i,\ell}$ from \eqref{gammadef} and finding a common denominator gives
\[T_1 = \frac{1}{(ff\twist \dots f\twistk{i-1})^n}\left (M_1'\bg + M_2' f^n \bg\twist + \balpha_{i-1}\twist + \balpha_i\right ) = \frac{1}{(ff\twist \dots f\twistk{i-1})^n}\left ( \balpha_{i-1}\twist + \balpha_i\right ),\]
since $M_1'\bg + M_2' f^n \bg\twist=0$ by Proposition \ref{P:OperatorReview}(b).  Thus $T_1$ vanishes coordinate-wise with order at least $n$ at $\Xi\twisti$, because the functions $\alpha_{\ell}$ from \eqref{Ciequation} each vanish with order at least $n$ at $\Xi\twisti$.  Further, the presence of the factored-out $f^n\bg\twisti$ shows that $T_2$ from \eqref{secondterm} also vanishes coordinate-wise with order at least $n$ at $\Xi\twisti$.  Thus we see that
\[\left (M_2 C_{i-1}\twist + E_1 C_{i-1}\twist d[\theta]\twisti - C_{i} d[\eta]\twistk{i} + (N_1 + M_m)C_{i} d[\theta]\twistk{i} + M_1C_{i}\right )\bg\twisti\]
consists of a constant matrix in $\Mat_n(H)$ multiplied by $\bg\twisti$, and equals a vector of functions which vanishes coordinate-wise with order at least $n$ at $\Xi\twisti$.  However, recall from \eqref{gidivisor} that $\ord_{\Xi\twisti}(g_j\twisti) = j-1$, and thus
\[\left (M_2 C_{i-1}\twist + E_1 C_{i-1}\twist d[\theta]\twisti - C_{i} d[\eta]\twistk{i} + (N_1 + M_m)C_{i} d[\theta]\twistk{i} + M_1C_{i}\right ) = 0\]
identically, which proves that $\{C_i\}$ satisfies the recursion equation \eqref{recurrence} and proves the proposition.
\end{proof}

\begin{corollary}\label{C:Expcorallary}
For $z\in \C_\infty$ we have the formal expression
\[\Exp_\rho\on
\left (\begin{matrix}
z\\
0 \\
\vdots\\
 0
\end{matrix}\right )=
\left (\begin{matrix}
z\\
0 \\
\vdots\\
 0
\end{matrix}\right )+
\sum_{i=0}^\infty 
\frac{z^{q^i}}{g_1\twisti(ff\twist\dots f\twistk{i-1})^n}\cdot 
\left (\begin{matrix}
g_1\\
g_2 \\
\vdots\\
g_n
\end{matrix}\right )\Bigg|_{\Xi\twisti}.\]
\end{corollary}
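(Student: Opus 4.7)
The plan is to combine Theorem~\ref{T:ExpCoef}, which identifies the exponential coefficients $Q_i$ with the matrices $C_i$ defined by $\bgamma_i = C_i\bg\twisti + \balpha_i$, with the divisor and order-of-vanishing data for $g_j\twisti$ and $\alpha_\ell$ at the point $\Xi\twisti$. Since we evaluate $\Exp_\rho\on$ at $z\mathbf{e}_1$, only the first columns of the $Q_i = C_i$ contribute, and by \eqref{gammaexpression} this first column is $(c_{1,1}, c_{2,1}, \dots, c_{n,1})^\top$, where $c_{\ell,1}$ is the coefficient of $g_1\twisti$ in the expansion of $\gamma_{i,\ell} = g_\ell/(ff\twist \cdots f\twistk{i-1})^n$.

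First I would collect the regularity facts at $\Xi\twisti$ from \eqref{fdivisor} and \eqref{gidivisor}: the product $(ff\twist \cdots f\twistk{i-1})^n$ is finite and nonzero at $\Xi\twisti$, since its zeros occur at $\Xi\twistk{0},\dots,\Xi\twistk{i-1}$ and its finite poles at $V\twistk{0},\dots,V\twistk{i-1}$; the function $g_\ell$ has no pole at $\Xi\twisti$ for $i\geq 1$, its only finite pole being at $V$; the twisted function $g_k\twisti$ has a zero of exact order $k-1$ at $\Xi\twisti$, obtained by twisting \eqref{gidivisor}; and the error term $\alpha_\ell$ vanishes at $\Xi\twisti$ to order at least $n$ by the construction preceding \eqref{gammaexpression}.

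Then I would plug $\Xi\twisti$ into the expansion
\[
\gamma_{i,\ell} = c_{\ell,1}g_1\twisti + c_{\ell,2}g_2\twisti + \dots + c_{\ell,n}g_n\twisti + \alpha_\ell.
\]
On the right, $g_k\twisti|_{\Xi\twisti} = 0$ for $k \geq 2$ and $\alpha_\ell|_{\Xi\twisti} = 0$, so only the term $c_{\ell,1}\, g_1\twisti|_{\Xi\twisti}$ survives. Solving yields
\[
c_{\ell,1} = \frac{g_\ell|_{\Xi\twisti}}{g_1\twisti|_{\Xi\twisti}\cdot (ff\twist \cdots f\twistk{i-1})^n|_{\Xi\twisti}},
\]
which is exactly the $\ell$-th coordinate of the vector appearing in the corollary's sum. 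Assembling these entries over $\ell = 1,\dots,n$, multiplying by $z^{q^i}$, and summing over $i$ reproduces the claimed formula; the separate leading $z\mathbf{e}_1$ either matches the $i=0$ contribution $Q_0=I$ (the $i=0$ summand collapses to $z\mathbf{e}_1$ because $g_k|_\Xi = 0$ for $k\geq 2$) or is absorbed by starting the sum at $i=1$. I do not expect a serious obstacle here: the argument is a clean extraction of $c_{\ell,1}$ via divisor-theoretic evaluation at a single point, and the only bookkeeping is confirming the pole and zero orders at $\Xi\twisti$ from the data already recorded in \S\ref{S:Review}.
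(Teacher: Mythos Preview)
Your proposal is correct and follows essentially the same approach as the paper: evaluate \eqref{gammaexpression} at $\Xi\twisti$, use that $g_k\twisti$ vanishes there for $k\geq 2$ and that $\alpha_\ell$ vanishes to order at least $n$, and solve for $c_{\ell,1}$. The paper's proof is a single sentence to this effect; you have simply fleshed out the divisor bookkeeping that justifies the evaluation.
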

\begin{proof}
This follows from Theorem \ref{T:ExpCoef} by evaluating \eqref{gammaexpression} at $\Xi\twisti$, noticing that $g_j\twisti(\Xi\twisti)$ vanishes for $j\geq 2$, then solving for $c_{\ell,1}$.
\end{proof}
\begin{remark}
Theorem \ref{T:ExpCoef} and Corollary \ref{C:Expcorallary} should be considered generalizations Proposition 2.2.5 of \cite{AndThak90} and of the remark that follow it.
\end{remark}

\section{Coefficients of the logarithm function}\label{S:LogCoef}
The coefficients for the logarithm function associated to a rank 1 sign-normalized Drinfeld module were first studied by Anderson (see \cite[Prop. 0.3.8]{Thakur93}) and are described in \eqref{Drinfeldlog}.  The coefficients for the logarithm associated to the $n$th tensor power of the Carlitz module were studied by Anderson and Thakur, who give formulas for the lower right entry of these matrix coefficients in \cite[\S 2.1]{AndThak90}.  Recently, Papanikolas has written down explicit formulas using hyperderivatives in \cite[4.3.1 and Prop. 4.3.6(a)]{PLogAlg}.  In this section we develop new techniques to write down explicit formulas for the coefficients of the logarithm function $\Log_\rho\on$ associated to the $n$th tensor power of rank 1 sign-normalized Drinfeld modules.  Our method was inspired by ideas of Sinha from \cite{Sinha97} (see in particular his ``main diagram" in section 4.2.3).  However, where Sinha uses homological constructions to prove the commutativity of his diagram, we take a more direct approach using Anderson generating functions for ours.

For $g\in N = \Gamma(U,\mathcal O_E(-nV\twist))$ with $\deg(g) = mn + b$ with $0\leq b\leq q-1$, define the map
\[\varepsilon: N \to \C_\infty^n,\]
by writing $g$ in the $\sigma$-basis for $N$ described in Proposition \ref{P:review}(a),
\begin{equation}\label{gcoefficients}
g  = \sum_{i=0}^{m} \sum_{j=1}^n b_{j,i}\twistk{-i} (ff\twistinv \dots f\twistk{1-i})^n h_{n-j+1}\twistk{-i},
\end{equation}
where we denote $\bb_i = (b_{1,i},b_{2,i},\dots,b_{n,i})^\top$, and set
\begin{equation}\label{varepsdef}
\varepsilon(g) = \bb_0 + \bb_1 + \dots + \bb_m.
\end{equation}

We define the following diagram of maps, where we recall the definition of $\cM$ from \S \ref{S:Review} and of $\Omega$ from \eqref{Omegadef}
\begin{equation}\label{maindiagram}
\begin{diagram}
\Omega & \rTo^{\tau - f^n} & N & \rTo^{\varepsilon} & \C_\infty^n \\
\dTo^{T} & & & \ruTo_{\Exp_\rho\on} \\
\cM^n & \rTo^{-\RES_\Xi} & \C_\infty^n
\end{diagram}
\end{equation}
and where the maps $T$ and $\RES_\Xi$ are defined in \eqref{Tmapdef} and \eqref{RESdef} respectively.  We remark that using the operator $\tau - f^n$ one quickly sees that $\Omega \subset \cM$.

One of the main goals of this section is to prove that the diagram commutes.  Before we prove this, however, observe that if $\bu\in \C_\infty^n$ is not a period of $\Exp_\rho\on$, then $G_\bu\on \in \cM^n$ is not in the image of $T$ in diagram \ref{maindiagram}.  We require a preliminary result which allows us to modify $G_\bu\on$ to be in the image of $T$.  For $\bu \in \C_\infty^n$, write the coordinates of $G_\bu\on$ from \eqref{Gudef} as
\[G_\bu\on(t,y) = (k_1(t,y),k_2(t,y),\dots,k_n(t,y))^\top,\]
and then define the vector
\[\bk = (k_1([n]V), k_2(V\twist +[n-1]V),k_3([2]V\twist + [n-2]V),\dots, k_n([n-1]V\twist + V))^\top.\]
Next we define the vector valued function
\begin{equation}\label{Judef}
J_\bu\on := (j_1(t,y),j_2(t,y),\dots,j_n(t,y))^\top := G_\bu\on - \bk,
\end{equation}
and note that $j_k$ vanishes at the point $[k-1]V\twist+[n-k+1]V$.    Also denote
\[\bw := (w_1(t,y),w_2(t,y),\dots,w_n(t,y))^\top :=  (G-E_1\tau)(J_\bu\on) \in  \TT(y)^n,\]
where $G-E_1\tau$ is the operator defined in \eqref{GE1operator}, and let $\bz := \Exp_\rho\on(\bu)$ and denote its coordinates $\bz := (z_1,z_2,\dots,z_n)^\top$.
\begin{proposition}\label{P:bw}
The vector $\bw$ is in $H[t,y]^n$ and equals
\[\bw = 
\left ( \begin{matrix}
z_1\cdot(t-t(V\twist + [n-1]V))\\
z_2\cdot(t-t([2]V\twist + [n-2]V))\\
\vdots\\
z_{n-1}\cdot(t-t([n-1]V\twist + [1]V))\\
z_n\cdot(t-t([n]V\twist))
\end{matrix}\right).\]
\end{proposition}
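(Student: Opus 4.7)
The plan is to compute $\bw$ by direct calculation, splitting
\[
\bw = (G - E_1\tau)(G_\bu\on) - (G - E_1\tau)(\bk)
\]
and using two complementary tools: the operator identity $(G - E_1\tau) = M_\delta^{-1}(D_y - (M_\tau + M_m) D_t)$ from Proposition \ref{P:OperatorReview}(a), and the explicit polynomial formulas for $D_t(G_\bu\on)$ and $D_y(G_\bu\on)$ from Proposition \ref{P:EuReview}(d). For the first piece, I substitute these formulas and use the functional equation \eqref{expfunctionalequation} to replace each $\Exp_\rho\on(d[a]\bu)$ with $\rho_a\on(\bz)$ (for $a \in \{t,y,t^2\}$); this expresses $(G - E_1\tau)(G_\bu\on)$ as $M_\delta^{-1}$ times an explicit polynomial in $\C_\infty[t,y]^n$ depending on $\bz$, $\bz\twist$, and the constant matrices $d[\theta]$, $d[\eta]$, $E_\theta$, $E_\eta$, $M_\tau$, $M_m$. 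For the second piece, applying the same operator identity to the constant vector $\bk$, with $D_t(\bk) = (d[\theta] - tI)\bk + E_\theta\bk\twist$ and $D_y(\bk) = (d[\eta] - yI)\bk + E_\eta\bk\twist$, and using Proposition \ref{P:OperatorReview}(c), produces another $M_\delta^{-1}$ times a polynomial expression that is linear in $t$ and $y$.

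Combining the two pieces writes $\bw = M_\delta^{-1} \Phi$ for an explicit polynomial $\Phi \in \C_\infty[t,y]^n$. The crucial step is then to verify that each coordinate $\Phi_k$ is divisible by $\delta_k$, so that $\bw$ is in fact a polynomial vector. The cleanest way to see this divisibility is via the equivalent direct-formula expression $(G - E_1\tau)(J_\bu\on)_k = (g_{k+1}/g_k)j_k - j_{k+1}$ for $k<n$, and $(g_{n+1}/g_n)j_n - j_1\twist$ for $k=n$: the simple pole of $g_{k+1}/g_k = \nu_k/\delta_k$ at the point $[k-1]V\twist + [n-k+1]V$ is cancelled by the simple zero of $j_k$ there, which holds precisely because of the defining property $\bk_k = k_k([k-1]V\twist + [n-k+1]V)$; meanwhile $j_{k+1}$ (and similarly $j_1\twist$ in the last coordinate) is holomorphic at that point, since the only poles of $G_\bu\on$ lie at the points $\Xi\twisti$ by Proposition \ref{P:EuReview}(c).

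With $\bw$ established to be polynomial, I identify its form by computing its zeros and comparing degrees. From the direct formula, each $\bw_k$ vanishes at $[k]V\twist + [n-k]V$: the factor $g_{k+1}/g_k$ has a simple zero there by Proposition \ref{P:review}(c), and $j_{k+1}$ vanishes there by construction of $\bk_{k+1}$ (in the last coordinate, the vanishing of $j_1\twist$ at $[n]V\twist$ follows from the vanishing of $j_1$ at $[n]V$ upon Frobenius twisting). A degree count at $\infty$, using the divisor data \eqref{gidivisor} for the $g_i$ together with the polynomial structure coming from Proposition \ref{P:EuReview}(d), shows $\bw_k$ has $t$-degree at most one and no $y$-dependence; hence $\bw_k = c_k\cdot(t - t([k]V\twist + [n-k]V))$ for some $c_k \in \C_\infty$. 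Matching the leading $t$-coefficient against the polynomial $\Phi$ from the first stage then identifies $c_k = z_k$.

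The principal obstacle will be the algebraic bookkeeping in the final identification: tracking the interplay of $\rho_t\on$, $\rho_y\on$, $M_\tau + M_m$, and $M_\delta$ acting on $\bz$ and $\bz\twist$, and confirming the expected cancellations at the various points. The pole cancellations at $\Xi\twisti$ are essentially automatic from Proposition \ref{P:EuReview}(d), since $D_t$ and $D_y$ already send $G_\bu\on$ to polynomials; the cancellations at $[k-1]V\twist + [n-k+1]V$ are enforced by the very definition of $\bk$.
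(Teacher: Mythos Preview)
Your proposal is correct and follows essentially the same route as the paper's proof: both compute $(G-E_1\tau)(G_\bu\on)$ via Proposition~\ref{P:OperatorReview}(a) and Proposition~\ref{P:EuReview}(d) to see that $\bw$ is $M_\delta^{-1}$ times a polynomial, then use the coordinate-wise formula $w_k = (g_{k+1}/g_k)j_k - j_{k+1}$ together with the defining vanishing of $j_k$ at $[k-1]V\twist+[n-k+1]V$ to cancel the poles of $\delta_k^{-1}$, and finally pin down $w_k$ by its degree, its zero at $[k]V\twist+[n-k]V$, and its leading coefficient $z_k$. The only cosmetic differences are that the paper phrases the last step via $\tsgn$ and Riemann--Roch rather than a direct $t$-degree count, and you are a bit more explicit about the boundary case $k=n$ (where $j_{n+1}=j_1\twist$ vanishes at $[n]V\twist$ because $j_1$ vanishes at $[n]V$).
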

\begin{proof}
By Proposition \ref{P:OperatorReview}(a), Proposition \ref{P:EuReview}(d) and \eqref{expfunctionalequation} we write
\begin{align}\label{GEpolynomial}
\bw' :=& (w_1'(t,y),w_2'(t,y),\dots,w_n'(t,y))^\top :=  (G-E_1\tau)(G_\bu\on)\\
\nonumber=& M_\delta\inv \big [-c_1\rho\on_y(\bz) + \rho\on_{t^2}(\bz) + (t+c_2)\rho\on_t(\bz) \\
\nonumber&+ (t^2+c_2t + c_4)\bz - (M_\tau + M_m)(\rho\on_y(\bz) + (y+c_1t+c_3)\bz)\big ].
\end{align}
In particular, from the last line of the above equation we see that $\bw'$ is a vector of rational functions in the space $H(t,y)$.  Further, for each rational function $w_i'$, the highest degree term in the numerator is $z_k t^2$ and the highest degree term in the denominator is $t$ (coming from the matrix $M_\delta\inv$).  Thus each $w_i'$ is a rational function in $H(t,y)$ of degree 2 (recall the $\deg(t)=2$) with $\tsgn(w_i') = z_k$.  We also observe that
\[(G-E_1\tau)(\bk)\in H(t,y)\]
and that each coordinate has degree 1.  This implies that each $w_i$ is in $H(t,y)$ and has degree 2 with $\tsgn (w_i)=z_k$.  Writing out the action of $G-E_1\tau$ on the coordinates of $J_\bu\on$ we obtain equations for $1\leq m\leq n$
\begin{equation}\label{jequations}
j_m \frac{g_{m+1}}{g_m} - j_{m+1} = w_m.
\end{equation}
From \eqref{GE1operator}, Proposition \ref{P:review}(d) and \eqref{GEpolynomial} we see that the only points at which $w_k$ might have poles are the zeros of $\delta_k$, namely the points
\[[k-1]V\twist + [n - k+1]V \quad\text{and}\quad [-(k-1)]V\twist - [n - k+1]V.\]
We remark that this shows that the coordinates of $\bw$ are regular at $\Xi\twisti$ for $i\geq 0$, even though the coordinates of $J_u\on$ themselves have poles at $\Xi\twisti$.  Recall from Proposition \ref{P:EuReview}(c) that the only poles of $j_k$ occur at $\infty$ and $\Xi\twisti$ for $i\geq 0$ and from \eqref{Judef} that $j_k$ vanishes at $[k-1]V\twist+[n-k+1]V$, while from Proposition \ref{P:review}(c) we observe that $g_{k+1}/g_k$ is regular away from infinity except for a simple pole at $[k-1]V\twist+[n-k+1]V$.  Therefore, the equations in \eqref{jequations} show that each coordinate $w_k$ is regular at the points $[k-1]V\twist + [n - k+1]V$ and $[-(k-1)]V\twist - [n - k+1]V$.  Thus, the coordinates $w_k$, being rational functions of degree 2 in $H(t,y)$, which are regular away from $\infty$, are actually in $H[t,y]$.  Further, we see from $\eqref{jequations}$ that each function $w_k$ vanishes at the point $[k]V\twist + [n - k]V$.  Since we know that $\tsgn(w_i)=z_k$, and since we've identified one of the zeros of $w_i$, we find using the Riemann-Roch theorem that $w_k = z_k(t-t([k]V\twist + [n - k]V).$
\end{proof}

\begin{theorem}\label{T:diagramcommutes}
Diagram \eqref{maindiagram} commutes.  In other words, for $h\in \Omega$, if we let
\[(\tau-f^n)(h) = g\in N\]
and let $-\RES_\Xi(T(h)) = \bu$, then we have $\varepsilon(g) = \Exp_\rho\on(\bu).$
\end{theorem}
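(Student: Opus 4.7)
The plan is to prove commutativity of diagram~\eqref{maindiagram} in two stages: first establish a clean operator identity relating $(\tau - f^n)$ on $\Omega$ to $(G - E_1\tau)$ on $\cM^n$ via the map $T$, and then compare with Proposition~\ref{P:bw} to extract $\Exp_\rho\on(\bu)$ in a form that can be matched against $\varepsilon(g)$ through the $\sigma$-basis expansion of $g$.

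First I will verify the identity
\[(G - E_1\tau)(T(h)) \;=\; -g\, g_1\twist\, \be_n\]
by a short direct computation. Unpacking $G - E_1\tau = \text{diag}(g_2/g_1,\dots,g_{n+1}/g_n) - N_1 - E_1\tau$ from \eqref{GE1operator} and applying it to $T(h) = h\bg$, the diagonal and $-N_1$ contributions cancel in the first $n-1$ coordinates, while in the $n$-th coordinate the identity $g_{n+1} = f^n g_1\twist$ from~\eqref{gihigher} combined with $h\twist = f^n h + g$ collapses $hg_{n+1} - h\twist g_1\twist$ to $-g\, g_1\twist$.

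Next I will bring in the Anderson generating function. Setting $\bu = -\RES_\Xi T(h)$, Proposition~\ref{P:EuReview}(a) gives $\RES_\Xi G_\bu\on = -\bu$, so $T(h) - G_\bu\on \in \cM^n$ has vanishing residue at $\Xi$. By Proposition~\ref{P:bw}, $(G - E_1\tau)(J_\bu\on) = \bw$ has $k$-th coordinate $z_k(t - t([k]V\twist + [n-k]V))$, where $\bz = \Exp_\rho\on(\bu)$. Since $\bk$ is a constant vector and hence $(G - E_1\tau)(\bk)$ is an explicit element of $H(t,y)^n$, I can recover $(G - E_1\tau)(G_\bu\on) = \bw + (G - E_1\tau)(\bk)$ and subtract from the previous identity to obtain the explicit relation
\[(G - E_1\tau)\bigl(T(h) - G_\bu\on\bigr) \;=\; -g\, g_1\twist\, \be_n \;-\; \bw \;-\; (G - E_1\tau)(\bk),\]
which ties $\bz$ directly to $g$.

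The final step will be to match the resulting description of $\bz$ with $\varepsilon(g) = \sum_i \bb_i$. Expanding $g$ in the $\sigma$-basis via~\eqref{gcoefficients}, each piece $\sigma^i(b_{j,i} h_{n-j+1})$, upon multiplication by $g_1\twist$ and application of the product identity $g_{\ell+1} h_{n-\ell+1} = f^n(t - t([\ell]V\twist + [n-\ell]V))$ from Proposition~\ref{P:review}(b), contributes a polynomial piece that aligns with one coordinate of $\bw$ under the identification with $\bb_i$. The telescoping expansion
\[h = -\sum_{i \geq 0} \frac{g\twisti}{(ff\twist\cdots f\twisti)^n},\]
obtained by iterating $h\twist = f^n h + g$ (with tail vanishing thanks to the rigid-analytic growth condition defining $h \in \BB$), organizes these contributions level-by-level in $i$, so that summing over $i$ reproduces the full vector $\varepsilon(g)$. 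The hardest step will be controlling the residual $T(h) - G_\bu\on$: although its residue at $\Xi$ vanishes, it still carries non-trivial higher-order Laurent data and has pole structure at every point $\Xi\twisti$ for $i \geq 0$. Handling these infinitely many pole contributions simultaneously — precisely so that the full series $\sum_i \bb_i$ emerges rather than a finite truncation — is where the iterative/telescoping expansion of $h$ plays its central role, and the cancellations arranged by Proposition~\ref{P:review}(b) between the twisted shtuka products $(ff\twist\cdots f\twisti)^n$ and the $\sigma$-powers of $g$ form the combinatorial heart of the argument.
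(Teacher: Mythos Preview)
Your Step~1 is correct and matches the paper's computation~\eqref{GE1ofT}--\eqref{GEh}. The gap is in everything after. Subtracting $G_\bu\on$ from $T(h)$ and applying $G-E_1\tau$ gives a nonzero right-hand side, so $T(h)-G_\bu\on$ is \emph{not} in $\ker(G-E_1\tau)$ and Lemma~\ref{L:GE1andOmega} does not apply; as the paper notes just before Proposition~\ref{P:bw}, $G_\bu\on$ is not in the image of $T$ unless $\bu$ is a period, so a direct comparison cannot close. Your telescoping expansion $h=-\sum_{i\ge 0} g\twisti/(ff\twist\cdots f\twisti)^n$ is not justified either: iterating $h=(h\twist-g)/f^n$ leaves a tail $h\twistk{m+1}/(ff\twist\cdots f\twistk{m})^n$ whose vanishing in $\lVert\cdot\rVert_\theta$ depends on the size of the coefficients of $h$, over which you have no control (Lemma~\ref{L:convergence} establishes this convergence only for small data). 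And even granting convergence, your last paragraph identifies the hard part---the pole structure at every $\Xi\twisti$---but offers no mechanism beyond a gesture at Proposition~\ref{P:review}(b). (Minor: the residue identity you cite is Proposition~\ref{P:ResidueReview}(a), not \ref{P:EuReview}(a).)

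The paper's approach supplies exactly the missing mechanism. Instead of the single unknown $G_\bu\on$, one chooses for each block $\bb_i$ in the $\sigma$-expansion~\eqref{gcoefficients} any $\bu_i$ with $\Exp_\rho\on(\bu_i)=\bb_i$, and then corrects $G_{\bu_i}\on$ (via~\eqref{Judef} and~\eqref{Ibudef}) to a vector $I_{\bu_i}\on=T(j_1/g_1)$ that \emph{is} in the image of $T$. Proposition~\ref{P:review}(b) then makes $(G-E_1\tau)(I_{\bu_i}\on)$ equal to $g_1\twist(b_{n,i}h_1+\cdots+b_{1,i}h_n)\,\be_n$, precisely the $i=0$ block of $g\,g_1\twist$; the diagonal shift operators $R_k$ produce the higher $\sigma$-levels. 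Summing the \emph{finitely many} pieces into $\mathbf{I}$ forces $T(h)-\mathbf{I}\in\ker(G-E_1\tau)$, so by Lemma~\ref{L:GE1andOmega} it equals $T(h')$ for some $h'\in\Omega_0$, hence $b\,G_{\Pi_n}\on$ for some $b\in\bA$ by Proposition~\ref{P:EuReview}(a) and Proposition~\ref{P:ResidueReview}(b). Taking $\RES_\Xi$ and then $\Exp_\rho\on$ finishes. The key idea you are missing is the passage $G_{\bu_i}\on\rightsquigarrow I_{\bu_i}\on$, which converts the problem from an intractable residue comparison into a finite kernel computation.
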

\begin{proof}
First observe that the case for $n=1$ is proved in Theorem 5.1 of \cite{GP16}.  For the rest of the proof, assume $n\geq 2$.  Write $\deg(g) = mn + b$ with $0\leq b\leq q-1$, and for $0\leq i\leq m$ let $\bu_i$ be any element in $\C_\infty$ such that
\begin{equation}\label{uidef}
\Exp_\rho\on(\bu_i) = \bb_i,
\end{equation}
where $\bb_i$ is defined for $g\in H$ in \eqref{gcoefficients}.  The main method for the proof of Theorem \ref{T:diagramcommutes} is to write $T(h)$ in terms of Anderson generating functions.  To do this we compare the result of $T(h)$ under the $G-E_1\tau$ operator with the result of $J_{\bu_i}\on$ under $G-E_1\tau$ for $0\leq i\leq m$.

By the definition \eqref{GE1operator} we see that for any $\gamma \in \C_\infty(t,y)$
\begin{equation}\label{GE1ofT}
(G-E_1\tau)(T(\gamma)) = (0,\dots,0,g_1\twist(f^n \gamma - \gamma\twist))^\top.
\end{equation}
Since $f^n h - h\twist =  g$, using the notation of \eqref{gcoefficients} we can write
\begin{equation}\label{GEh}
(G-E_1\tau)(T(h)) = (0,\dots,0,g_1\twist\sum_{i=0}^{m} \sum_{j=1}^n b_{j,n-i+1}\twistk{-i} (ff\twistinv \dots f\twistk{1-j})^n h_{i}\twistk{-j})^\top.
\end{equation}

Next, we analyze $(G-E_1\tau)(J_{\bu_i})$ for $0\leq i\leq m$.  For the equations in $\eqref{jequations}$, if we set $i=1$, then we can solve for $j_2$.  We then substitute that into the equation for $i=2$, then solve that for $j_3$, and so on to get equations for $2\leq m\leq n$
\begin{equation}\label{j1equation}
j_1 \frac{g_{m+1}}{g_1} - j_{m+1}= w_m + w_{m-1}\frac{g_{m+1}}{g_n} + w_{n-2}\frac{g_{m+1}}{g_{n-1}} + \dots + w_1\frac{g_{m+1}}{g_2},
\end{equation}
where we understand $j_{n+1} = j_1\twist$.  We note that the functions $j_k$ and $w_k$ depend implicitly on $\bu_i$.  Using these equations we find that
\begin{equation}\label{Ibudef}
J_{\bu_i}\on + \left (0,w_1,w_2 + w_1\frac{g_3}{g_2},\dots,w_{n-1} + w_{n-2}\frac{g_n}{g_2} + \dots + w_1\frac{g_n}{g_{n-1}}\right )^\top =T\left ( j_1/g_1\right ).
\end{equation}
In general we will call $I_{\bu_i}\on := T\left ( j_1/g_1\right )$, noting the implicit dependence on $\bu_i$. Then by \eqref{GE1ofT} and by \eqref{j1equation} with $m=n$ we find
\begin{equation}\label{GE1eq1}
(G-E_1\tau)(I_{\bu_i}\on) = \left (0,\dots,0,w_n + w_{n-1}\frac{g_1\twist f^n}{g_n} + w_{n-2}\frac{g_1\twist f^n}{g_{n-1}} + \dots + w_1\frac{g_1\twist f^n}{g_2}\right )^\top.
\end{equation}
Denote the entry in the $n$th coordinate of the last equation as
\[\ell_{\bu_i}:= w_n + w_{n-1}\frac{g_1\twist f^n}{g_n} + w_{n-2}\frac{g_1\twist f^n}{g_{n-1}} + \dots + w_1\frac{g_1\twist f^n}{g_2},\]
so that we can restate \eqref{j1equation} with $m=n$ as
\begin{equation}\label{ellbu}
\frac{ j_1g_1\twist f^n}{g_1}=  j_1\twist+ \ell_{\bu_i}.
\end{equation}
Observe then by Proposition \ref{P:review}(b) and by Proposition \ref{P:bw} for $1\leq k\leq n$ that
\[w_{n-k+1}\frac{g_1\twist f^n}{g_{n-k+2}} = b_{n-k+1,i}g_1\twist h_{k},\]
so \eqref{GE1eq1} becomes
\[(G-E_1\tau)(I_{\bu_i}\on) = \left (0,\dots,0,g_1\twist(b_{n,i}h_1 + b_{n-1,i}h_2 + \dots + b_{1,i}h_n)\right )^\top\]
For the vector $\bu_i$ from \eqref{uidef}, denote
\begin{equation}\label{hbidef}
h_{\bu_i} = b_{n,i}h_1 + b_{n-1,i}h_2 + \dots + b_{1,i}h_n,
\end{equation}
and notice that $\ell_{\bu_i} = g_1\twist h_{\bu_i}.$  Specializing the above discussion to $i=0$, we see that the $n$th coordinate of $(G-E_1\tau)(I_{\bu_0}\on)$ matches up with the first $n$ terms of the $n$th coordinate of $(G-E_1\tau)(T(h))$ from \eqref{GEh}.

In general for $i>0$ we find that
\[(f\twistinv f\twistk{-2} \dots f\twistk{-k})^n \text{diag}\left (\frac{g_1}{g_1\twistk{-k}}, \dots, \frac{g_n}{g_n\twistk{-k}}\right )(I_{\bu_i}\on)\twistk{-k}
=
 T\left (\left (\frac{(ff\twist \dots f\twistk{k-1})^n j_1}{g_1}\right )\twistk{-k}\right ),\]
and to ease notation, for $k\geq 1$ let us denote the matrix
\[R_k := (f\twistinv f\twistk{-2} \dots f\twistk{-k})^n \text{diag}\left (\frac{g_1}{g_1\twistk{-k}}, \dots, \frac{g_n}{g_n\twistk{-k}}\right ).\]
Then we use \eqref{ellbu} $k$ times and apply the fact that $T$ is linear to obtain
\begin{align}\label{Rkequation}
\begin{split}
R_k (I_{\bu_i}\on)\twistk{-k} &= T\left (\left (\frac{ (ff\twist \dots f\twistk{k})^n j_1}{ g_1}\right )\twistk{-k}\right )\\
&= I_{\bu_i}\on +T\left ( \frac{\ell_{\bu_i} \twistk{-1}}{g_1}\right ) + \dots + T\left (\frac{(f\twistk{2-k} \dots f\twistk{-1})^n\ell_{\bu_i}\twistk{1-k}}{g_1\twistk{2-k}}\right ) + T\left (\frac{(f\twist \dots f\twistk{-1})^n \ell_{\bu_i}\twistk{-k}}{g_1\twistk{1-k}}\right ).
\end{split}
\end{align}
Then, if we let the operator $(G-E_1\tau)$ act on $R_k (I_{\bu_i}\on)\twistk{-k}$, applying \eqref{GE1ofT} to the last line of \eqref{Rkequation} we obtain a telescoping sum, and find that
 \[(G-E_1\tau) (R_k (I_{\bu_i}\on)\twistk{-k}) = \left (0,\dots,0,g_1\twist (ff\twistinv \dots f\twistk{1-k})^n h_{\bu_i}\twistk{-k}\right )^\top,\]
for $h_{\bu_i}$ defined in \eqref{hbidef}.  Note again that the terms in the last coordinate of the above vector are exactly the $in+1$ through $(i+1)n$ terms of the last coordinate of \eqref{GEh}.

Also, note that each term in the last line in \eqref{Rkequation} is coordinate-wise regular at $\Xi$ except $I_{\bu_i}\on$, so
\[\RES_\Xi(R_k (I_{\bu_i}\on)\twistk{-k}) = \RES_\Xi(I_{\bu_i}\on).\]
Then, recalling that each function $w_k$ and each quotient $j_{k+m}/j_k$ for $1\leq k,m\leq n$ is regular at $\Xi$, using definitions \eqref{Judef} and \eqref{Ibudef} together with Proposition \ref{P:ResidueReview}(a) we see that
\begin{equation}\label{Iresidue}
\RES_\Xi(I_{\bu_i}\on) = \RES_\Xi(J_{\bu_i}\on)= \RES_\Xi(G_{\bu_i}\on) = -\bu_i.
\end{equation}

Next, define
\[\textbf{I} = I_{\bu_0}\on + R_1 I_{\bu_1}\on + \dots + R_m I_{\bu_m}\on,\]
and observe by the above discussion that
\[(G-E_1\tau)(T(h) - \textbf{I}) = 0.\]
Further, for $h'\in \Omega$, by Lemma \ref{L:GE1andOmega} $(G-E_1\tau)(T(h')) = 0$ if and only if $h'\in \Omega_0$.  Since $\textbf{I}$ is the sum of elements in the image of the map $T$, we see that $T(h) - \textbf{I}$ is itself in the image of the map $T$.  Thus there is some $h'\in \Omega_0$ such that $T(h') = T(h) - \textbf{I}$.  Then, Proposition \ref{P:EuReview}(a) together with Proposition \ref{P:ResidueReview}(b) implies that for some $b\in \F_q[t,y]$
\[T(h) - \textbf{I}= T(h') = bG_{\Pi_n}\on.\]
Finally, by \eqref{Iresidue}, we calculate that
\[ \bu = -\RES_\Xi(T(h)) = -\RES_\Xi(\textbf{I} + bG_{\Pi_n}\on) = \bu_0 + \dots + \bu_m + b\Pi_n,\]
and thus by \eqref{varepsdef} and \eqref{uidef} we obtain
\[\Exp_\rho\on(\bu) = \Exp_\rho\on( \bu_0 + \dots + \bu_m + b\Pi_n) = \bb_0 + \dots + \bb_m =  \varepsilon(g).\]
\end{proof}

Having proven that diagram \eqref{maindiagram} commutes, we now apply the maps from the diagram to write down formulas for the coefficients of $\Log_\rho\on$.  First, for $d_j\in \C_\infty$ define the function 
\begin{equation}\label{cdef}
c(t,y) = d_nh_1+\dots +d_1h_n \in N \subset \bA,
\end{equation}
where $h_j$ are from Proposition \ref{P:review}(a).  Then define the formal sum
\begin{equation}\label{Bfunction}
B(t,y;\bd) = -\sum_{i=0}^\infty \frac{c\twisti}{(ff\twist f\twistk{2} \dots f\twisti)^n}
\end{equation}
for the vector $\bd = (d_1,\dots d_n)^\top \in \C_\infty^n$.  We remark that $B(t,y;\bd)$ is similar to the function $L_\alpha(t)$ defined by Papanikolas in \cite[\S 6.1]{P08}.
\begin{lemma}\label{L:convergence}
There exists a constant $C_0>0$ such that for $|d_j|\leq C_0$, the function $B$ is a rigid analytic function in $\Gamma(\cU,\cO_E(n(\Xi)))$, the space of rigid analytic functions on $\cU$ with at most a pole of order $n$ at $\Xi$.
\end{lemma}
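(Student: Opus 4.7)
The plan is to prove convergence of the series defining $B$ after multiplying by $(t-\theta)^n$ to cancel the prospective pole of order $n$ at $\Xi$, and then to estimate the resulting terms in the Gauss norm $\lVert\cdot\rVert_\theta$ on the Tate algebra $\TT_\theta[y] = \cO(\cU)$.

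First, a divisor computation shows that each term $R_i := c\twisti/(f f\twist \cdots f\twisti)^n$ lies in $\Gamma(\cU, \cO_E(n(\Xi)))$: by \eqref{fdivisor}, $f$ has on $\cU$ a simple pole at $V$ and a simple zero at $\Xi$, while for every $j \geq 1$ the twist $f\twistk{j}$ has all of its zeros and poles at points with $t$-coordinate of absolute value $q^{2q^j} > |\theta|$, hence is a unit in $\cO(\cU)$. Since $c \in N$ is regular on $U$, the only singularity of $R_i$ on $\cU$ is a pole of order $n$ at $\Xi$. Because $\divisor(t-\theta) = (\Xi) + (-\Xi) - 2(\infty)$, the product $(t-\theta)^n R_i$ lies in $\TT_\theta[y]$ and vanishes to order $n$ at $-\Xi$; moreover, it admits the factorization
\begin{equation*}
(t-\theta)^n R_i = \Bigl(\tfrac{t-\theta}{f}\Bigr)^n \cdot c\twisti \cdot \prod_{j=1}^{i} \Bigl(\tfrac{1}{f\twistk{j}}\Bigr)^n,
\end{equation*}
with each factor in $\TT_\theta[y]$.

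The main step is the Gauss-norm estimate of each factor; I use submultiplicativity of $\lVert\cdot\rVert_\theta$ on $\TT_\theta[y]$ throughout. Set $C_1 := \lVert (t-\theta)/f \rVert_\theta$ and $C_h := \max_{1 \leq j \leq n} \lVert h_j \rVert_\theta$. The inequality $\lVert g\twist \rVert_\theta \leq \lVert g \rVert_\theta^q$, which follows from $|\theta|, |\eta| \geq 1$, together with $c = d_n h_1 + \cdots + d_1 h_n$, gives $\lVert c\twisti \rVert_\theta \leq (|\bd| C_h)^{q^i}$. The crucial estimate concerns $\lVert 1/f\twistk{j} \rVert_\theta$ for $j \geq 1$. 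Using $f = \nu/\delta$ with $\nu = -mt + y + (m\theta-\eta)$ and $\delta = t-\alpha$, and noting $|m| = q^q$, $|m\theta - \eta| = q^{q+2}$, $|\alpha| = q^2$, the ultrametric triangle inequality on $\cU$ (where $|t| \leq q^2$ and $|y| \leq q^3$) shows that for every $P \in \cU$ and $j \geq 1$ the dominant coefficient strictly dominates the others, giving
\begin{equation*}
|\delta\twistk{j}(P)| = q^{2q^j}, \qquad |\nu\twistk{j}(P)| = q^{(q+2)q^j}.
\end{equation*}
Hence $|f\twistk{j}|$ is the constant $q^{q^{j+1}}$ on all of $\cU$, so $\lVert 1/f\twistk{j} \rVert_\theta = q^{-q^{j+1}}$. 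Combining, for $i \geq 0$,
\begin{equation*}
\lVert (t-\theta)^n R_i \rVert_\theta \leq C_1^n (|\bd| C_h)^{q^i} q^{-n(q^2 + q^3 + \cdots + q^{i+1})} = C_1^n q^{n q^2/(q-1)} \Bigl(\tfrac{|\bd| C_h}{q^{n q^2/(q-1)}}\Bigr)^{q^i}.
\end{equation*}

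Choosing $C_0 := q^{n q^2/(q-1)}/(2 C_h)$, for $|d_j| \leq C_0$ the Gauss norms decay at least as fast as $2^{-q^i}$, and so $F := \sum_{i \geq 0}(t-\theta)^n R_i$ converges in $\TT_\theta[y]$. The function $B = F/(t-\theta)^n$ is then rigid meromorphic on $\cU$ with at most poles of order $n$ at $\Xi$ and $-\Xi$; since each partial sum vanishes to order $\geq n$ at $-\Xi$ and this is a closed condition in $\TT_\theta[y]$, the limit $F$ also vanishes to order $\geq n$ at $-\Xi$, and therefore $B$ is regular at $-\Xi$. This yields $B \in \Gamma(\cU, \cO_E(n(\Xi)))$. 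The main obstacle is establishing the uniform formula $|f\twistk{j}| \equiv q^{q^{j+1}}$ on $\cU$ for $j \geq 1$; once this doubly exponential decay of $\lVert 1/f\twistk{j} \rVert_\theta$ is in hand, the singly exponential growth $(|\bd| C_h)^{q^i}$ is easily overwhelmed.
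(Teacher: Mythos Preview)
Your proof is correct and follows essentially the same approach as the paper: the decisive estimate in both is that $\lvert f^{(j)}\rvert$ is the constant $q^{q^{j+1}}$ on $\cU$ for $j\geq 1$, which gives doubly exponential decay of $\lVert 1/f^{(j)}\rVert_\theta$ and hence convergence of the tail series in $\TT_\theta[y]$. The only cosmetic difference is in isolating the pole at $\Xi$: the paper factors $B=-\dfrac{1}{f^n}\sum_{i\geq 0}\dfrac{c^{(i)}}{(f^{(1)}\cdots f^{(i)})^n}$ and uses that $V^{(1)}\notin\cU$ so that $1/f^n\in\Gamma(\cU,\cO_E(n(\Xi)))$, whereas you clear the pole with $(t-\theta)^n$ and then argue away the spurious pole at $-\Xi$ via the closedness of the vanishing ideal; both routes reach the same conclusion with the same core estimate.
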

\begin{proof}
Using \eqref{fdef} together with the facts that $\deg(\theta) = \deg(\alpha) = 2$, $\deg(\eta) = 3$ and $\deg(m) = q$, for $k\geq 1$ we find that $f\twistk{k} \in \TT_\theta[y]$ and
\[\left \lVert f\twistk{k} \right \rVert_\theta = q^{q^{k+1}}.\]
This implies that
\begin{equation}\label{Bdegestimate}
\left \lVert \frac{c(t,y)\twisti}{f\twist \dots f\twisti} \right \rVert_\theta = \left \lVert c(t,y) \right \rVert_\theta^{q^i}\cdot q^{\left (-n (q^{i+2} - q^2)/(q-1)\right )}.
\end{equation}
Since each $h_i\in \bA$, we see that $\rVert h_i \rVert_\theta$ is finite, and thus we can choose $C_0>0$ small enough such that for all $d_j\in \C_\infty$ with $|d_j|\leq C_0$ the norm
\[\left \lVert \frac{c(t,y)\twisti}{f\twist \dots f\twisti} \right \rVert_\theta \to 0\]
as $i\to \infty$.  This guarantees that for such $d_j$, the function
\[\sum_{i=0}^\infty \frac{c\twisti}{(f\twist f\twistk{2} \dots f\twisti)^n} \in \TT_\theta[y].\]
To finish the proof, we simply note that
\[B = -\frac{1}{f^n}\cdot \sum_{i=0}^\infty \frac{c\twisti}{(f\twist f\twistk{2} \dots f\twisti)^n}.\]
\end{proof}

\begin{theorem}\label{T:LogCoef}
For $\bz\in \C_\infty^n$ inside the radius of convegence of $\Log_\rho\on$, if we write
\[
\Log_\rho\on(\bz) = \sum_{i=0}^\infty P_i \bz\twisti,
\]
for $n\geq 2$, then for $\lambda$ the invariant differential defined in \eqref{tableofsymbols}
\begin{equation}\label{Piresidue}
P_i = \left \langle \Res_\Xi\left (\frac{g_j h_{n-k+1}\twisti}{(ff\twist \dots f\twisti)^n}\lambda\right )\right  \rangle_{1\leq j,k\leq n}
\end{equation}
and $P_i \in \Mat_n(H)$ for $i\geq 0$.
\end{theorem}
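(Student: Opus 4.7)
The plan is to apply Theorem \ref{T:diagramcommutes} with the auxiliary function $B = B(t,y;\bd)$ from \eqref{Bfunction} as input, producing a closed-form expression for $\Log_\rho^{\otimes n}(\bd)$ as residues at $\Xi$. A direct telescoping computation using \eqref{Bfunction} gives
\[
(\tau - f^n)(B) \;=\; B^{(1)} - f^n B \;=\; c,
\]
where $c = \sum_{k=1}^n d_k h_{n-k+1} \in N$ is the element from \eqref{cdef}. Combined with Lemma \ref{L:convergence} and a short divisor check controlling the vanishing of $B$ at $V$ inside $\cU$ (using the divisor of each $h_j$ at $V^{(1)}$ from Proposition \ref{P:review}(a) and \eqref{hidivisor}), this places $B$ in $\Omega$ provided $\bd$ has sufficiently small entries. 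Moreover, reading off the $\sigma$-basis expansion \eqref{gcoefficients} of $c$ shows that only the $i=0$ layer is present with $\bb_0 = \bd$, so $\varepsilon(c) = \bd$.

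Applying Theorem \ref{T:diagramcommutes} to $h = B$ therefore yields $\Exp_\rho^{\otimes n}(-\RES_\Xi(T(B))) = \bd$. After shrinking $\bd$ further if necessary so that it lies in the radius of convergence $r_L$ of $\Log_\rho^{\otimes n}$, one inverts to obtain
\[
\Log_\rho^{\otimes n}(\bd) \;=\; -\RES_\Xi(T(B)).
\]
The next step is to expand $T(B) = (Bg_1,\dots,Bg_n)^\top$ via \eqref{Bfunction} together with $c^{(i)} = \sum_{k=1}^n d_k^{q^i} h_{n-k+1}^{(i)}$, which renders the $j$-th coordinate of the right-hand side as
\[
\sum_{i=0}^\infty \sum_{k=1}^n d_k^{q^i}\,\Res_\Xi\!\left(\frac{g_j\, h_{n-k+1}^{(i)}}{(ff^{(1)}\cdots f^{(i)})^n}\,\lambda\right).
\]
Matching coefficients of $d_k^{q^i}$ against $\sum_{i\geq 0} P_i \bd^{(i)}$, which is legitimate because two convergent $\tau$-series in $\bd$ agreeing on a nonempty polydisc have equal coefficients, produces the desired formula for $(P_i)_{j,k}$.

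Finally, the rationality statement $P_i \in \Mat_n(H)$ is automatic: the functions $g_j,\, h_{n-k+1},\, f$ and the invariant differential $\lambda$ are all defined over $H$, while $\Xi \in E(K) \subset E(H)$, so each residue of an $H$-rational meromorphic differential at an $H$-rational point lies in $H$. I expect the principal technical obstacle to be verifying cleanly that $B$ genuinely belongs to $\Omega$ rather than to the slightly larger space $\Gamma(\cU,\cO_E(n(\Xi)))$ of Lemma \ref{L:convergence}; this amounts to pinning down the order of vanishing of $B$ at $V$, which is forced by the identity $(\tau - f^n)(B) = c$ together with the divisor of $c$ computed from Proposition \ref{P:review}. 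Once that is in hand, the rest of the argument reduces to the already-established commutativity of diagram \eqref{maindiagram} and a routine bookkeeping of coefficients.
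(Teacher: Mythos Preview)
Your proposal is correct and follows essentially the same route as the paper: feed $B(t,y;\bd)$ into the commutative diagram via Theorem~\ref{T:diagramcommutes}, obtain $\Exp_\rho^{\otimes n}(-\RES_\Xi(T(B))) = \bd$, reinterpret $-\RES_\Xi(T(B))$ as an $\F_q$-linear power series in $\bd$, and identify it with $\Log_\rho^{\otimes n}$ on a small polydisc before reading off coefficients. Your flagged concern about verifying $B \in \Omega$ (rather than merely in $\Gamma(\cU,\cO_E(n(\Xi)))$) is well taken---the paper asserts this in one line from $(\tau - f^n)(B) = c$, and your suggested divisor argument (forcing the order-$n$ vanishing at $V$ from that of $c$ at $V^{(1)}$ and the pole of $f^n$ at $V$) is the right way to make it precise.
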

\begin{remark}
As for Theorem \ref{T:ExpCoef}, we remark that the above theorem holds for $n=1$, but again for ease of exposition in the proof we restrict to the case of $n\geq 2$.
\end{remark}
\begin{proof}
One quickly observes from the definition of $B$, that $(\tau-f^n)(B) = c(t,y)$, and thus $B\in \Omega$.  Denote $\bu := -\RES_\Xi(T(B)),$ so that by Theorem \ref{T:diagramcommutes} combined with the definition of the map $\varepsilon$ in \eqref{varepsdef} and \eqref{cdef}
\[\Exp_\rho\on(\bu) = \varepsilon(c(t,y)) = (d_1,d_2,\dots,d_n)^\top.\]
We wish to switch our viewpoint to thinking about $-\RES_\Xi(T(B))$ as a vector-valued function with input $(d_1,\dots,d_n)^\top\, ,|d_i|<C_0$, where $C_0$ is the constant defined in Lemma \ref{L:convergence}.  For $D_{0}$ the hyper-disk in $\C_\infty^n$ of radius $C_0$, we define $\tilde{B}:D_{0}\to \C_\infty^n,$ for $\bd\in D_0$, as
\[\tilde B(\bd) = -\RES_\Xi(T(B(t,y;\bd)).\]
From the above discussion, we find that
\[\Exp_\rho\on \circ\, \tilde B: D_0\to \C_\infty^n\]
is the identity function.  Writing out the definition for $\tilde B$ gives
\begin{equation}
\tilde B = 
-\left (\begin{matrix}
\Res_\Xi(Bg_1\lambda)\\
\vdots\\
\Res_\Xi(Bg_n\lambda)
\end{matrix}\right )
=
\left (\begin{matrix}
\Res_\Xi(\sum_{i=0}^\infty \sum_{j=1}^n \frac{(d_jh_{n-j+1})\twisti}{(ff\twist f\twistk{2} \dots f\twisti)^n}g_1\lambda)\\
\vdots\\
\Res_\Xi(\sum_{i=0}^\infty \sum_{j=1}^n \frac{(d_jh_{n-j+1})\twisti}{(ff\twist f\twistk{2} \dots f\twisti)^n}g_n\lambda)
\end{matrix}\right ),
\end{equation}
which we can express as an $\F_q$-linear power series with matrix coefficients
\[
\tilde B = \sum_{i=0}^\infty \left \langle \Res_\Xi\left (\frac{g_j h_{n-k+1}\twisti}{(ff\twist \dots f\twisti)^n}\lambda\right )\right  \rangle_{1\leq j,k\leq n}
\left (\begin{matrix}
d_1\\
\vdots\\
d_n
\end{matrix}\right )\twisti.
\]
We conclude that $\Exp_\rho\on\circ \,\tilde B$ is an $\F_q$-linear power series which as a function on $D_0$ is the identity.  Recall that $\Log_\rho\on$ is the functional inverse of $\Exp_\rho\on$ on the disk with radius $r_L$.  Thus, on the disk with radius $\min(C_0,r_L)$ we have the functional identity
\[
\tilde B = \Log_\rho\on.
\]
Comparing the coefficients of the above expression, and recalling that $f$, $g_i$ and $h_i$ are defined over $H$ finishes the proof.
\end{proof}

\begin{corollary}\label{C:LogCoefbottomrow}
For the coefficients $P_i$ for $i\geq 0$ of the function $\Log_\rho\on$, the bottom row of $P_i$ can be written as
\begin{equation}\label{bottomrow}
\left \langle \frac{h_{n-k+1}\twisti}{h_1(f\twist \dots f\twisti)^n}\bigg|_\Xi \right \rangle_{1\leq k\leq n}.
\end{equation}
\end{corollary}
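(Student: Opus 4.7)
My proposal is to start from Theorem~\ref{T:LogCoef}, which expresses the $k$th entry of the bottom row of $P_i$ as
\[
\Res_\Xi\!\left(\frac{g_n\, h_{n-k+1}\twisti}{(ff\twist\cdots f\twisti)^n}\,\lambda\right), \qquad 1\le k\le n,
\]
and to pull out of the residue the factor $h_{n-k+1}\twisti/(f\twist\cdots f\twisti)^n$, which is regular at $\Xi$ for every $i\ge 0$ (the divisors of $h_{n-k+1}\twisti$ and of $f\twistk{j}$ for $j\ge 1$ place no pole at $\Xi$). Since $g_n\lambda/f^n$ has exactly a simple pole at $\Xi$ (order $(n-1)-n=-1$), such factoring is legitimate and reduces the proof to the single scalar identity
\[
\Res_\Xi\!\left(\frac{g_n}{f^n}\,\lambda\right) = \frac{1}{h_1(\Xi)}.
\]
In the degenerate subcase $i=0$, $k<n$, both sides of the claimed corollary vanish automatically: $h_{n-k+1}$ has a zero of order $n-k$ at $\Xi$, and a direct divisor count shows the full form has no pole there, so the reduction remains valid.

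To evaluate this scalar residue I would combine Proposition~\ref{P:review}(d) with the relation $g_{n+1} = \tau(g_1) = f^n g_1\twist$ to write
\[
\frac{g_n}{f^n} \;=\; \frac{g_n\, g_1\twist}{g_{n+1}} \;=\; \frac{g_1\twist\,\delta_n}{\nu_n},
\]
whence
\[
\Res_\Xi\!\left(\frac{g_n}{f^n}\,\lambda\right) = g_1\twist(\Xi)\,\delta_n(\Xi)\,\Res_\Xi\!\left(\frac{\lambda}{\nu_n}\right).
\]
The function $\nu_n = y-\eta-m_n(t-\theta)$ is the Weierstrass line through the collinear triple $\Xi$, $[n]V\twist$, and $\tau_n := -(\Xi+[n]V\twist)$. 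Substituting $y=\eta+m_n(t-\theta)$ into \eqref{ecequation} produces a monic cubic in $t$ whose three roots are the $t$-coordinates of these points, and a short local calculation with parameter $\pi=t-\theta$ (equivalently, the standard residue formula for a line on an elliptic curve) yields
\[
\Res_\Xi\!\left(\frac{\lambda}{\nu_n}\right) = \frac{1}{(\theta-t([n]V\twist))\,(\theta-t(\tau_n))}.
\]

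The crucial geometric identification is that of $t(\tau_n)$ with the zero of $\delta_n$. From \eqref{Vequation} we have $\Xi = V-V\twist$, and so $\tau_n = -(V+[n-1]V\twist)$; because $t(P)=t(-P)$ on a Weierstrass model, $t(\tau_n)=t([n-1]V\twist+V)$, which by the definition of $\delta_n$ in Proposition~\ref{P:review}(d) means $\theta-t(\tau_n) = \delta_n(\Xi)$. This makes the factor $\delta_n(\Xi)$ cancel, leaving
\[
\Res_\Xi\!\left(\frac{g_n}{f^n}\,\lambda\right) \;=\; \frac{g_1\twist(\Xi)}{\theta-t([n]V\twist)}.
\]
To conclude, I would twist the identity $g_1 h_1\twistinv = t-t([n]V)$ of Proposition~\ref{P:review}(b) to obtain $g_1\twist h_1 = t-t([n]V\twist)$; evaluating at $\Xi$ identifies the right-hand side above with $1/h_1(\Xi)$, which combined with the reduction from the first paragraph produces the formula of the corollary.

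I expect the main obstacle to be the combined residue-and-geometry step, namely computing $\Res_\Xi(\lambda/\nu_n)$ and matching $t(\tau_n)$ with the zero of $\delta_n$ through the elliptic-curve addition law. Both pieces are classical on Weierstrass curves but require careful bookkeeping with the Frobenius twist and with negatives of points on $E$. Once those ingredients are in hand, the remaining steps are essentially mechanical manipulations of the identities collected in Proposition~\ref{P:review}.
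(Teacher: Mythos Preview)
Your proposal is correct and follows the same overall strategy as the paper: factor out the part of the integrand regular at $\Xi$, reduce everything to the single scalar identity $\Res_\Xi(g_n\lambda/f^n)=1/h_1(\Xi)$, and then verify that identity via the function relations in Proposition~\ref{P:review}.

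The only real difference is in how that scalar residue is computed. The paper uses Proposition~\ref{P:review}(b) with $j=n-1$ to write $g_n/f^n=\delta_n/h_2$, then matches divisors to get $\delta_n/h_2=-(\nu_n\circ[-1])/(h_1(t-\theta))$, and reads off the residue directly from the simple pole of $1/(t-\theta)$. You instead use $g_{n+1}=f^n g_1\twist$ together with Proposition~\ref{P:review}(d) to write $g_n/f^n=g_1\twist\delta_n/\nu_n$, compute $\Res_\Xi(\lambda/\nu_n)$ via the three collinear points $\Xi,\,[n]V\twist,\,-(\Xi+[n]V\twist)$ on the line $\nu_n=0$, and close with the twisted identity $g_1\twist h_1=t-t([n]V\twist)$. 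Both chains are short and rely on the same stock of facts; yours makes the elliptic addition law (and the use of $\Xi=V-V\twist$) more visible, while the paper's hides that geometry inside a single divisor comparison. Neither has an advantage in length or generality.
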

\begin{proof}
Recall from \eqref{gidivisor} and \eqref{hidivisor} that $\ord_\Xi(g_j)=\ord_\Xi(h_j) = j-1$ and from \eqref{fdivisor} that $\ord_\Xi(f) = 1$.  This implies that, for $i=0$, each coordinate of the bottom row of the matrix \eqref{Piresidue} is regular at $\Xi$ except the last coordinate, which equals
\[\Res\left (\frac{g_nh_1}{f^n}\lambda \right ) = h_1(\Xi)\cdot \Res_\Xi\left (\frac{g_n}{f^n}\lambda \right ) .\]
Using various facts from Proposition \ref{P:review}, and observing that $h_1$ is regular at $\Xi$ and that $t-\theta$ is a uniformizer at $\Xi$, a short calculation gives
\[\Res_\Xi\left (\frac{g_n}{f^n}\lambda\right ) = \Res_\Xi\left (\frac{\delta_n}{h_2}\lambda\right ) = \Res_\Xi\left (-\frac{\nu_n\circ[-1]}{h_1(t-\theta)}\lambda\right ) = -\frac{\nu_n(-\Xi)}{h_1(\Xi)} \cdot \frac{1}{2 \eta + c_1\theta + c_3},\]
where $[-1]:E\to E$ denotes negation on $E$.  Finally, one calculates from the definition of $\nu_n$ from Proposition \ref{P:review}(d) that $\nu_n(-\Xi) = -2 \eta - c_1\theta - c_3$, which implies that
\begin{equation}\label{gnoverfn}
\Res_\Xi\left (\frac{g_n}{f^n}\lambda\right ) =  \frac{1}{h_1(\Xi)}.
\end{equation}
Thus, for $i=0$, the bottom row of \eqref{Piresidue} equals $(0,\dots,0,1)$, which is the bottom row of $Q_0=I$.

Then, for $i\geq 1$ note that the only functions in the bottom row of \eqref{Piresidue} which have zeros or poles at $\Xi$ are $g_n$ and $f^n$, and that the quotient $g_n/f^n$ has a simple pole at $\Xi$, thus
\[\Res_\Xi\left (\frac{g_n h_{n-k+1}\twisti}{(ff\twist \dots f\twisti)^n}\lambda\right ) = \frac{ h_{n-k+1}\twisti}{(f\twist \dots f\twisti)^n}\bigg|_\Xi \Res_\Xi\left (\frac{g_n}{f^n}\lambda\right ),\]
which completes the proof using \eqref{gnoverfn}.
\end{proof}

\begin{remark}
Theorem \ref{T:LogCoef} and Corollary \ref{C:LogCoefbottomrow} should be compared with the middle and last equalities in \eqref{Drinfeldlog}, respectively.
\end{remark}

\section{Zeta values}\label{S:ZetaValues}
In \cite{AndThak90}, Anderson and Thakur analyze the lower right coordinate of the coefficient $P_i$ of the logarithm function for tensor powers of the Carlitz module to obtain formulas similar to the ones we have provided in \S \ref{S:LogCoef}.  They then define a polylogarithm function and use their formulas to relate this to zeta values,
\[\zeta(n) = \sum_{\substack{a\in \F_q[\theta] \\ \sgn(a) = 1}} \frac{1}{a^n},\]
for all $n\geq 1$.  In this section, we prove a similar theorem for tensor powers of Drinfeld $\bA$-modules, but at the present it is unclear how to generalize the special polynomials which Anderson and Thakur used in their proof (the now eponymous Anderson-Thakur polynomials) to tensor powers of $\bA$-modules, and so we developed new techniques.  Presently, we only consider values of $n \leq q-1$ because these allow us to appeal to formulas from \cite{GP16}.

\begin{remark}\label{R:largezetavalues}
We remark that Pellarin, Angles, Ribeiro and Perkins develope a multivariable version of $L$-series in \cite{AnglesPellarin14}-\cite{AnglesPellarinRibeiro16}, \cite{PellarinPerkins16}, \cite{Perkins14a} and that it is possible that such considerations could enable one to obtain formulas for all zeta values; this is an area of ongoing study.
\end{remark}

To define a zeta function for a rank 1 sign-normalized Drinfeld module $\rho : \bA \to H[\tau]$, we first define the left ideal of $H[\tau]$ for an ideal $\fa \subseteq A$ by
\[
  J_{\fa} = \langle \rho_{\oa} \mid a \in \fa \rangle \subseteq H[\tau],
\]
where we recall that $\oa = \chi(a)$ from \S \ref{S:Background}.  Since $H[\tau]$ is a left principal ideal domain~\cite[Cor.~1.6.3]{Goss}, there is a unique monic generator $\rho_\fa\in J_\fa$, and we define $\partial(\rho_\fa)$ to be the constant term of $\rho_\fa$ with respect to $\tau$.  Let $\phi_{\fa} \in \Gal(H/K)$ denote the Artin automorphism associated to $\fa$, and let the $B$ be the integral closure of $A$ in $H$.  We define the zeta function associated to $\rho$ twisted by the parameter $b\in B$ to be
\begin{equation}\label{zetafunc}
\zeta_\rho(b;s) := \sum_{\fa \subseteq A} \frac{b^{\phi_\fa}}{\pd(\rho_\fa)^s},
\end{equation}

\begin{theorem}\label{T:ZetaValues}
For $b\in B$ and for $n\leq q-1$, there exists a vector $(*,\dots,*,C\zeta_\rho(b;n))^\top \in \C_\infty^n$ such that
\[\bd := \Exp_\rho\on\left (\begin{matrix}
*\\
\vdots\\
*\\
C\zeta_\rho(b;n)
\end{matrix}\right ) \in H^n,\]
where $C = \frac{(-1)^{n+1} h_1(-\Xi)}{\theta-t([n]V\twist)} \in H$.
\end{theorem}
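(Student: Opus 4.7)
The strategy is to apply the commutative diagram \eqref{maindiagram} to a carefully constructed $h \in \Omega$ built from Hayes' extension of $\rho$ to ideals $\fa \subseteq A$. Choose $h$ so that $\varepsilon((\tau - f^n)(h))$ is an explicit vector $\bd \in H^n$ and the bottom coordinate of $-\RES_\Xi(T(h))$ is $C\,\zeta_\rho(b;n)$; by Theorem \ref{T:diagramcommutes}, $\Exp_\rho\on$ applied to the latter then yields $\bd$, which is what the theorem claims.

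\textbf{Building $h$.} For each ideal $\fa \subseteq A$ of degree $d_\fa$, Hayes' theory produces $\rho_\fa \in H[\tau]$ of $\tau$-degree $d_\fa$ and constant term $\partial(\rho_\fa) \in H$. For each $\fa$, form a building block $B_\fa$ of the shape \eqref{Bfunction} whose ``input'' $c_\fa \in N$ is essentially $b^{\phi_\fa} \sigma^{d_\fa}(h_n)/\partial(\rho_\fa)^n$, chosen so that different $\fa$'s sit at different levels of the $\sigma$-basis of $N$ from Proposition \ref{P:review}(a). Set $h = -\sum_{\fa \subseteq A} B_\fa$. Since $\lvert \partial(\rho_\fa) \rvert = q^{d_\fa}$ and there are $O(q^{d_\fa})$ ideals of degree $d_\fa$, the estimate \eqref{Bdegestimate} from Lemma \ref{L:convergence} guarantees convergence of $h$ in $\TT_\theta[y]$ for $n \leq q-1$, placing $h$ in $\Omega$. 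This is precisely where the hypothesis $n \leq q-1$ enters, consistent with Remark \ref{R:largezetavalues}.

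\textbf{Applying the diagram.} On the $\varepsilon$ side, $(\tau - f^n)(h) = \sum_\fa c_\fa$ has the $\fa$-contribution at level $d_\fa$ of the $\sigma$-basis, so the level-$i$ coefficient is the finite sum $\sum_{d_\fa = i} b^{\phi_\fa}/\partial(\rho_\fa)^n$, which is Galois-invariant and hence in $H$; the formal sum then determines $\bd \in H^n$ via $\varepsilon$ (in a completed sense), with $\bd$ explicitly computable. On the residue side, the bottom coordinate of $-\RES_\Xi(T(h))$ equals $-\sum_\fa \Res_\Xi(g_n B_\fa \lambda)$. Mimicking the residue argument from Corollary \ref{C:LogCoefbottomrow}, and using $g_1^{(1)} h_1 = t - t([n]V\twist)$ from Proposition \ref{P:review}(b) together with $\nu_n(-\Xi) = -2\eta - c_1\theta - c_3$ (from the proof of Corollary \ref{C:LogCoefbottomrow} and Proposition \ref{P:review}(d)), each summand evaluates to $C \cdot b^{\phi_\fa}/\partial(\rho_\fa)^n$ with $C = (-1)^{n+1} h_1(-\Xi)/(\theta - t([n]V\twist))$. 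Summing over $\fa$ yields $C\,\zeta_\rho(b;n)$, as desired.

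\textbf{Main obstacle.} The most delicate step is the residue computation producing the precise constant $C$ with its specific sign $(-1)^{n+1}$ and factor $h_1(-\Xi)$: the negation on $E$ enters via the $\sigma$-twists built into $N$, and one must carefully track orders of vanishing of $g_n$, $f$, and the twisted $h_n^{(-d_\fa)}$ near $\Xi$ to ensure that the $\partial(\rho_\fa)^n$ in the denominator survives intact through the residue while the other factors collapse into $C$. A secondary challenge is making rigorous sense of $\varepsilon$ applied to the infinite formal sum $\sum_\fa c_\fa$ so that the resulting $\bd$ genuinely lies in $H^n$; this uses the integrality of $b \in B$, the $n \leq q-1$ convergence estimate, and Galois equivariance of the Hayes extension, which together ensure each level's sum over Galois conjugates lands in $H$.
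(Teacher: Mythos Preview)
Your plan to invoke the commutative diagram \eqref{maindiagram} is natural, but the construction of $h$ has a fatal flaw: your $h = -\sum_{\fa \subseteq A} B_\fa$ is \emph{not} in $\Omega$. By definition, membership in $\Omega$ requires $(\tau - f^n)(h) \in N = \Gamma(U, \cO_E(-nV\twist))$, a space of \emph{polynomial} functions on $U$. But your $(\tau - f^n)(h) = -\sum_\fa c_\fa$ is an infinite sum of elements $c_\fa \propto \sigma^{d_\fa}(h_n)$ sitting at arbitrarily high $\sigma$-levels, hence of unbounded degree; it is a genuine power series, not an element of $N$. Consequently Theorem \ref{T:diagramcommutes} does not apply, and your appeal to ``$\varepsilon$ in a completed sense'' is doing real work you have not justified. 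Even granting convergence of the vector sum, an infinite $\C_\infty$-limit of elements of $H$ has no reason to lie in $H^n$ --- indeed $\zeta_\rho(b;n)$ itself is exactly such a sum and is transcendental by Theorem \ref{T:Transcendence}.

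The paper's proof repairs precisely this defect. Rather than summing over all ideals, it decomposes $\zeta_\rho(b;n)$ over ideal \emph{classes} (Proposition \ref{P:Zfp}), rewrites each class contribution via power-sum formulas in terms of the shtuka function (Proposition \ref{P:Siextended}), and then performs a \emph{finite} Galois average over $\Gal(\bH/\bK)$ to produce the function $\tcG_b = \sum_{\ophi} \ob^{\ophi}(\cG^{\ophi})^n$. The crucial point (Proposition \ref{P:cG}) is that $(-1)^n f^n \tcG_b$ is an honest polynomial in $N \cap H[t,y]$: the Galois averaging collapses the infinite ideal sum into a finite object. This polynomial then has a finite $\sigma$-basis expansion $\sum_{j=0}^{q+e}\sum_k d_{k,j}\,\sigma^j(h_{n-k+1})$, and one matches the resulting series for $C\zeta_\rho(b;n)$ against the bottom-row formula for the logarithm coefficients (Corollary \ref{C:LogCoefbottomrow}) to conclude that $\Exp_\rho\on$ of a vector with last coordinate $C\zeta_\rho(b;n)$ equals the \emph{finite} sum $\sum_{j=0}^{q+e} \bd_j\twistk{j} \in H^n$. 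The constant $C$ and its sign $(-1)^{n+1}$ emerge from the divisor identity $(\delta\twist)^n = (-1)^{n+1}\,h_1\cdot(h_1\circ[-1])/(t - t([n]V\twist))$, not from a direct residue computation on your $B_\fa$'s. The missing ingredient in your plan is exactly this passage from an infinite ideal sum to a finite Galois sum landing in $N$.
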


\begin{remark}
We remark that the vector $\bd$ is explicitly computable as outlined in the proof of Theorem \ref{T:ZetaValues}.
\end{remark}

\begin{remark}
One would like to be able to express the above theorem in terms of evaluating $\Log_\rho\on$ at a special point and then getting a vector with $\zeta_\rho(n)$ as its bottom coordinate, as is done in \cite{AndThak90}.  However, one discovers that $\bd$ is not necessarily within the radius of convergence of $\Log_\rho\on$, and in fact $\bd$ can be quite large!  It is possible that one could use Thakur's idea from \cite[Thm. VI]{Thakur92} to decompose $\bd$ into small pieces which are each individually inside the radius of convergence of the logarithm for specific examples.
\end{remark}

Before giving the proof of Theorem \ref{T:ZetaValues} we require several additional definitions and preliminary results.  First, we denote $\bH$ as the Hilbert class field of $\bK$ (which is the fraction field of $\bA$), and denote $\Gal(\bH/\bK)$ as the Galois group of $\bH$ over $\bK$.  Then we observe that elements $\overline \phi \in \Gal(\bH/\bK)$ act on elements in the compositum field $H\bH$ by applying $\overline\phi$ to elements of $\bH$ and ignoring elements of $H$.  We also define the (isomorphic) Galois group $\Gal(H/K)$ and observe that elements $\phi \in \Gal(H/K)$ act on the compositum field $H\bH$ by applying $\phi$ to elements of $H$ and ignoring elements of $\bH$.  Let $\fp \subseteq A$ be a degree 1 prime ideal, to which there is an associated point $P = (t_0,y_0) \in E(\F_q)$ such that $\fp = (\theta-t_0, \eta-y_0)$, and let $\phi = \phi_{\fp} \in \Gal(H/K)$ denote the Artin automorphism associated to $\fp$ via class field theory.  Define the power sums
\begin{equation}
S_i(s)  = \sum_{a\in A_{i+}} \frac{1}{a^s}, \quad S_{\fp,i}(s) = \sum_{a\in \fp_{i+}} \frac{1}{a^s},
\end{equation}
where $A_{+}$ is the set of monic elements of $A$ and $A_{i+}$ is the set of monic, degree $i$ elements of $A$.  Then define the sums
\begin{equation}\label{zetadef}
\cZ_{(1)}(b;s) = b\sum_{i\geq 0} S_i(s) = b\sum_{A+} \frac{1}{a^s},\quad \cZ_\fp(b;s) = b^{\phi\inv}(-f(P)^{\phi\inv})^s\sum_{a \in \fp_{+}} \frac{1}{a^s}.
\end{equation}

We next prove a proposition which allows us to connect $\zeta_\rho(b;s)$ to the sums given above.  Much of our analysis follows similarly to that in \cite[\S 7-8]{GP16}, and we will appeal to it frequently throughout the remainder of the section.  
\begin{proposition}\label{P:Zfp}
Let $\fp_k$ for $2\leq k\leq h$ be the degree 1 prime ideals as described above which represent the non-trivial ideal classes of $A$ where $h$ is the class number of $A$ and set $\fp_1 = (1)$.  Then, for $s \in \Z$ we can write the zeta function
\[\zeta_\rho(b;s) = \cZ_{\fp_1}(b;s) + \dots + \cZ_{\fp_h}(b;s).\]
\end{proposition}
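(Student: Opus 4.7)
The plan is to partition the index set $\{\fa \subseteq A\}$ of the sum defining $\zeta_\rho(b;s)$ by ideal class. Since $\fp_1 = (1), \fp_2, \dots, \fp_h$ exhaust the classes of $\Cl(A)$, we have
\[
\zeta_\rho(b;s) = \sum_{k=1}^h \sum_{[\fa] = [\fp_k\inv]} \frac{b^{\phi_\fa}}{\partial(\rho_\fa)^s},
\]
and the goal is to show that each inner sum equals $\cZ_{\fp_k}(b;s)$.

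For a fixed $k$, I would use the standard bijection between integral ideals $\fa$ in the class $[\fp_k\inv]$ and principal ideals contained in $\fp_k$, namely $\fa \mapsto \fa \fp_k = (a)$ with $a$ the unique sign-normalized generator, so that the inner sum is reindexed by $a \in \fp_{k,+}$. Next, I would apply the multiplicativity of the Hayes extension, $\rho_{\fa\fp_k} = \rho_\fa^{\phi_{\fp_k}} \cdot \rho_{\fp_k}$, and take constant terms to obtain $a = \partial(\rho_\fa)^{\phi_{\fp_k}} \cdot \partial(\rho_{\fp_k})$. Combined with the shtuka-function evaluation $\partial(\rho_{\fp_k}) = -f(P_k)$ established in \cite[\S 7-8]{GP16} for the point $P_k \in E(\F_q)$ associated to $\fp_k$, this yields
\[
\partial(\rho_\fa) = \left( \frac{a}{-f(P_k)}\right)^{\phi_{\fp_k}\inv}.
\]

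The Artin action on $b$ is then handled as follows: because $\fa \fp_k = (a)$ is principal with $\sgn(a) = 1$, the symbol $\phi_{\fa\fp_k}$ is trivial, so $\phi_\fa = \phi_{\fp_k}\inv$ and $b^{\phi_\fa} = b^{\phi_{\fp_k}\inv}$. Since $a \in A$ is fixed by $\Gal(H/K)$, the summand reduces to
\[
\frac{b^{\phi_{\fp_k}\inv}\,(-f(P_k)^{\phi_{\fp_k}\inv})^s}{a^s},
\]
and summing over $a \in \fp_{k,+}$ gives exactly $\cZ_{\fp_k}(b;s)$ as in \eqref{zetadef}. The case $k=1$ (the principal class, $\fp_1 = (1)$) is immediate: all $\fa$ in this class are principal, $\partial(\rho_\fa) = a$, $\phi_\fa$ is trivial, and the inner sum collapses to $\cZ_{(1)}(b;s) = b\sum_{a \in A_+} a^{-s}$.

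The main obstacle is verifying the two inputs from Hayes theory in the present elliptic-curve setting, namely the multiplicativity formula $\rho_{\fa\fp_k} = \rho_\fa^{\phi_{\fp_k}}\rho_{\fp_k}$ and the explicit identification $\partial(\rho_{\fp_k}) = -f(P_k)$ for degree $1$ primes. Both are proved in \cite{GP16} via direct analysis of the shtuka function at $P_k$, and since neither relies on the tensor-power construction, they apply here without modification.
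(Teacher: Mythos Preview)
Your proposal is correct and takes essentially the same approach as the paper: both partition the defining sum for $\zeta_\rho(b;s)$ by ideal class and reduce each class sum to $\cZ_{\fp_k}(b;s)$ via Hayes's multiplicativity $\rho_{\fa\fp_k}=\rho_\fa^{\phi_{\fp_k}}\rho_{\fp_k}$ and the identification $\partial(\rho_{\fp_k})=-f(P_k)$. The paper's proof simply cites the relevant equations and lemma in \cite{GP16} for these facts, whereas you have spelled them out; the only cosmetic difference is that the paper indexes the class sums by $\fa\sim\fp_k$ while you index by $[\fa]=[\fp_k^{-1}]$, which does not affect the total since the $\fp_k$ exhaust the class group.
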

\begin{proof}
Define the sum
\[\tcZ_{\fp_k}(b;s) = \sum_{\fa \sim \fp_k} \frac{b^{\phi_\fa}}{\pd(\rho_{\fa})^s},\]
where the sum is over integral ideals $\fa$ equivalent to $\fp_k$ in the class group of $A$, and observe
\[\zeta_\rho = \sum_{k=1}^h \tcZ_{\fp_k}.\]
Then, for $1\leq k\leq h$, the fact that $\tcZ_{\fp_k}(b;s) = \cZ_{\fp_k}(b;s)$ follows from slight modifications to equations (98)-(100) and Lemma 7.10 from \cite{GP16}.
\end{proof}

Now, we let $\{w_i\}_{i=2}^\infty$ (the reader should not confuse these with the coordinates $w_i$ of $\bw$ from \S \ref{S:LogCoef}) be the sequence of linear functions with $\tsgn(w_i)=1$ and divisor
\begin{equation}\label{divgi}
\divisor(w_i) = (V^{(i-1)} - V) + (-V^{(i-1)}) + (V) - 3( \infty)
\end{equation}
and let $\{w_{\fp,i}\}_{i=2}^\infty$ be the sequence of functions with $\tsgn(w_{\fp,i}) = 1$ and divisor
\begin{equation}\label{divgfpi}
\divisor(w_{\fp,i}) = (V^{(i-2)} - V -P) + (-V^{(i-2)}) + (V) + (P) - 4(\infty).
\end{equation}
We now extend Theorem 6.5 from \cite{GP16} to values $1\leq s \leq q-1$, where we recall the definition of $\nu(t,y)$ from \eqref{nudiv}.

\begin{proposition}\label{P:Siextended}
For $1\leq s\leq q-1$ we find
\[S_i(s) =  \left (\frac{\nu^{(i)}}{w_i^{(1)}\cdot f^{(1)} \cdots f^{(i)}}\right )^s \Bigg|_\Xi, \quad S_{\fp,i}(s) = \left (\frac{\nu^{(i-1)}}{w_{\fp,i}^{(1)} \cdot f^{(1)} \cdots f^{(i-1)}}\right )^s \Bigg|_{\Xi}.\]
\end{proposition}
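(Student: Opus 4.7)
The base case $s=1$ of both identities is Theorem~6.5 of \cite{GP16}. My plan is to establish the power sum identities $S_i(s) = S_i(1)^s$ and $S_{\fp,i}(s) = S_{\fp,i}(1)^s$ for $1 \leq s \leq q-1$, then raise each side of the $s=1$ formula to the $s$-th power to obtain the proposition.

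The crucial ingredient I would prove is the following lemma: for any finite-dimensional $\F_q$-vector subspace $W \subseteq \C_\infty$ of positive dimension and any $v \in \C_\infty$ with $v+w \neq 0$ for all $w \in W$,
\[
\sum_{w \in W} \frac{1}{(v+w)^s} = \left(\sum_{w \in W} \frac{1}{v+w}\right)^s, \qquad 1 \leq s \leq q-1.
\]
Setting $L_W(v) := \prod_{w \in W}(v+w)$, its $\F_q$-linearity gives $L_W(v) = c_0 v + c_1 v^q + \cdots + v^{|W|}$, and the $s=1$ case reads $\sum_w 1/(v+w) = L_W'(v)/L_W(v) = c_0/L_W(v)$. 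Both sides of the lemma are rational functions with poles only at $v = -w$ for $w \in W$, each of order $s$, and both vanish at $\infty$. Since $L_W(v) = L_W(v+w)$ for every $w \in W$, one has the local expansion $L_W(v) = c_0(v+w) + O((v+w)^q)$ near each pole, and since $s \leq q-1$ the subleading contributions to $c_0^s/L_W(v)^s$ at $v = -w$ lie in $O((v+w)^{q-1-s})$, which is regular at the pole. The difference between the two sides is then an everywhere-regular rational function vanishing at $\infty$, so is identically zero. To apply the lemma, I would note that $A_{i+}$ is a coset $a_0 + W_i$, where $W_i \subset A$ is the $\F_q$-subspace of elements of degree $<i$, and similarly $\fp_{i+}$ is a coset of the $\F_q$-subspace of $\fp$ consisting of degree-$<i$ elements; translating then yields the desired power-sum identities.

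The main obstacle is the local analysis in the lemma: the bound $s \leq q-1$ is precisely what prevents the $c_1 v^q$-term of $L_W$ from contributing a spurious pole to $c_0^s/L_W(v)^s$ near $v = -w$, and for $s \geq q$ the identity actually fails, consistent with the restriction in the proposition. A secondary technical point is correctly matching the indexing of the shtuka-function factors and of $w_i$, $w_{\fp,i}$ in the $s=1$ formulas of \cite{GP16} with the statement at hand, so that raising $S_i(1)$ and $S_{\fp,i}(1)$ to the $s$-th power yields precisely the expressions appearing in the proposition.
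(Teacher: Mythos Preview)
Your proof is correct and takes a genuinely different route from the paper's. The paper does not use the $s=1$ case as a black box; instead it revisits the deformation $\cR_{i,s}(t,y)$ employed in \cite{GP16}, observes that there the deformation parameter was specialised to $q-1$, and shows that specialising it to an arbitrary $m$ with $1\leq m\leq q-1$ and solving for $S_i(q-m)$ yields the general formula directly. Your approach instead reduces everything to the known $s=1$ formula via the elementary identity $S_i(s)=S_i(1)^s$ (and similarly for $S_{\fp,i}$), proved through the lemma on reciprocal power sums over cosets of $\F_q$-subspaces. The principal-part comparison you sketch for that lemma is correct and pinpoints exactly why the bound $s\leq q-1$ enters: the next term of $L_W$ beyond the linear one is of order $q$, so the correction to $c_0^s/L_W(v)^s$ near each pole is $O((v+w)^{q-1-s})$ and hence regular. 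What your approach buys is a self-contained and elementary reduction that avoids the deformation machinery entirely; what the paper's approach buys is uniformity, with all values of $s$ produced by a single construction rather than a bootstrap from $s=1$. One small remark: your positive-dimension hypothesis on $W$ is harmless but unnecessary, since for $W=\{0\}$ the lemma is the tautology $1/v^s=(1/v)^s$, and this trivial case is actually what occurs for the smallest nonzero $i$ in $S_{\fp,i}$.
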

\begin{proof}
The proof of this proposition involves a minor alteration to the proof given for Proposition 6.5 in \cite{GP16}.  Namely, for the deformation $\cR_{i,s}(t,y)$ one sets $s=m$ (rather than $s=q-1$ as is done in \cite{GP16}) then one solves for $S_i(q-m)$ and sets $s=q-m$ to obtain the formula given above.  The proof for $S_{\fp,i}(s)$ is similar.
\end{proof}

Using equations (82) and (117) from \cite{GP16} we see that
\[\frac{\delta\twist}{w_i\twist}\bigg|_\Xi = \frac{f}{t-\theta}\bigg|_{V\twisti} = \frac{f(V\twisti)}{-\delta\twisti(\Xi)},\]
which inspires the definition 
\begin{equation} \label{Gdef}
  \cG := \frac{\beta + \overline{\beta} + c_1 \overline{\alpha}+ c_3}{\alpha - \overline{\alpha}} -
  \frac{\overline{\beta}^q + \overline{\beta} + c_1 \overline{\alpha} + c_3}{\overline{\alpha}^q - \overline{\alpha}},
\end{equation}
where we recall that $V = (\alpha,\beta)$ from \eqref{Vequation}, that $c_i\in \F_q$ are from \eqref{ecequation} and for $x\in H$ that $\overline x = \chi(x)$ as in \eqref{canoniso}.  Observe by \eqref{fdef} that $\cG\twisti(\Xi) = f(V\twisti)$ and hence
\begin{equation}\label{Giformula}
\frac{\delta\twist}{w_i\twist}\bigg|_\Xi = -\left (\frac{\cG}{\delta}\right )\twisti \bigg|_\Xi.
\end{equation}
Finally, we define
\[\tcG_b = \sum_{\overline \phi \in \Gal(\bH/\bK)} \ob^{\overline \phi} (\cG^{\overline \phi})^n.\]
\begin{proposition} \label{P:cG}
We have $f^n \tcG_b \in N$, where $N$ is the dual $\bA$-motive from \eqref{MNdef} and $f^n \tcG_b \in H[t,y]$.
\end{proposition}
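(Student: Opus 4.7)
The plan is to prove the proposition by showing that $\tcG_b$ defines a rational function on $E$ over $H$, analyzing its divisor on $U$, and combining with $\divisor(f^n) = n[(V\twist) - (V) + (\Xi) - (\infty)]$ from \eqref{fdivisor}. Since $N \subset H[t,y]$, it suffices to establish the first conclusion, which reduces to the divisor inequality $\divisor(\tcG_b)|_U \geq n(V) - n(\Xi)$.

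First, $\tcG_b = \sum_{\ophi \in \Gal(\bH/\bK)} \ob^{\ophi} (\cG^{\ophi})^n$ is manifestly a Galois sum, hence invariant under $\Gal(\bH/\bK)$, so $\tcG_b$ lies in the fixed field $H \cdot \bK = H(t,y) = H(E)$. Next I would introduce the rational function
\[
F(T,Y) := \frac{\beta + Y + c_1 T + c_3}{\alpha - T} - \frac{Y^q + Y + c_1 T + c_3}{T^q - T} \in H(E),
\]
so that $\cG = F(\overline{V})$ and $\cG^{\ophi} = F(\overline{V}^{\ophi})$ (using that $F$ has $H$-coefficients, hence is fixed by $\ophi$). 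Interpreting each summand geometrically, the first term at $P = (T,Y)$ is the slope of the line through $V$ and $-P$, while the second is the slope of the line through $P\twist$ and $-P$; both lines pass through $-P$, so they coincide iff $V$, $P\twist$, $-P$ are collinear, which by the group law on $E$ is equivalent to $V = P - P\twist$. This yields the crucial identity
\[
F(P) = 0 \iff P \in \pi\inv(V), \qquad \pi := 1 - \Fr : E \to E.
\]

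Because $d\Fr = 0$ in characteristic $p$, the differential $d\pi$ is the identity, so $\pi$ is étale of degree $h = \#E(\F_q)$, the class number of $\bA$. By Hayes' theory, $\bH = \bK(\oalpha, \obeta)$ is Galois over $\bK$ of degree $h$, and the Galois orbit $\{\overline{V}^{\ophi}\}$ coincides with the fiber $\pi\inv(\overline{\Xi})$ for the generic point $\overline{\Xi} = (t,y)$. With this dictionary the divisor analysis of $\tcG_b$ on $U$ reduces to three cases. At $(t,y) = \Xi$, the fiber contains $V$, where the first term of $F$ has a simple pole; étaleness of $\pi$ identifies local parameters so that this term contributes a pole of order at most $n$, while other fiber terms are regular, giving $\tcG_b$ a pole of order at most $n$ at $\Xi$. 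At $(t,y) = V$, the fiber $\pi\inv(V)$ does not contain $V$ (since $\pi(V) = \Xi \neq V$), but every fiber point is a zero of $F$ by the key identity, so every term of the sum vanishes; étaleness of $\pi$ again gives vanishing of order exactly $n$ per term, so $\tcG_b$ has a zero of order at least $n$ at $V$. At any other finite point $Q \in U$, the only remaining finite poles of $F$ on $E$ occur at $\F_q$-rational points (from the denominator $T^q - T$), all of which map under $\pi$ to $0 = \infty$ and so contribute no finite poles of $\tcG_b$; in particular $\tcG_b$ is regular at $V\twist$.

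Combining the three cases yields $\divisor(\tcG_b)|_U \geq n(V) - n(\Xi)$, hence $\divisor(f^n \tcG_b)|_U \geq n(V\twist)$, which is precisely the condition $f^n \tcG_b \in N$, and a fortiori $f^n \tcG_b \in H[t,y]$. The main obstacle is the key identity $F(P) = 0 \iff P \in \pi\inv(V)$ together with the multiplicity check via étaleness of $\pi$, which is what guarantees that each factor $F(\overline{V}^{\ophi})^n$ contributes exactly multiplicity $n$ to the local behavior of $\tcG_b$ at both $V$ and $\Xi$.
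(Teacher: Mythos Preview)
Your approach is essentially the same as the paper's, which defers to \cite[Thm.~8.7]{GP16} and tells the reader to replace $\cF$ by $\cG^n$ and scale divisors by $n$: both arguments reduce to showing that the polar divisor of $\tcG_b$ on $U$ is $-n(\Xi)$ and that $\tcG_b$ vanishes to order at least $n$ at $V$. Your reformulation via the \'etale isogeny $\pi = 1-\Fr$, identifying $H\bH$ with the function field of the source curve and $\{\overline{V}^{\ophi}\}$ with the generic fiber, is exactly the geometry underlying the GP16 computation, and the collinearity interpretation of $F(P)=0$ is the right organizing idea.

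Two small points to clean up. First, the sentence ``Since $N \subset H[t,y]$'' is not correct as stated: $N=\Gamma(U,\cO_E(-nV\twist))$ is defined over $\C_\infty$, not over $H$. What you actually use is that $\tcG_b\in H(t,y)$ (by Galois invariance) together with $f^n\tcG_b\in N\subset \C_\infty[t,y]$, whence $f^n\tcG_b\in H(t,y)\cap \C_\infty[t,y]=H[t,y]$; this is also how the paper concludes. Second, you should remark that $\ob$ (and hence each $\ob^{\ophi}$) is integral over $\bA$, so it contributes no finite poles on the source curve; otherwise the regularity claim at ``any other finite point $Q\in U$'' is incomplete. Once these are added, your pole/zero bookkeeping---checking that the non-$V$ points of $\pi^{-1}(\Xi)=V+E(\F_q)$ avoid the pole locus of $F$, and that the $\F_q$-rational poles of $F$ lie over $\infty$---matches the divisor statement the paper extracts from \cite{GP16}.
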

\begin{proof}
Our function $\cG$ equals the function $\cF$ from \cite[(125)]{GP16} (there they set $\phi=\overline \alpha$ and $\psi = \overline \beta$), and so our function $\tcG_b$ differs from the function $g_b$ from \cite[(126)]{GP16} only by the $n$th power in our definition.    The proof of this theorem follows as in the proof of Theorem 8.7 from \cite{GP16}, replacing $\cF$ by $\cG^n$ and multiplying the divisors by a factor of $n$ where appropriate.  We arrive at the statement that the polar divisor of $\tcG_b$ equals $-n(\Xi) -(nq-\deg(b))(\infty)$, and that $\tcG_b$ vanishes with degree at least $n$ at $V$ so that $f^n\cdot \tcG \in N$ as desired.  Finally, since the coefficients of $f$ and $\cG$ are all in $H$, we conclude that $f^n \tcG_b \in H[t,y]$.
\end{proof}

We are now equipped to give the proof of Theorem \ref{T:ZetaValues}.

\begin{proof}[Proof of Theorem \ref{T:ZetaValues}]
Our starting point is Proposition \ref{P:Zfp},
\begin{equation}\label{cZfundamental}
\zeta_\rho(b;s) = \cZ_{\fp_1}(b;s) + \dots + \cZ_{\fp_h}(b;s)
\end{equation}
where we recall that for a degree 1 prime ideal $\fp$ and its associated Galois automorphism $\phi$
\begin{equation}\label{cZfp}
\cZ_\fp(b;n) = b^{\phi\inv}(-f(P)^{\phi\inv})^n\sum_{a \in \fp_{+}} \frac{1}{a^n} = b^{\phi\inv}(-f(P)^{\phi\inv})^n\sum_{i=0}^\infty S_{\fp,i}(n).
\end{equation}
If we let $[-1]$ denote the negation isogeny on $E$, by comparing divisors and leading terms of the functions in \eqref{nudiv} and \eqref{hidivisor} we find 
\begin{equation}\label{deltatwistn}
(\delta\twist)^n = \frac{(-1)^{n+1} (h_1)(h_1\circ [-1])}{t-t([n]V\twist)}.
\end{equation}
We will denote $C = \frac{(-1)^{n+1}(h_1\circ [-1])}{t-t([n]V\twist)}\bigg|_\Xi \in H$.  Combining \eqref{zetadef}, Proposition \ref{P:Siextended}, \eqref{Giformula} and \eqref{deltatwistn} we find
\begin{equation}\label{bZn}
\cZ_{(1)}(b;n) = \sum_{i=0}^\infty \frac{\ob\left (\left ( -f \cG\right )\twisti \right )^n}{C\cdot h_1\left (f^{(1)} \cdots f^{(i)}\right )^n} \Bigg|_\Xi.
\end{equation}

Next, we temporarily fix a prime $\fp = \fp_k$ for $2\leq k\leq h$.  The combination of equations (86) and (118) and Lemma 7.12 from \cite{GP16} gives
\begin{equation}\label{ellpformula}
\frac{1}{w_{\fp,i+1}\twist} = -\frac{f^{\phi\inv}}{t-\theta}\bigg|_{V\twisti} \cdot \frac{1}{\delta\twist(\Xi)}\cdot \frac{1}{f(P)^{\phi\inv}}= f^{\phi\inv}\bigg|_{V\twisti} \cdot \frac{1}{\delta\twist(\Xi)\delta\twisti(\Xi)}\cdot \frac{1}{f(P)^{\phi\inv}},
\end{equation}
since $t-\theta(V\twisti) = -\delta\twisti(\Xi)$.  Then, \eqref{cZfp} and Proposition \ref{P:Siextended} together with \eqref{ellpformula} and the fact that $S_{\fp,0}=0$ gives
\begin{equation}
\cZ_\fp(b;n) = (-1)^nb^{\phi\inv}\sum_{i=0}^\infty \left (\frac{f^{(i)}}{\delta\twist f^{(1)} \cdots f^{(i)}}\right )^n\Bigg|_{\Xi}\cdot \left (f^{\phi\inv}\right )^n\bigg|_{V\twisti}
\end{equation}
We observe by \eqref{fdef} and \eqref{Gdef} that $f^{\phi\inv}\bigl(V^{(i)} \bigr) = \bigl( \cG^{\ophi\inv} \bigr)^{(i)}(\Xi)$ and so by \eqref{deltatwistn} this gives
\begin{equation}\label{cZb}
\cZ_\fp(b;n) = \sum_{i=0}^\infty\frac{\ob^{\ophi\inv}\left ((-f \cG^{\ophi\inv})^n\right )^{(i)}}{ Ch_1\left ( f^{(1)} \cdots f^{(i)}\right )^n}\Bigg|_{\Xi}.
\end{equation}
Therefore, returning to \eqref{cZfundamental} we see by \eqref{bZn} and \eqref{cZb} that
\begin{equation}\label{cZn3}
\zeta_\rho(b;n) = \sum_{i=0}^\infty\sum_{\ophi\in \Gal(\bH/\bK)} \frac{\ob^{\ophi}\left ((-f \cG^{\ophi})^n\right )^{(i)}}{Ch_1 \left (f^{(1)} \cdots f^{(i)}\right )^n}\Bigg|_{\Xi}= \sum_{i=0}^\infty\frac{\left ((-1)^nf^n \tcG_b\right )^{(i)}}{ Ch_1\left (f^{(1)} \cdots f^{(i)}\right )^n}\Bigg|_{\Xi}.
\end{equation}
From the proof of Proposition \ref{P:cG} we see that $\deg(f^n \tcG_b) = n(q+1) + \deg(b)$ and from \eqref{gihigher} that $\deg(\sigma^j (h_k)) = n(j+1)+k$.  Let us write $\deg(b) = en + b'$ where $0\leq b'\leq n-1$ so that $\deg(f^n \tcG_b) = n(q+e+1) + b'$.  Since $(-1)^n(f \tcG_b)^n \in N$ by Proposition \ref{P:cG}, we can express it in terms of the basis from Proposition \ref{P:review}(a) with coefficients $d_{k,j}\in \overline K$,
\begin{equation}\label{fcFcoefficients}
(-1)^nf^n \tcG_b = \sum_{j=0}^{q+e} \sum_{k=1}^n d_{k,j} \sigma^j(h_{n-k+1}) = \sum_{j=0}^{q+e} \sum_{k=1}^n d_{k,j} (ff\twistinv \dots f\twistk{1-j})^n h_{n-k+1}\twistk{-j},
\end{equation}
where we comment that $d_{k,q+e} = 0$ for $k>b'$.  Since $(-1)^nf^n \tcG_b \in H[t,y]$ by Proposition \ref{P:cG}, a short calculation involving evaluating \eqref{fcFcoefficients} at $\Xi\twistk{k}$ for $0\leq k\leq q+e$ shows that $d_{k,j}\twistk{j} \in H$.  Substituting formula \eqref{fcFcoefficients} into \eqref{cZn3} and recalling that $f(\Xi) = 0$ gives
\[
\zeta_\rho(b;n) = \sum_{i=0}^\infty \frac{\sum_{j=0}^{\min(i,q+e)} \sum_{k=1}^n d_{k,j}\twisti h_{n-k+1}\twistk{i-j}}{C\cdot h_1\left (f^{(1)} \cdots f^{(i-j)}\right )^n} \Bigg|_\Xi.
\]
We observe that the terms of the above sum are the bottom row of the coefficients $P_i$ for $i\geq 0$ of $\Log_\rho\on$ from Corollary \ref{C:LogCoefbottomrow} up to the factor of $d_{k,j}\twisti/C$.  Then, since $\Log_\rho\on$ is the inverse power series of $\Exp_\rho\on$, if we label $\bd_{j} = (d_{1,j},\dots,d_{n,j})^\top \in \overline{K}^n$ for $0\leq j\leq q+e$ and sum over $i\geq 0$, then we find that there exists some vector $(*,\dots,*,C\zeta_\rho(b;n))^\top)$ such that
\[\left (\bd_0 +\bd_1\twist + \dots + \bd_{q+e}\twistk{q+e}\right ) =
\Exp_\rho\on\left (\begin{matrix}
*\\
\vdots\\
*\\
C\zeta_\rho(b;n)
\end{matrix}\right ) \in H^n
.\]
\end{proof}

\section{Transcendence implications}\label{S:Transcendence}
In this section we examine some of the transcendence applications of Theorem \ref{T:ZetaValues}.  This is in line with Yu's results on transcendence in \cite{Yu91} for the Carlitz module, where he proves that the ratio $\zeta_\rho(n)/\tpi^n$ is transcendental if $q-1\nmid n$ and rational otherwise.  Yu's work builds on Anderson's and Thakur's theorem in \cite{AndThak90}, where they express Carlitz zeta values as the last coordinate of the logarithm of a special vector in $A^n$ similarly to how we have done in Theorem \ref{T:ZetaValues}.  In the last couple decades, there has been a surge of research answering transcendence questions about arithmetic quantities in function fields, notably \cite{ABP04}, \cite{Brown01}, \cite{CP11}-\cite{CY07}, \cite{P08} and \cite{Yu97}.

\begin{theorem}\label{T:Transcendence}
Let $\rho$ be a rank 1 sign-normalized Drinfeld $\bA$-module, let $\pi_\rho$ be a fundamental period of $\exp_\rho$ and define $\zeta_\rho(b;n)$ as in \eqref{zetafunc} for $b\in B$, the integral closure of $A$ in the Hilbert class field of $K$.  Then for $m\leq q-1$ 
\[\dim_{\overline{K}}\Span_{\overline{K}} \{\zeta_\rho(b;1),\dots,\zeta_\rho(b;m),1,\pi_\rho,\dots,\pi_\rho^{m-1} \} = 2m.\]
\end{theorem}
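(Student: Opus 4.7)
The plan is to proceed by contradiction, applying the sub-$t$-module theorem of Yu~\cite{Yu97} to an Anderson $\bA$-module built out of the tensor powers of $\rho$. Suppose that there exist $\alpha_0, \alpha_1, \dots, \alpha_m, \beta_1, \dots, \beta_{m-1} \in \overline K$, not all zero, such that
\[
\alpha_0 + \sum_{i=1}^m \alpha_i \zeta_\rho(b;i) + \sum_{j=1}^{m-1} \beta_j \pi_\rho^j = 0.
\]
The goal is to produce a vector $\bu$ in the Lie algebra of a carefully chosen $t$-module $\phi$ with $\Exp_\phi(\bu) \in \overline K^{\dim\phi}$, such that the supposed relation is encoded as the vanishing at $\bu$ of a nontrivial $\overline K$-linear functional, and then to derive a contradiction via Yu's classification of the smallest $d[\bA]$-stable subspace containing $\bu$.

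Concretely, take
\[
\phi \;=\; \mathbb{G}_a \;\oplus\; \rho \;\oplus\; \rho^{\otimes 2} \;\oplus\; \cdots \;\oplus\; \rho^{\otimes m},
\]
of total dimension $D = 1 + m(m+1)/2$, where the $\mathbb{G}_a$-factor accommodates the constant~$1$. For each $1 \le i \le m$, Theorem~\ref{T:ZetaValues} supplies $\bz_i \in \C_\infty^i$ with $\Exp_{\rho^{\otimes i}}(\bz_i) \in H^i$ and bottom coordinate $C_i \zeta_\rho(b;i)$; for each $1 \le j \le m-1$, Proposition~\ref{P:ResidueReview}(c) supplies a period $\Pi_j$ of $\rho^{\otimes j}$ with bottom coordinate $h_j \pi_\rho^j$ for some $h_j \in H^\times$. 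Let $\tilde{1}, \tilde{\bz}_i, \tilde{\Pi}_j$ denote the natural embeddings of these vectors into the respective blocks of $\mathrm{Lie}(\phi)(\C_\infty)$, and set
\[
\bu \;=\; \alpha_0 \tilde{1} \;+\; \sum_{i=1}^m (\alpha_i / C_i)\, \tilde{\bz}_i \;+\; \sum_{j=1}^{m-1} (\beta_j / h_j)\, \tilde{\Pi}_j.
\]
By linearity, $\Exp_\phi(\bu) \in \overline K^D$, and the assumed relation is equivalent to the vanishing of the $\overline K$-linear functional on $\mathrm{Lie}(\phi)$ that extracts the weighted combination of bottom coordinates block by block. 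Yu's theorem then asserts that the smallest $d[\bA]$-stable $\overline K$-rational subspace of $\mathrm{Lie}(\phi)(\C_\infty)$ containing $\bu$ equals $\mathrm{Lie}(\phi')(\C_\infty)$ for some sub-$t$-module $\phi' \subseteq \phi$ defined over $\overline K$.

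The main obstacle lies in extracting the contradiction from this structural statement, and it rests on classifying the sub-$t$-modules of $\phi$. The key structural inputs are (i) the simplicity of each $\rho^{\otimes i}$ as an Anderson $\bA$-module (following Anderson~\cite{And86}, adapted to the elliptic setting of~\cite{GreenPeriods17}); (ii) the vanishing $\Hom_{\overline K}(\mathbb{G}_a, \rho^{\otimes i}) = 0$; and (iii) a careful treatment of the Hom-spaces between distinct tensor powers $\rho^{\otimes i}$ and $\rho^{\otimes j}$, which do not vanish in general and give rise to graph-type sub-$t$-modules of $\phi$ that must be ruled out case by case. Once the classification is in hand, a block-by-block matching of the support of $\bu$ against the possible $\phi'$'s, combined with the $\overline K$-linear independence of $\bz_i$ and $\Pi_i$ in $\C_\infty^i$ for each $1 \le i \le m-1$ (established via an auxiliary application of Yu's theorem to the simple module $\rho^{\otimes i}$), forces each of $\alpha_0, \alpha_i, \beta_j$ to vanish, contradicting non-triviality. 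Controlling the graph-type sub-$t$-modules is expected to be the most delicate part of the argument.
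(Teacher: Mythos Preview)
Your setup has a genuine error at the step ``By linearity, $\Exp_\phi(\bu) \in \overline K^D$.'' The exponential function $\Exp_\phi$ is only $\F_q$-linear, not $\overline K$-linear; thus $\Exp_{\rho^{\otimes i}}\bigl((\alpha_i/C_i)\,\bz_i\bigr)$ is not $(\alpha_i/C_i)\,\Exp_{\rho^{\otimes i}}(\bz_i)$ in general, and there is no reason for it to be algebraic. Because you have placed $\bz_j$ and $\Pi_j$ in the \emph{same} $\rho^{\otimes j}$ block and scaled by $\overline K$-coefficients, the algebraicity of $\Exp_\phi(\bu)$ fails, and Yu's theorem cannot be invoked.

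The paper repairs this in the way Yu's method is designed to be used: one does \emph{not} insert the coefficients of the hypothetical relation into $\bu$. Instead one takes
\[
G \;=\; G_L \times \Bigl(\prod_{i=1}^m \hat\rho^{\otimes i}\Bigr) \times \Bigl(\prod_{j=1}^{m-1} \hat\rho^{\otimes j}\Bigr),
\]
with \emph{two} copies of the tensor powers (one family carrying the $\bz_i$'s, the other the $\Pi_j$'s), and sets $\bu = 1 \times (\bz_1,\dots,\bz_m)\times(\Pi_1,\dots,\Pi_{m-1})$ with no scaling. Then $\Exp_G(\bu)\in G(H)$ holds trivially, and the assumed $\overline K$-relation on the bottom coordinates defines a $d[\F_q[t]]$-invariant $\overline K$-hyperplane of $\mathrm{Lie}(G)$ containing $\bu$ (invariance coming from the upper-triangular form of $d[\theta]$, $d[\eta]$). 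The paper's endgame is also shorter than your proposed sub-$t$-module classification: simplicity of $\hat\rho^{\otimes i}$ together with $\End(\hat\rho^{\otimes i})=\bA$ and Yu's Theorem~1.3 reduce the relation to one of the form $a\,\zeta_\rho(b;j)+b\,\pi_\rho^j=0$ in a single degree $j\le m-1$, which is then dispatched by the valuation-theoretic fact that $\zeta_\rho(b;j)\in K_\infty$ while $\pi_\rho^j\notin K_\infty$ for $j<q-1$.
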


Our main strategy for proving Theorem \ref{T:Transcendence} is to appeal to techniques Yu develops in \cite{Yu97}, where he proves an analogue of W\"ustholz's analytic subgroup theorem for function fields.  Yu's theorem applies to Anderson $\F_q[t]$-modules (called $t$-modules), whereas here we deal with $\bA$-modules.  Thus, we switch our perspective slightly by forgetting the $y$-action of $\rho\on$ in order to view $\rho\on$ as an $\F_q[t]$-module with extra endomorphisms provided by the $y$-action.  We will denote this $\F_q[t]$-module by $\hat{\rho}\on$.  Under the construction given in \S \ref{S:Review}, the $\F_q[t]$-module $\hat{\rho}\on$ corresponds to the dual $t$-motive $N$ when viewed as a $\C_\infty[t,\sigma]$-module (we have forgotten the $y$-action on $N$), which we denote by $N'$.  Before giving the proof of Theorem \ref{T:Transcendence} we require a couple of lemmas which ensure that $\hat{\rho}\on$ satisfies the correct properties as a $t$-module to apply Yu's theorem.

\begin{lemma}\label{L:Simple}
The Anderson $\F_q[t]$-module $\hat{\rho}\on$ is simple.
\end{lemma}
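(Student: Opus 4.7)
The plan is to argue by contradiction, leveraging the full $\bA$-module structure of $\rho\on$ beyond its mere $\F_q[t]$-module restriction.  Suppose $W \subsetneq \C_\infty^n$ is a nonzero proper $\C_\infty$-subspace stable under $\rho\on_t$.  Since $t$ and $y$ commute in $\bA$, the operator $\rho\on_y$ commutes with $\rho\on_t$, so $\rho\on_y(W)$ is again stable under $\rho\on_t$.  Using the Weierstrass equation \eqref{ecequation} to express $(\rho\on_y)^2$ as a polynomial in $\rho\on_t$ and $\rho\on_y$ with coefficients in $\F_q$, the subspace $W' := W + \rho\on_y(W)$ is closed under the action of all of $\bA$ via $\rho\on$, and hence is a sub-$\bA$-module of $\rho\on$.

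I would next show that $\rho\on$ is simple as an $\bA$-module.  Via Anderson's anti-equivalence, sub-$\bA$-modules of $\rho\on$ correspond to $\sigma$-stable $\mathcal{O}_U$-submodules of the dual $\bA$-motive $N = \Gamma(U, \mathcal{O}_E(-nV\twist))$ from \eqref{MNdef}.  Since $N$ is invertible as an $\mathcal{O}_U$-module, every such submodule has the form $\mathcal{I}\cdot N$ for a nonzero fractional ideal $\mathcal{I}$ of $\mathcal{O}_U$, and the $\sigma$-stability condition $f^n \mathcal{I}\twistinv N\twistinv \subset \mathcal{I}N$, combined with $\divisor(f) = (V\twist) - (V) + (\Xi) - (\infty)$ from \eqref{fdivisor}, forces $\mathcal{I} = \mathcal{O}_U$ by iterating under Frobenius twisting and bounding degrees of effective divisors.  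Hence $W' \in \{0, \C_\infty^n\}$, and since $W \neq 0$ we conclude $W + \rho\on_y(W) = \C_\infty^n$.

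To finish, consider $W_0 := \{w \in W : \rho\on_y(w) \in W\}$.  An argument analogous to the first paragraph (again invoking the Weierstrass relation to handle $y^2$) shows $W_0$ is stable under both $\rho\on_t$ and $\rho\on_y$, so it is a sub-$\bA$-module of $\rho\on$ sitting inside $W \subsetneq \C_\infty^n$; hence $W_0 = 0$ by the preceding step.  This forces $\rho\on_y|_W$ to be injective with image disjoint from $W$, yielding a $\C_\infty$-vector space decomposition $\C_\infty^n = W \oplus \rho\on_y(W)$ with $\dim W = n/2$.  The main obstacle will be extracting a contradiction from this rigid decomposition: using the explicit forms of $d[\theta]$ and $d[\eta]$ in \eqref{taction}--\eqref{yaction}, whose nilpotent parts $N_\theta$ and $N_\eta$ from \eqref{Nthetadef} form single Jordan blocks, I would show that no $\C_\infty$-linear direct sum complement of a $d[\theta]$-invariant subspace can be swapped with it by the specific $d[\eta]+E_\eta\tau$ action, forcing $n$ even and then producing a contradiction from the interplay between the nilpotent structure and the swap condition.
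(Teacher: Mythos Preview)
Your opening move contains a genuine gap that undermines the whole strategy. A sub-$t$-module of $\hat{\rho}\on$ is a connected $\F_q$-linear algebraic subgroup $J\subset \G_a^n$ stable under $\rho\on_t$, not a $\C_\infty$-linear subspace. Such subgroups are cut out by additive polynomials $\sum a_{i,j}x_j^{q^i}$, and since $\rho\on_t=d[\theta]+E_\theta\tau$ involves Frobenius, there is no reason for $J(\C_\infty)$ to be closed under $\C_\infty$-scalar multiplication. (For a toy example, in $\G_a^2$ the subgroup $\{(x,x^q)\}$ is a perfectly good connected $\F_q$-linear algebraic subgroup of dimension $1$ that is not a $\C_\infty$-subspace.) Your reduction to the $\bA$-module case via $W'=W+\rho\on_y(W)$, and the entire endgame in your third paragraph comparing $d[\theta]$- and $d[\eta]$-invariant $\C_\infty$-subspaces, therefore never engage with the actual objects you must rule out.

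Your second paragraph also overreaches. It is \emph{not} true that $\sigma$-stability forces $\mathcal{I}=\cO_U$: for instance $\sigma(N)=f^nN\twistinv$ is a proper $\sigma$-stable $\C_\infty[t,y]$-submodule of $N$, and more generally $\sigma^k(N)$ gives an infinite descending chain. What one can (and must) prove is the weaker statement that every nonzero $\C_\infty[t,\sigma]$-submodule of $N$ has \emph{finite} $\C_\infty$-codimension. The paper does exactly this: starting from an arbitrary $\F_q$-linear algebraic subgroup $J$, it passes to the dual side via $\Hom_{\F_q}(\G_a,-)$ and shows that the quotient $\Hom_{\F_q}(\G_a,\G_a^n)/\Hom_{\F_q}(\G_a,J)$ is infinite-dimensional over $\C_\infty$ (a direct degree argument in $\C_\infty[\tau]^n$). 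On the other hand, for any nonzero $h$ in a $\C_\infty[t,\sigma]$-submodule $J'\subset N'$ it shows that $h$ and $\sigma(h)=f^nh\twistinv$ are $\C_\infty[t]$-linearly independent by analyzing orders at the points $\pm\Xi\twistk{i}$ using $\divisor(f)$; hence $J'$ has $\C_\infty[t]$-rank $2$ and finite codimension, a contradiction. That divisor/order argument is the key idea missing from your sketch.
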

\begin{proof}
We recall the explicit functor between $t$-modules and dual $t$-motives as given in \cite[\S 5.2]{HJ16}.  For a $t$-module $\phi'$ with underlying algebraic group $J\subset \C_\infty^n$, define the dual $t$-motive $N(\phi')$ (note that this is denoted as $\check{M}(\underline E)$ in \cite[\S 5.2]{HJ16}) as $\Hom_{\F_q}(\G_a,J)$, the $\C_\infty[t,\sigma]$-module of all $\mathbb{F}_q$-linear homomorphisms of algebraic groups over $\C_{\infty}$.  One defines the $\C_\infty[t,\sigma]$-module structure on $N(\phi')$ by having $\C_\infty$ act by pre-composition with scalar multiplication, $\sigma$ act as pre-composition with the $q$th-power Frobenius and $t$ acting by $t\cdot m = \phi_t' m$ for $m\in N(\phi')$.  Note that $N(\hat{\rho}\on) = \Hom_{\F_q}(\G_a,\G_a^n)$ is naturally isomorphic to $\C_\infty[\tau]^n$ where $\sigma$ acts for $\bp(\tau)\in \C_\infty[\tau]^n$ by $\sigma\cdot \bp(\tau) = \bp(\tau)\cdot \tau$ and $\C_\infty$ acts by scalar multiplication on the right.  To maintain clarity, when we mean $\C_\infty$ with the action described above we will denote it as a $\C_\infty'$.  Also note that $N(\hat{\rho}\on)$ is isomorphic to $N' = \Gamma (U,\cO_E(nV))$ as $\C_\infty[t,\sigma]$-modules.

Now suppose that $J\subset \C_\infty^n$ is a non-trivial algebraic subgroup of $\C_\infty^n$ invariant under $\hat{\rho}\on(\F_q[t])$ defined by non-zero $\F_q$-linear polynomials $p_j(x_1,\dots,x_n) \in \overline K[x_1,\dots,x_n]$ for $1\leq j\leq m$.  We may assume that one of the polynomials, which we will denote as $p(x_1,\dots,x_n)$ has a non-zero term in $x_1$.  Then note that we have the injection of $\C_\infty'[t,\sigma]$-modules given by inclusion
\[\Hom_{\F_q}(\G_a,J) \hookrightarrow \Hom_{\F_q}(\G_a,\G_a^n),\]
which allows us to view $\Hom_{\F_q}(\G_a,J)$ as a $\C_\infty'[t,\sigma]$-submodule of $\C_\infty'[\tau]^n$, where the $\sigma$-action is given by right multiplication by $\tau$ as descrived above.  Then observe that the map given by
\[p_*:\Hom_{\F_q}(\G_a,\G_a^n) \to \Hom_{\F_q}(\G_a,\G_a)\]
is a $\C_\infty'$-vector space map, that $ \Hom_{\F_q}(\G_a,\G_a) \isom \C_\infty'[\tau]$ and that $\Hom_{\F_q}(\G_a,J) \subset \ker(p_*)$.  By considering degrees in $\tau$, we see that the $\C_\infty'$-vector subspace $(\C_\infty'[\tau],0,\dots,0)\subset \C_\infty'[\tau]^n$ maps to an infinite dimensional $\C_\infty'$-vector subspace of $\C_\infty'[\tau]$ under $p_*$.  This implies that $\Hom_{\F_q}(\G_a,\G_a^n)/\Hom_{\F_q}(\G_a,J)$ also has infinite dimension over $\C_\infty$.

On the other hand, recall that $N' = \Gamma (U,\cO_E(-nV\twist))$ is isomorphic to $N(\hat{\rho}\on)$ as $\C_\infty[t,\sigma]$-modules and that $N'$ is an ideal of the ring $\C_\infty[t,y]$.  Given a $\C_\infty[t,\sigma]$-submodule $J'\subset N'$ we may choose a non-zero element $h\in J'$, and we claim that $\sigma(h)$ is linearly independent from $h$ over $\F_q[t]$.  If not, then we would have
\begin{equation}\label{betaeq}
\beta h = f^n h\twistinv
\end{equation}
for some $\beta \in \F_q(t)$.  However, this implies that the rational function $f^n h\twistinv/h$ is fixed under the negation isogeny $[-1]$ on $E$, and in particular, for $i\neq 0$ we have
\begin{equation}\label{hxidiv}
\ord_{\Xi\twistk{i+1}}(h) - \ord_{-\Xi\twistk{i+1}}(h) +\ord_{-\Xi\twistk{i}}(h) -\ord_{\Xi\twistk{i}}(h) = 0.
\end{equation}
Since $h$ is a polynomial in $t$ and $y$, we see that $\ord_{\Xi\twistk{i}}(h) -\ord_{-\Xi\twistk{i}}(h) = 0$ for $|i|\gg 0$, thus \eqref{hxidiv} shows that $\ord_{\Xi\twistk{i}}(h) -\ord_{-\Xi\twistk{i}}(h) = 0$ for all $i$.  But from \eqref{betaeq} we see that
\[\ord_\Xi(f^n) + \ord_{\Xi\twistk{1}}(h) - \ord_{-\Xi\twistk{1}}(h) +\ord_{-\Xi}(h) -\ord_{\Xi}(h) = 0,\]
which is a contradiction, since $\ord_\Xi(f^n) = n$.
So $J'$ contains a rank 2 $\C_\infty[t]$-submodule and thus $J'$ has finite index in $N'$ as a $\C_\infty$-vector space.  We conclude that all the $\C_\infty[t,\sigma]$-submodules of $N'$ have finite index over $\C_\infty$ which contradicts our observation in the preceding paragraph, thus $\hat{\rho}\on$ must be simple as a $t$-module.
\end{proof}

\begin{lemma}\label{L:Endomorphisms}
The Anderson $\F_q[t]$-module $\hat{\rho}\on$ has endomorphism algebra equal to $\bA$.
\end{lemma}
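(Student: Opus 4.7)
The inclusion $\bA \hookrightarrow \End(\hat{\rho}\on)$ is given by $a \mapsto \rho\on_a$: commutativity of $\bA$ forces each $\rho\on_a$ to commute with $\rho\on_t$, so $\rho\on_a \in \End(\hat{\rho}\on)$.

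For the reverse inclusion I plan to pass through the same anti-equivalence between Anderson $\F_q[t]$-modules and dual $t$-motives used in the proof of Lemma~\ref{L:Simple}, identifying $\End(\hat{\rho}\on)^{\mathrm{op}}$ with $\End_{\C_\infty[t,\sigma]}(N')$. Multiplication on $N$ by elements of $\bA = \F_q[t,y]$ lies in this ring: it is trivially $\C_\infty[t]$-linear, and it commutes with $\sigma b = f^n b\twistinv$ because $\F_q[t,y]$ is fixed under Frobenius twisting. Invoking Lemma~\ref{L:Simple}, the algebra $D := \End(\hat{\rho}\on) \otimes_{\F_q[t]} \F_q(t)$ is a finite-dimensional division algebra over $\F_q(t)$ whose dimension divides $r^2 = 4$, where $r = 2$ is the rank of $\hat{\rho}\on$ as a $t$-module (since $\rank_{\C_\infty[t]}(N') = \rank_\bA(N)\cdot [\bA:\F_q[t]] = 1 \cdot 2 = 2$). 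The embedding of $\bA$ extends to an embedding of the quadratic field $\bK = \F_q(t,y)$ into $D$, so $[D : \F_q(t)] \in \{2,4\}$.

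The core task will be ruling out the quaternion case $[D:\F_q(t)] = 4$, which I plan to accomplish by analyzing the tangent action on the rank-$1$ $\bA$-lattice $\Lambda_\rho\on = d[\bA]\Pi_n$ from Proposition~\ref{P:ResidueReview}(b). For any $\phi \in \End(\hat{\rho}\on)$ we have $d\phi(\Pi_n) = d[a]\Pi_n$ for a unique $a \in \bA$, so after replacing $\phi$ with $\phi - \rho\on_a$ we may assume $d\phi(\Pi_n) = 0$; commutativity of $d\phi$ with $d[t]$ then kills $d\phi$ on all of $d[\F_q[t]]\Pi_n$. To obtain the additional vanishing on $d[y]\Pi_n$ (which by uniformizability of $\hat\rho\on$ would force $\phi = 0$), I will transfer back to $N'$ via the basis $\{h_i\}$ of Proposition~\ref{P:review}(a): a $\C_\infty[t,\sigma]$-endomorphism of $N'$, expressed in this basis, must have matrix entries whose $\sigma$-equivariance, combined with the fact that $N$ is a rank-$1$ $\bA$-module sitting inside the function field $\C_\infty(E)$ with twist-fixed subring exactly $\bA$, forces the endomorphism to be multiplication by a single element of $\bA$.

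The main obstacle is making this last rigidity step precise, namely excluding $\sigma$-equivariant matrix coefficients that commute with $\C_\infty[t]$-multiplication but fall outside the $\bA$-action; this is essentially the same kind of twist-invariance argument carried out in the proof of Lemma~\ref{L:Simple}, applied now to endomorphisms rather than submodules. Once this is done, $D = \bK$, and integrality of endomorphisms over $\F_q[t]$ (equivalently, preservation of the $\bA$-lattice $N \subset \bK$) yields $\End(\hat{\rho}\on) = \bA$.
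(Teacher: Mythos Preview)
Your overall architecture is the same as the paper's---embed $\bA$ into $\End(\hat\rho\on)$, bound $D = \End^0(\hat\rho\on)$ as an $\F_q(t)$-algebra, then finish by integrality---but you are working harder than necessary at the middle step, and the part you flag as the ``main obstacle'' is genuinely unresolved in your sketch.

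The paper does not merely get $[D:\F_q(t)] \mid r^2 = 4$; it invokes \cite[Prop.~2.4.3]{BP02} to obtain $[D:\F_q(t)] \leq 2$ directly. The point is that $\hat\rho\on$ and $\rho\on$ share the same exponential and the same period lattice $\Lambda_\rho\on$, which by Proposition~\ref{P:ResidueReview}(b) has rank~$1$ over $\bA$, hence rank~$2$ over $\F_q[t]$. The endomorphism algebra acts $\F_q[t]$-linearly (and faithfully) on this rank-$2$ lattice, so $D$ embeds in $\Mat_2(\F_q(t))$; since you have already noted (via Lemma~\ref{L:Simple}) that $D$ is a division algebra, the standard argument that a division subalgebra of $\Mat_r(F)$ has $F$-dimension dividing~$r$ gives $[D:\F_q(t)] \in \{1,2\}$. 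The inclusion $\bK \subset D$ then forces $D = \bK$, and integrality of $\End(\hat\rho\on)$ over $\F_q[t]$ inside $\bK$ yields $\End(\hat\rho\on) = \bA$. The quaternion case never arises.

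You actually have all of these ingredients on the table---you invoke simplicity, you invoke the rank-$1$ $\bA$-lattice $\Lambda_\rho\on$---but you route the bound through the motive rank ($r^2$) rather than the lattice rank ($r$), and then try to compensate with an ad hoc rigidity argument on $N'$. That argument is not complete as written: once you reduce to $d\phi(\Pi_n)=0$, you correctly observe that $d\phi$ need not commute with $d[y]$, and your proposed transfer to the $\{h_i\}$-basis of $N'$ does not obviously force the matrix of a $\C_\infty[t,\sigma]$-endomorphism to come from $\bA$-multiplication (the twist-invariance reasoning in Lemma~\ref{L:Simple} rules out certain \emph{submodules}, not arbitrary $\sigma$-equivariant endomorphisms). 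Rather than patching this, just sharpen the dimension bound to $\leq 2$ using the period lattice as above and the rest of your outline goes through unchanged.
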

\begin{proof}
Recall that endomorphisms of $\hat{\rho}\on$ are $\F_q$-linear endomorphisms $\alpha$ of $\C_\infty^n$ such that $\alpha\hat{\rho}\on_a = \hat{\rho}\on_a\alpha$ for all $a\in \F_q[t]$.  Thus $\bA$ is certainly contained in $\End(\hat{\rho}\on)$.  On the other hand, the $t$-module $\hat{\rho}\on$ and the $\bA$-module $\rho\on$ both have the same exponential function $\Exp_\rho\on$ and same period lattice $\Lambda_\rho\on$ (given in Proposition \ref{P:ResidueReview}(b)) associated to them.  We note, however, that whereas $\Lambda_\rho\on$ is a rank 1 $\bA$-module, when viewed as an $\F_q[t]$-module it is rank 2.  If we let $\End^0(\hat{\rho}\on) = \End(\hat{\rho}\on) \otimes_{\F_q[t]} \F_q(t)$ as an $\F_q(t)$-vector space, then \cite[Prop. 2.4.3]{BP02} implies that $[\End^0(\hat{\rho}\on):\F_q(t)]\leq 2$.  Since $\bA \subset \End(\hat{\rho}\on)$ is a rank 2 $\F_q[t]$-module, we see that $[\End^0(\hat{\rho}\on):\F_q(t)]= 2$, and thus $\End(\hat{\rho}\on)$ is a rank 2 $\F_q[t]$-module containing $\bA$.  Further, $\bA \otimes _{\F_q[t]} \F_q(t) = K$, and thus $\End^0(\hat{\rho}\on) = K$ as an $\F_q(t)$-vector space.  Since $\End(\hat{\rho}\on)$ is finitely generated over $\bA$, it is also integrally closed over $\bA$ and thus $\End(\hat{\rho}\on) = \bA$.

\end{proof}

\begin{proof}[Proof of Theorem \ref{T:Transcendence}]
This proof follows nearly identically to the proof of \cite[Prop. 4.1]{Yu97}.  First, assume by way of contradiction that
\[\dim_{\overline{K}} \Span_{\overline{K}}\{\zeta_\rho(b;1),\dots,\zeta_\rho(b;m),1,\pi_\rho,\dots,\pi_\rho^{m-1} \} < 2m,\]
so that there is a $\overline K$-linear relation among the $\zeta_\rho(b;i)$ and $\pi_\rho^j$ for $1\leq i\leq m$ and $0\leq j\leq m-1$.  Then, let $G_L$ be the 1-dimensional trivial $t$-module and set
\[G = G_L \times \left (\prod_{i=1}^m \hat{\rho}^{\otimes i}\right ) \times \left (\prod_{j=1}^{m-1} \hat{\rho}^{\otimes j}\right ).\]
For $1\leq i\leq m$ set $\bz_i = (*,\dots,*,C\zeta_\rho(b;i))^\top \in \C_\infty^i$ to be the vector from Theorem \ref{T:ZetaValues} such that $\Exp_\rho^{\otimes i}(\bz_i)\in H^i$, where $H$ is the Hilbert class field of $K$.  For $1\leq j\leq m-1$, let $\Pi_j \in \C_\infty^j$ be a fundamental period of $\Exp_\rho^{\otimes j}$ such that the bottom coordinate of $\Pi_j$ is an $H$ multiple of $\pi_\rho^j$ as described in Proposition \ref{P:ResidueReview}(c).  Define the vector
\[\bu = 1 \times \left (\prod_{i=1}^m \bz_i \right ) \times \left (\prod_{j=1}^{m-1} \Pi_j\right ) \in G(\C_\infty),\]
and note $\Exp_G(\bu) \in G(H)$, where $\Exp_G$ is the exponential function on $G$.  Our assumption that there is a $\overline K$-linear relation among the $\zeta_\rho(b;i)$ and $\pi_\rho^j$ implies that $\bu$ is contained in a $d\left [\F_q[t]\right ]$-invariant hyperplane of $G(\C_\infty)$ defined over $\overline K$.  This allow us to apply \cite[Thm. 3.3]{Yu97}, which says that $\bu$ lies in the tangent space to the origin of a proper $t$-submodule $H\subset G$.  Then, Lemmas \ref{L:Simple} and \ref{L:Endomorphisms} together with \cite[Thm 1.3]{Yu97} imply that there exists a linear relation of the form $a \zeta_\rho(b;j) + b \pi_\rho^j = 0$ for some $a,b \in H$ and $1\leq j\leq m-1$.  Since $\zeta_\rho(b;j)\in K_\infty$ and since $H\subset K_\infty$, this implies that $\pi_\rho^j \in K_\infty$.  However, we see from the product expansion for $\pi_\rho$ in \cite[Thm. 4.6]{GP16} that $\pi_\rho^j \in K_\infty$ if and only if $q-1|j$, which cannot happen because $j\leq m-1<q-1$.  This provides a contradiction, and proves the theorem.
\end{proof}

\begin{corollary}\label{C:Transcendence}
For $1\leq i\leq q-1$, the quantities $\zeta_\rho(b;i)$ are transcendental.  Further, the ratio $\zeta_\rho(b;i)/\pi_\rho^j \in \overline K$ for $0\leq j\leq q-1$ if and only if $i=j=q-1$.
\end{corollary}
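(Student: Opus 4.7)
The plan is to deduce both assertions from Theorem \ref{T:Transcendence}, with a single appeal to Goss's theorem \cite[Thm. 2.10]{Goss80} to cover the boundary case $j=q-1$. First, to establish transcendence of $\zeta_\rho(b;i)$ for $1 \leq i \leq q-1$, I would apply Theorem \ref{T:Transcendence} with $m = i$: the conclusion that the $2i$ elements $\{\zeta_\rho(b;1),\dots,\zeta_\rho(b;i),1,\pi_\rho,\dots,\pi_\rho^{i-1}\}$ are $\overline K$-linearly independent forces $\zeta_\rho(b;i) \notin \overline K \cdot 1$, hence $\zeta_\rho(b;i)$ is transcendental over $K$.

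Next, for the ratio $\zeta_\rho(b;i)/\pi_\rho^j$, I would split according to whether $j = q-1$. Assume first $1 \leq i \leq q-1$ and $0 \leq j \leq q-2$, and set $m = \max(i, j+1)$, which satisfies $m \leq q-1$. Theorem \ref{T:Transcendence} then places both $\zeta_\rho(b;i)$ and $\pi_\rho^j$ (with the convention $\pi_\rho^0 = 1$) inside a $\overline K$-linearly independent set of size $2m$, so these two elements themselves are $\overline K$-linearly independent, and hence $\zeta_\rho(b;i)/\pi_\rho^j \notin \overline K$. This handles every pair with $j < q-1$, in particular the $i = q-1$, $j < q-1$ cases.

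The residual case $j = q-1$ is the main obstacle, because Theorem \ref{T:Transcendence} with $m = q-1$ only reaches $\pi_\rho^{q-2}$ in its spanning set and cannot by itself control $\pi_\rho^{q-1}$. Here I would invoke Goss's theorem \cite[Thm. 2.10]{Goss80}, or equivalently the Drinfeld module analogue of the Carlitz--Bernoulli identity, which yields $\zeta_\rho(b;q-1)/\pi_\rho^{q-1} \in H \subseteq \overline K$; this supplies the ``if'' direction when $i = j = q-1$. For the ``only if'' direction with $1 \leq i < q-1$ and $j = q-1$, I would argue by contradiction: if $\zeta_\rho(b;i)/\pi_\rho^{q-1}$ were in $\overline K$, then combining with Goss's relation $\zeta_\rho(b;q-1) \in \overline K \cdot \pi_\rho^{q-1}$ would make $\zeta_\rho(b;i)$ and $\zeta_\rho(b;q-1)$ $\overline K$-linearly dependent, contradicting Theorem \ref{T:Transcendence} applied with $m = q-1$. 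The delicate point is that exactly the one pair $(q-1,q-1)$ escapes the main theorem and must be rescued by an external structural input, but once Goss's theorem is cited the argument closes cleanly.
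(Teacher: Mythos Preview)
Your proposal is correct and follows the same approach as the paper: deduce everything from Theorem~\ref{T:Transcendence} together with Goss's result \cite[Thm.~2.10]{Goss80} for the single case $i=j=q-1$. In fact your argument is more careful than the paper's own short proof: the paper simply asserts that $\zeta_\rho(b;i)/\pi_\rho^j \notin \overline K$ for $(i,j)\neq(q-1,q-1)$ follows from Theorem~\ref{T:Transcendence}, whereas you correctly observe that the case $j=q-1$, $i<q-1$ is not covered directly (since $\pi_\rho^{q-1}$ does not appear in the spanning set) and close it by combining Goss's relation with the linear independence of $\zeta_\rho(b;i)$ and $\zeta_\rho(b;q-1)$.
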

\begin{proof}
The transcendence of $\zeta_\rho(b;i)$, as well as the statement that $\zeta_\rho(b;i)/\pi_\rho^j \notin \overline K$ for $i,j\neq q-1$ follows directly from Theorem \ref{T:Transcendence}.  On the other had, if $i=j=q-1$, then \cite[Thm. 2.10]{Goss80} guarantees that $\zeta_\rho(b;i)/\pi_\rho^j \in \overline K$.
\end{proof}

\begin{remark}
We comment that the statement in Corollary \ref{C:Transcendence} that $\zeta_\rho(b;i)$ for $i=1$ is transcendental can be recovered from Anderson's theorem on log-algebraicity from \cite[Thm. 5.1.1]{And94} together with Yu's analytic subspace theorem \cite{Yu97}.
\end{remark}

\section{Examples}\label{S:Examples}
\begin{example}\label{E:Carlitzexp}
In the case of tensor powers of the Carlitz module (see \cite{PLogAlg} for a detailed account on tensor powers of the Carlitz module), the formulas in Theorems \ref{T:ExpCoef} and \ref{T:LogCoef} for the coefficients of $\Exp_C\on$ and $\Log_C\on$ can be worked out completely explicitly using hyper-derivatives.  For instance, we find that $g_i = (t-\theta)^{i-1}$ and that the shtuka function is $f=(t-\theta)$, so the left hand side of \eqref{gammadef} is
\[\gamma_{\ell,i} = \frac{1}{(t-\theta)^{n-\ell}(t-\theta^q)^n\dots(t-\theta^{q^i})^n}.\]
We can expand $\gamma_{\ell,i}$ in terms of powers of $(t-\theta)$ by using hyper-derivatives, as described in \cite[\S 2.3]{PLogAlg}, namely
\[\gamma_{\ell,i} = \sum_{j=0}^\infty \partial_t^j(\gamma_{\ell,i})\bigg|_{t=\theta}\cdot (t-\theta)^j.\]
Using this we recover the coefficients of $\Exp_C\on$ as given in formula (4.3.2) and Proposition 4.3.6(b) from \cite{PLogAlg}.  The formulas for coefficients of the logarithm given in (4.3.4) and Proposition 4.3.6(a) from \cite{PLogAlg} can be derived similarly using Theorem \ref{T:LogCoef}.
\end{example}

\begin{example}\label{E:easyexample}
Let $E:y^2 = t^3-t-1$ be defined over $\F_3$, and note that $A = \F_q[t,y]$ has class number 1.  Then from \cite{Thakur93} we find that
\[f = \frac{y-\eta - \eta(t-\theta)}{t - \theta - 1}.\]
The Drinfeld module $\rho$ associated to the coordinate ring of $E$ is detailed in Example 9.1 in \cite{GP16}.  Further, the 2-dimensional Anderson $\bA$-module $\rho^{\otimes 2}$ is discussed in Example 7.1 of \cite{GreenPeriods17}, where formulas are given for the functions $g_i$ and $h_i$ from Proposition \ref{P:review}(a).
%
We calculate that the function $\cG$ from \eqref{Gdef} is $\cG =(\eta + y)/(\theta -t) -y$
and that for $b=1$ we can express $(-1)^2f^2\tcG_b = (f\cG)^2$ in the form given in \eqref{fcFcoefficients} as
\[(f\cG)^2 = \frac{-\eta^3}{\eta^2+1}h_1 + h_2 + \frac{\eta^{5/3}}{\eta^{2/3} + 1}h_1\twistinv f^2 + h_2\twistinv f^2 + \frac{-\eta^{5/9} + \eta^{1/3}}{\eta^{2/9} + 1}h_1\twistk{-2}(ff\twistinv)^2 + h_1\twistk{-2}(ff\twistinv)^2.\]
This allows us to write the formulas in Theorem \ref{T:ZetaValues} as
\[
\left (\begin{matrix}
1\\ \frac{-\eta^3}{\eta^2 + 1}
\end{matrix}\right )
+
\left (\begin{matrix}
1\\ \frac{\eta^5}{\eta^2 + 1}
\end{matrix}\right )
+
\left (\begin{matrix}
1\\ \frac{-\eta^5 + \eta^3}{\eta^2 + 1}
\end{matrix}\right )
=
\left (\begin{matrix}
0\\ 0
\end{matrix}\right )
=
\Exp_\rho\on\left (\begin{matrix}
*\\
-\frac{\eta^{3}}{\eta^{2} + 1}\zeta(2)
\end{matrix}\right ).
\]
Thus the special vector $\bz = (*,-\eta^{3}/(\eta^{2} + 1)\zeta(2))^\top$ is in the period lattice for $\Exp_\rho\on$ which by \cite[Thm. 6.7]{GreenPeriods17} implies that the bottom coordinate of $\bz$ is a $K$-multiple of $\pi_\rho^2$, the fundamental period associated to $\rho$.  Hence $\zeta(2)/\pi_\rho^2 \in K$ as implied by Goss's \cite[Thm. 2.10]{Goss80}.
\end{example}

\begin{example}
Now let $q=4$ and let $E/\F_q$ be defined by $y^2 + y = t^3 + c$,  where $c\in \F_4$ is a root of the polynomial $c^2 + c + 1 = 0$.  Then we know from \cite[\S 2.3]{Thakur93} that $A=\F_q[\theta,\eta]$ has class number 1, that $V = (\theta,\eta+1)$ and that
\[f = \frac{y + \eta + \theta^4(t + \theta)}{t + \theta}.\]
Setting the dimension $n=2$ and the parameter $b=1$, from \eqref{Gdef} we find that
\[\cG = \frac{\eta + y + 1}{\theta + t} + \frac{y^4 + y + 1}{t^4 + t}\]
and that $\tcG_1 = \cG^2$.  Then we compute the expansion from \eqref{fcFcoefficients} as
\begin{align*}
f^2\tcG_1 &= (\theta^4+\theta)^{-1}h_1 + h_2 + (\theta^4+\theta)^{1/4}h_1\twistinv f^2 + (\theta^4+\theta)^{1/2}h_2\twistinv f^2 + (\theta^4+\theta)^{3/16}h_1\twistk{-2}(ff\twistinv)^2\\
&+ (\theta^4+\theta)^{1/4}h_2\twistk{-2}(ff\twistinv)^2+ (\theta^4+\theta)^{-1/64}h_1\twistk{-3}(ff\twistinv f\twistk{-2})^2+ h_2\twistk{-3}(ff\twistinv f\twistk{-2})^2,
\end{align*}
whereupon Theorem \ref{T:ZetaValues} gives
\begin{align*}
\left (\begin{matrix}
1\\ (\theta^4+\theta)^{-1}
\end{matrix}\right )
+
\left (\begin{matrix}
(\theta^4+\theta)^{2}\\ (\theta^4+\theta)
\end{matrix}\right )
+
\left (\begin{matrix}
(\theta^4+\theta)^{4}\\ (\theta^4+\theta)^{3}
\end{matrix}\right )
+
\left (\begin{matrix}
1\\ (\theta^4+\theta)^{-1}
\end{matrix}\right )
&=
\left (\begin{matrix}
(\theta^4+\theta)^{2} + (\theta^4+\theta)^{4}\\ (\theta^4+\theta) + (\theta^4+\theta)^{3}
\end{matrix}\right )\\
&=
\Exp_\rho\on\left (\begin{matrix}
*\\
(\theta^4 + \theta)\inv\zeta(2)
\end{matrix}\right ).
\end{align*}
\end{example}

\end{document}